    \numberwithin{equation}{section}
    \newtheorem{dfn}{Definition}[section]
    \newtheorem{thm}{Theorem}[section]
   \newtheorem{lem}{Lemma}[section]
   \newtheorem{rmk}{Remark}[section]
   \newtheorem{prop}{Proposition}[section]
\begin{document}
\begin{center} 
{\Large \bf A macro-micro elasticity-diffusion system modeling absorption-induced swelling in rubber foams - Proof of the  strong solvability}

\vskip 12pt
Toyohiko Aiki \\
Department of Mathematical and Physical Sciences, Japan Women's University \\
2-8-1 Mejirodai, Bunkyo-ku, Tokyo~112-8681, Japan, \\
\& Karlstad University, Sweden \\
{aikit@fc.jwu.ac.jp}

\vskip 12pt
Nils Hendrik Kr\"oger \\
Deutsches Institut f\"ur Kautschuktechnologie e.\,V. \\(German Institute of Rubber Technology e.\,V.)\\
Eupener Stra\ss e 33, 30519 Hannover, Germany \\
\& material prediction GmbH, Nordkamp 24, 26203 Wardenburg, Germany\\
nils.kroeger@dikautschuk.de, n.kroeger@materialprediction.de

\vskip 12pt
Adrian Muntean \\ 
 Department of Mathematics and Computer Science, Karlstad University\\
 Universitetsgatan 2, 65188, Karlstad, Sweden \\
adrian.muntean@kau.se

\end{center}

{\bf Abstract. } 
In this article,  we propose a macro-micro (two-scale) mathematical model for describing the macroscopic swelling of a rubber foam caused by the microscopic absorption of some liquid. 
In our modeling approach, we suppose that the material occupies a one-dimensional domain which swells as described by the standard beam equation including an additional term determined by the liquid pressure. As special feature of our model,  the absorption takes place inside the rubber foam via a lower length scale, which is assumed to be inherently present in such a structured material. The liquid's absorption and transport inside the material is modeled by means of a nonlinear parabolic equation derived from Darcy's law posed in a non-cylindrical domain defined by the macroscopic deformation (which is a solution of the beam equation). 

Under suitable assumptions,  we establish the existence and uniqueness of a suitable class of solutions to our evolution system  coupling the nonlinear parabolic equation posed on the microscopic non-cylindrical domain with the beam equation posed on the macroscopic cylindrical domain. In order to guarantee the regularity of the non-cylindrical domain, we impose a singularity to the elastic response function appearing in the beam equation.

\section{Modeling background and aim of this paper}

Polymeric materials play a pivotal role in many technological applications. Being hyperelastic or respectively highly viscoelastic, e.g. in case of carbon or silica filled rubbers, those materials are used for damping, coupling or sealing. In order to reduce closing forces or for acoustic reasons instead of full matrix materials foams are used. In their applications rubbers and their foams are often exposed to different kind of chemicals such that storage and transport phenomena through reactive polymeric matrices have both received considerable attention by theorists and experimentalists; see e.g. \cite{Giese, Chester, Guo, Nagdi}. From the theory side, it turns out that, due to the complex internal structure of the involved materials, standard macroscopic laws (like Fickian diffusion) do not hold anymore. Also, it is not clear cut how to predict the mechanical response of the material if its internal microstructures are undergoing significant changes for instance due to microscopic ingress of aggressive chemicals. Such an example is the case of the durability of rubber components put in contact to a large time exposure to ions attack (e.g.\,chlorides from a marine environment).  From the experimental side, the lack of reliable rheological models has as direct consequence that the needed constitutive laws have to be built in the laboratory for each material separately. This way the designing of materials with functional properties becomes possible, but it is yet unclear how the designed materials will respond e.g.\,to environmental conditions (far away from what happened in the laboratory) \cite{blowing}. This context offers us an adequate playground to develop well-posed multiscale models that are able to approximate the macroscopic response of internal (microscopic) changes.

Liquid uptake of polymers is often linked to swelling. The swelling process propagates jointly with diffusion and causes local stresses, which, in return, change locally the propagation of diffusion. Hence, swelling occurs when the mechanics of the material couples with mass transport and {\em viceversa }\cite{Neff_ZAMM}.   In this framework, we develop a new macro-micro evolution system able to describe the simultaneous microscopic absorption of a liquid  that is ultimately responsible for creating a macroscopic swelling of the material. Such model will need to be confronted at a later stage against experimental evidence. 
As starting picture, we assume that the material (rubber foam) is completely soaked in some liquid occupying in a container $\Omega  \subset {\mathbb R}^d$.  
 To keep things simple, we neglect  for now any capillary phenomena; mind that capillarity contributes essentially to the transport of liquid inside the polymer matrix cf.\,e.g.\,\cite{Hewitt}.
  
The rubber foam is a porous material. We imagine it here that it has a clear dual-porosity structure, i.e.\,there exist a finite number of large spherical pores $\Omega_i \subset \Omega$, $i = 1, 2, \cdots, N$ (see Figure \ref{fig1}), as well as many much smaller pores which are taken as inactive with respect to absorption at least at the time scale we are considering our setting. These very small pores are contributing to the set  
 $\Omega_0 = \Omega \setminus \overline{ \cup_{i=1}^N \Omega_i}$, where $\Omega_0$ indicates the region occupied by the rubber foam\footnote{{Alternatively, both $\Omega$ and $\Omega_0$ could be seen as rubber foams with very different local porosities. In such a case, the porosity corresponding to $\Omega$ is significantly smaller than the one for $\Omega_0$; this is some sort of situation with a "high contrast" in porosities.  In this paper, we stick however with the first interpretation of the meaning of $\Omega$ and $\Omega_0$.}}. 
 
 Benefitting from the dual porosity structure of our material, we consider the competition between the mechanics of the material and the transport of mass as taken place from the perspective of two well-separated space scales, i.e. one {\em macro} and one {\em micro}\footnote{Notice that in the vast majority of  modeling approaches, the fight between diffusion and mechanics takes place at the same observable (macroscopic) scale; see, for instance,  \cite{Chester} and references cited therein.}. The reader is kindly referred to \cite{Ralph} for more information on this modeling philosophy with distributed microstructures. To derive our macro-micro elasticity-diffusion model, we need a number of additional assumptions concerning the involved geometry:


\begin{figure}[H]
\begin{minipage}{0.5\hsize}
\centering
\includegraphics[width=0.95\textwidth]{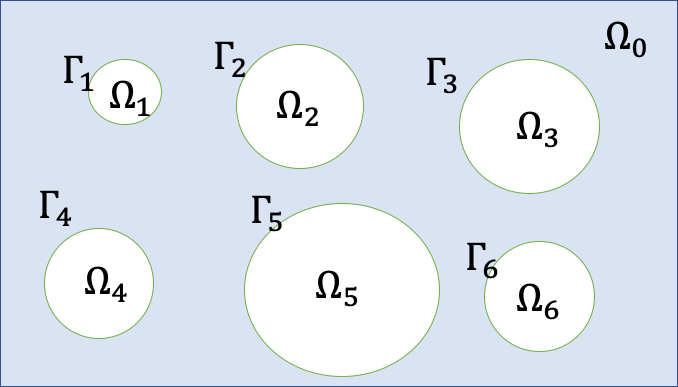}
\caption{Schematic representation of the investigated rubber foam $\Omega$ consisting of a compact foam matrix $\Omega_0$ and of the large pores $\Omega_i$ with connecting boundaries $\Gamma_i$.}
\label{fig1}
\end{minipage}
\quad
\begin{minipage}{0.5\hsize}
\centering
\includegraphics[width=0.85\textwidth]{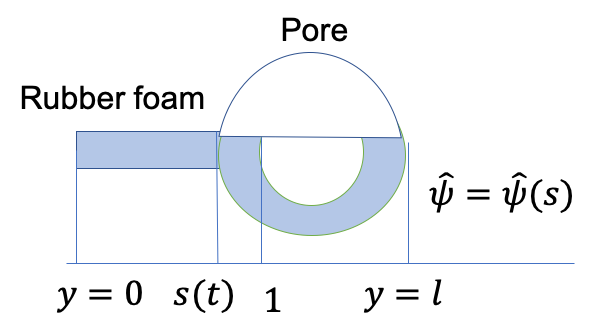}
\caption{Sketch of the micro model responsible with the liquid transport; liquid accumulation takes place in the large pores.}
\label{fig3}
\end{minipage}
\end{figure}

\begin{itemize}
\item $\Omega_i$ depends on time $t$, namely, we write $\Omega_i := \Omega_i(t)$ for $i = 0, 1, 2, \cdots, N$. 
\item The deformation of $\Omega_0$ is represented by the deformation $u$. 
To fix ideas, we consider our material to be viscoelastic; the deformation $u$ satisfies the kinetic equation with viscoelasticity. Also, the main variable in our model is the liquid density $w$ in  $\Omega_0$. Since our material is porous, we can write $w = \rho(p)$ by using  the liquid pressure $p$, where  $\rho$ is a continuous and increasing function on ${\mathbb R}$.
\item  In $\Omega_i$ (for each $i$) the liquid  only accumulates, it never diffuses out. 
\item The change of volume of  the liquid inside $\Omega_i$ determines swelling. Accordingly, the boundary $\Gamma_i (t):= \partial\Omega_i(t)$ is determined by the {stress $\sigma$ or strain $\varepsilon$}   on $\Gamma_i$ and  by volume of the liquid in $\Omega_i$. 
\end{itemize}

In addition to these physical assumptions, for mathematical simplicity we suppose that all involved spacial domains (i.e. $\Omega_i$ and $\Omega_0$) are one-dimensional intervals. Furthermore,  we deal with only the case that one pore (micro) is connected at one side to the rubber foam (macro), see Figure \ref{fig3}.

Firstly, we characterize our rubber foam as a material having good viscoelastic properties; see e.g. \cite{Nagdi}.  Hence, the stress $\sigma$ is given by a modified viscoelastic approach, that is
$$\sigma = f_0( \varepsilon) + k_v \partial_t \varepsilon,$$ where $\varepsilon$ is the strain, $f_0$ is a function on $\mathbb R$ corresponding to the 
{elastic response function},  
and $k_v$ is the effective viscoelastic constant.   Also, since we assumed the existence of a contribution of the liquid pressure to the stress,  we impose
that the total stress $\sigma_{\mbox{total}}$ depends on the liquid  pressure $\hat{p}$ as follows: 
$$ \sigma_{\mbox{total}} = \sigma + \nu(\hat{p}), $$ 
where $\nu$ is a continuous function on  $\mathbb R$. Moreover, we assume that the natural length of the rubber foam is $1$, and 
denote  by $u = u(t,x)$ the position of $x \in (0,1)$ at time $t$ (see Figure \ref{fig2}).   Accordingly, we have $\varepsilon(t,x) = \partial_x u(t,x) - 1$. 
We note that the case $\varepsilon = -1$ means that different points in the material  overlap. 
Since the mass conservation of the liquid is posed on the non-cylindrical domain determined by $u$, as mentioned later,  we need  the existence of the inverse mapping of 
$u(t)$ for each $t$. Here, we propose {an elastic response function} having singularity at $\varepsilon = -1$ as  follows: 
$$ f_0(\varepsilon) = \frac{k}{2}\left( \varepsilon + \frac{1}{2} - \frac{1}{2(1 + \varepsilon)^3}\right), $$ 
where $k$ is the effective elastic constant. From the definition of $f_0$, we see that the magnitude of the stress tends to infinity when the strain goes to $-1$ (see Figure \ref{fig_stress}).  Deviating in our approach from classical linear visco-elasticity, we consider that this assumption is natural from the physical point of view as a huge stress must activate to avoid the overlapping of the different points. 
Actually, in this article we succeed to get uniform estimates for $\varepsilon$ from below (see Lemma \ref{key_lemma}), namely, {$\varepsilon$ is never $-1$ } in our model.  
This kind of stress function {elastic response function} was already treated in Aiki-Kosugi \cite{Aiki-Kosugi} to handle the dynamics of elastic materials by means of large systems of coupled ordinary differential equations.

\begin{figure}[H]
\begin{minipage}{0.5\hsize}
\centering
\includegraphics[width=0.9\textwidth]{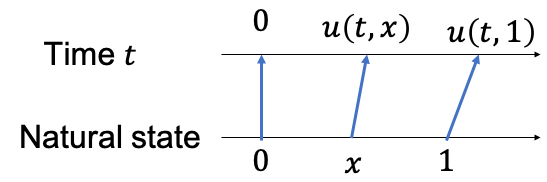}
\caption{Sketch of the 1d domain involved in the micro model.}
\label{fig2}
\end{minipage}
\quad
\begin{minipage}{0.5\hsize}
\centering
\includegraphics[width=0.75\textwidth]{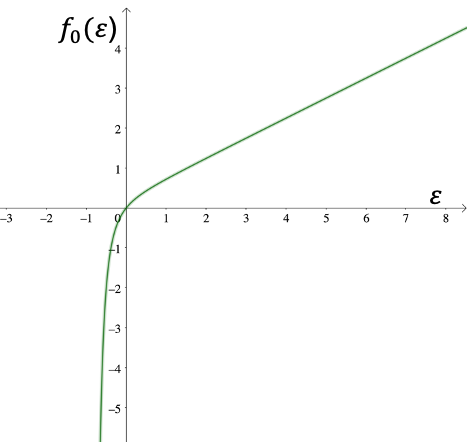}
\caption{Modified elastic response function with $k = 1$. }
\label{fig_stress}
\end{minipage}
\end{figure}

Moreover, we adapt the following beam equation as the kinetic equation for the rubber foam: 
\begin{equation}
m (\partial_{t})^2 u  + \gamma (\partial_x)^4 u - \partial_x \sigma -  \partial_x \nu (\hat{p}) = 0 \mbox{ in } Q(T) := (0,T) \times (0,1),  \nonumber 
\end{equation} 
where $m >0$ is the density of the foam and $\gamma$ is a  positive constant. This type of partial differential equations was already investigated as a mathematical model for thin elastic materials, compare, for instance, the monograph by  Brokate and Sprekels \cite{Bro-Sp}.  
{Concerning}  the boundary condition for $u$ at $x = 0$,  we impose that one edge $x = 0$ is fixed, namely, $u(t,0) = 0$. It is clear that the length  $s(t)$ of the rubber foam  at time $t$ is given by $u(t,1)$, i.e. $s(t) = u(t,1)$ for $t \in [0,T]$.

Next, we consider the mass conservation law for the liquid present inside the rubber foam  and occupying the region $(0, s(t))$ for each $t \in (0, T)$. We denote the density and the liquid pressure in the porous material, respectively, by $w = w(t, y)$ and $p = p(t, y)$ for $(t, y)  \in  Q(s, T) := \{(t,y)  : 0< t < T, 0 < y < s(t)\}$. Here, we note that 
$w = \rho(p)$ as mentioned above and $\hat{p}(t, x) = p(t, u(t, x))$ for $(t, x)  \in Q(T)$, namely,  $\hat{p}(t, u^{-1}(t, y)) = p(t, y)$ for $(t, y) \in Q(s, T)$, where $u^{-1}(t,y)$ denotes the inverse of $y =u(t, x)$ for each $t \in [0,T]$. Thus, we remark that the inverse of the mapping $y = u(t, x)$ is necessary for analysis to this problem.  By Darcy's law,  the flow is given by $ -\kappa \partial_y p$, where $\kappa$ is the permeability which is a positive constant. Also, since there exists advection in this problem, the mass conservation law is written by
$$  \partial_t \rho(p) + \partial_y(\hat{v} \rho(p)) - \kappa (\partial_y)^2 p = 0 \mbox{ in } Q(s, T), $$
where the velocity $\hat{v}(t, y) := v(t,  u^{-1}(t, y))$ for $(t, y) \in Q(s, T)$ and  $v(t,x) = \partial_t u(t, x)$ for $(t, x) \in Q(T)$.\\

We give a remark concerned with domains of some variables appearing in the model.  We consider the kinetic equation (resp. mass conservation) on the cylindrical domain $Q(T)$  (resp. the non-cylindrical domain $Q(s, T)$) and $x \in (0,1)$  (resp. $y \in (0, s(t))$) is used as the argument of the functions in this article.

At $y = 0$  we impose the following non-homogenous Neumann boundary condition  for $p$ in order to represent the change of the liquid volume:
$$  \kappa \partial_y p(t,0) = h_0(t) \quad \mbox{ for } 0 < t < T, $$
where $h_0$ is a given function on $[0,T]$. 

In order to describe liquid accumulation in the pore  $(s(t), l)$, where $l$ is a given constant, we suppose that the liquid mass $\hat{\psi} $ in the pore is determined by only $s$, that is, we can  write $\hat{\psi} = \hat{\psi}(s)$ (see Figure \ref{fig3}). For a timespan of size $\Delta t  > 0$, we consider the change of the liquid mass within $(s(t+\Delta t), s(t))$ from  $t$ to $t +\Delta t$.
This yields 
$$ (\mbox{change of the liquid mass})  = (\mbox{flux of pressure}) + (\mbox{flow by motion of the material}). $$ 
In that case,  we have  
\begin{align*}
& \hat{\psi}(s(t+\Delta)) - \hat{\psi}(s(t)) - (s(t) - s(t + \Delta t)) w(t,s(t))  \\
= &  -\kappa \partial_y p(t, s) \Delta t  + \hat{v}(t, s(t+\Delta t)) w(t, s(t+ \Delta)) \Delta t. 
\end{align*} 
Therefore, thanks to $s'(t) = \hat{v}(t, s(t))$ by letting $\Delta t \to 0$, we get
\begin{equation}
 s'(t)  \psi(s(t))  +\kappa   \partial_y p(t, s(t)) = 0. \label{FBC}
 \end{equation}
We confirm that  the conservation law \eqref{fbp_mass} of the liquid mass in the whole domain holds under  \eqref{FBC}, when $h_0 \equiv 0$:  
\begin{equation}
\frac{d}{dt} \left( \int_0^{s(t)} w(t, y)dy + \hat{\psi}(s(t)) \right) =0 \quad \mbox{ for } t \in (0,T).  \label{fbp_mass}
\end{equation}
Indeed,  we observe  that 
\begin{align*}
& \frac{d}{dt} \int_0^{s(t)} w(t, y)dy   \\
=  & s'(t) w(t, s(t)) + \int_0^{s(t)} \partial_t w(t, y) dy \\
=  &s'(t) w(t, s(t)) + \int_0^{s(t)} ( \kappa (\partial_y)^2 p(t,y) - \partial_y (\hat{v} w) )dy  \\
 = & s'(t) w(t, s(t)) + \kappa (\partial_y p(t, s(t))  - \partial_y p(t, 0))   -  \hat{v}(t, s(t)) w(t, s(t)) + \hat{v}(t, 0)w(t,0)  \\
 = &  \kappa \partial_y p(t, s(t)) \quad \mbox{ for } t \in (0,T),  
\end{align*}
since $s'(t) = \partial_t u(t,1) = \hat{v}(t, s(t))$ and $\hat{v}(t,0) = \partial_t u(t,0) = 0$.  
Hence, the liquid mass is conserved.

Moreover, at $x = s(t)$ the internal {stress} $\sigma_{\mbox{total}}$  should balance  the force  caused by swelling. We assume that the force 
depends only on $s$, namely, the force is presented by $\varphi(s)$,  where $\varphi$ is a continuous function on ${\mathbb R}$. 
Thus,  we obtain the following boundary condition: 
$$ - \gamma (\partial_x)^3 u(t,1) + \sigma(t,1) + \varphi( s(t) ) + \nu(\hat{p}) (t, s(t))  = 0   \quad \mbox{ for } t \in [0,T]. $$

Summarizing the above discussion, we obtain the following system:  Find the functions functions 
$u$ and $p$ satisfying the following model equations:
\begin{align}
& m u_{tt}  + \gamma u_{xxxx} -  ( f(u_x) + k_v u_{xt})_x - \nu (\hat{p})_x = 0 \mbox{ in } Q(T),  \label{EQ1}  \\
& \mbox{ where } f(u_x) = \frac{k}{2}( u_x - \frac{1}{2} - \frac{1}{2(u_x)^3}),   \nonumber \\
& u(t, 0) = 0,  u(t,1) = s(t), u_{xx}(t,0) = u_{xx}(t,1) = 0 \mbox{ for } t \in [0,T], \label{BC1-1}  \\
&  - \gamma u_{xxx}(t,1)+  f(u_x)(t,1) + k_v u_{xt}(t,1) + \nu(\hat{p})(t, 1) + \varphi( s(t)) =0 \mbox{ for } t \in [0,T], \label{BC1-2} \\
&  \rho(p)_t + (\hat{v}  \rho(p))_y - \kappa  p_{yy} = 0  \mbox{ in } Q(s, T),  \label{EQ2}\\
& \kappa  p_y(t,0) = h_0(t), \kappa  p_y(t, s(t)) =  - s'(t) \psi(s(t))    \mbox{ for } 0 < t < T, \label{BC2} \\
& u(t,0) = u_0(x), u_t(0, x) = v_0(x)   \mbox{ for } x \in (0,1), \label{IC1} \\
& p(0,y) = p_0(y) \mbox{ for } y \in (0,s(0)), \label{IC2} 
\end{align}
where $\hat{p}(t, x)  = p(t, s(t)x)$ for  $(t,x) \in Q(T)$, 
$u_0$ and $p_0$ are initial functions, and $\psi = V'$. 

We refer to (\ref{EQ1})--(\ref{IC2}) as problem (P). Understanding the behavior of solutions to this problem becomes now our main target.

\section{Concept of solutions to problem (P) and an uniqueness result}

In this section, we introduce a suitable concept of solutions to problem (P).  For simplicity, we denote the following function spaces as follows: 
$$ H = L^2(0,1), X = \{z \in W^{2,2}(0,1) : z(0) = 0\}, V = H^{4}(0,1) \cap X. $$
Also, we denote the usual inner products of $H$ and $V$ by $(\cdot, \cdot)_H$ and $(\cdot, \cdot)_V$, respectively.  

\begin{dfn} \label{def_solution}  \rm
Let $u$ be a function on $Q(T)$ and $s(t) = u(t,1)$ for $t \in [0,T]$. Also, let $p$ be a function on $Q(s, T)$. 
We call that the pair $(u, p)$  a solution to (P) on $[0, T]$, if the following conditions (S1)-(S4) hold: 

\begin{itemize}
\item[(S1)]  $u \in W^{2,\infty}(0,T; H) \cap W^{1,\infty}(0,T; X) \cap W^{2,2}(0,T; H^1(0,1)) \cap L^{\infty}(0, T;  V) =: S_1(T)$. 
\item[(S2)] $u_x > 0$ on $\overline{Q(T)}$. 
\item[(S3)] $p_{yy}, p_{t} \in L^2(Q(s, T))$, $|p(\cdot)|_{H^1(0,s(\cdot))} \in L^{\infty}(0,T)$. 
\item[(S4)]  \eqref{EQ1} - \eqref{IC2} hold in the usual sense. 
\end{itemize}

\end{dfn}

\begin{rmk} \label{Remark1}  \rm
Obviously, if $u$ satisfies (S1)  and (S2), then $u_x \in C(\overline{Q(T))}$ and   $u^{-1}(t, \cdot)$ exists for each $t \in [0, T]$ and is continuous on $C(\overline{Q(T))}$. 
Furthermore, we can take positive constants $\delta$ and $M$  such that $\delta  \leq u_x \leq M$ on $\overline{Q(T)}$.  
This shows that $\delta \leq s(t) \leq M$ for $t \in [0, T]$. Thus,  as they appear in \eqref{EQ2}, the objects $Q(s, T)$ and $\hat{v}$ are well-defined. 
\end{rmk} 

Next, we list assumption for given data. 
\begin{itemize}
\item[(A1)]$\rho \in C^1({\mathbb R})$ be a Lipschitz continuous and  increasing function on $\mathbb{R}$ with having Lipschitz continuous inverse $\beta = \rho^{-1}$. Immediately, we see that $\mu := \inf\{\rho'(r) : r \in {\mathbb R}\} > 0$.  Moreover, we suppose that 
$\beta'$  is also Lipschitz continuous.  For simplicity, we put
$C_{\rho} =  |\rho'|_{L^{\infty}({\mathbb R})} + |\beta'|_{W^{1,\infty}({\mathbb R})}$. 
\item[(A2)] $\varphi, \psi \in C^1({\mathbb R}) \cap W^{1,\infty}({\mathbb R})$. We denote their primitives by 
$\hat{\varphi}$ and $\hat{\psi}$, respectively, and put $C_{\varphi} = |\varphi|_{W^{1,\infty}({\mathbb R})}$ and  $C_{\psi} = |\psi|_{W^{1,\infty}({\mathbb R})}$.
\item[(A3)] $\nu$ is a Lipschitz continuous and bounded function on ${\mathbb R}$, and we put $C_{\nu} = |\nu|_{W^{1,\infty}({\mathbb R})}$.  
\end{itemize}

\vskip 12pt
The main result is concerned with existence and uniqueness of a solution to (P). 

\begin{thm} \label{main_th}
 If (A1) - (A3) hold, $u_0 \in V$ with $u_{0x} > 0$ on $[0,1]$,  {$u_{0xx}(0) = u_{0xx}(1) = 0$}, $v_0 \in X$, 
$p_0 \in W^{1,2}(0,s_0)$ and { $- \gamma u_{0xxx} (1) + f(u_{0x}(1)) + k_v v_{0x}(0) + \nu( p_0(s_0)) +\varphi(s_0) = 0$, 
}
where $s_0 = u_0(1)$, then the problem (P) has a unique solution  on $[0, T]$. 
\end{thm}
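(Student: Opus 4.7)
The strategy is to decouple the system into a parabolic subproblem for $p$ (given $u$) and a beam subproblem for $u$ (given $\hat p$), to run a fixed-point iteration on a short time interval, and then to extend to the prescribed $[0,T]$ via uniform a priori estimates. Fix a constant $\delta\in (0,\min_{[0,1]}u_{0x})$ and let $K_T$ be the closed subset of $S_1(T)$ consisting of those $u$ that satisfy $u_x\ge\delta$ on $\overline{Q(T)}$, match the initial data $u_0,v_0$, and lie in a ball of $S_1(T)$ of appropriate radius. For $u\in K_T$ I would first solve \eqref{EQ2}--\eqref{BC2}, \eqref{IC2} with $s(t)=u(t,1)$ and $\hat v(t,y)=u_t(t,u^{-1}(t,y))$ to obtain $p$, and then solve the beam problem \eqref{EQ1}, \eqref{BC1-1}--\eqref{BC1-2}, \eqref{IC1} with $\hat p$ as a given source term to produce $\Lambda u$. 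Any fixed point of $\Lambda$ is a solution of (P) in the sense of Definition~\ref{def_solution}.

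For the parabolic subproblem, Remark~\ref{Remark1} gives $\delta\le s(t)\le M$ and a well-defined $u^{-1}(t,\cdot)$ whenever $u\in K_T$. To avoid the moving boundary I would pull back via $y=s(t)\xi$, set $P(t,\xi)=p(t,s(t)\xi)$, and rewrite \eqref{EQ2}--\eqref{BC2} as a uniformly parabolic quasilinear equation on the fixed cylinder $(0,T)\times(0,1)$ whose coefficients are bounded and Lipschitz in terms of $s$, $s'$, $\hat v$. Under (A1) a standard linearization–iteration argument (freeze $\rho(P)_t=\rho'(P)P_t$, solve the linear problem, iterate) produces a unique $P$ with $P_t,P_{\xi\xi}\in L^2$ and $P\in L^\infty(0,T;H^1)$; translating back yields the regularity required by (S3). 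The $L^\infty(H^1)$ bound is obtained by testing with $p_t$ and with $-p_{yy}$ while absorbing the boundary terms from \eqref{BC2} by means of (A2) and Young's inequality.

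For the beam subproblem, with $\nu(\hat p)$ regarded as a given source, \eqref{EQ1} is a fourth-order evolution equation whose viscoelastic dissipation $k_v u_{xxt}$ gives it parabolic character; the initial data are made compatible with the dynamic condition \eqref{BC1-2} at $t=0$ by the hypothesis of the theorem. I would construct a solution by Galerkin approximation in $V$, first truncating $f$ far from the singularity and then removing the truncation. Testing with $u_t$ yields the basic energy identity, controlling $u_t$ in $L^\infty(H)$, $u_{xt}$ in $L^2(H)$, and the convex functional $\int_0^1\Phi(u_x)\,dx$ with $\Phi'=f$; parabolic regularity of the leading part $k_v u_{xxt}-\gamma u_{xxxx}$ then upgrades $u$ to $L^\infty(V)$ and $u_{tt}$ to $L^\infty(H)$. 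Because $\Phi(r)\to+\infty$ as $r\downarrow 0$ by the very construction of $f_0$, the uniform energy bound forces a strictly positive lower bound on $u_x$ depending only on the initial energy and on $L^\infty$-norms of $\hat p$ and $\varphi(s)$, which are controlled by (A2)--(A3). This is the substance of Lemma~\ref{key_lemma}, and it is the central technical step: it keeps the iteration inside $K_T$ and prevents $f$ from becoming singular.

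With these estimates $\Lambda$ maps $K_T$ into itself for sufficiently small $T$ and a suitable radius, and $\Lambda(K_T)$ is relatively compact in $C([0,T];X)$ by an Aubin--Lions argument, so Schauder's theorem yields existence on a short interval; because the a priori bounds are uniform in $T$, a standard continuation argument extends the solution to the prescribed $T$. For uniqueness, take two solutions $(u_i,p_i)$, pull both parabolic equations back to the fixed reference domain $(0,1)$, and write an energy inequality for $(u_1-u_2,P_1-P_2)$: the monotonicity of $f$ on $(0,\infty)$ contributes a nonnegative term, the advection, transport and boundary contributions are absorbed using the Lipschitz bounds (A1)--(A3), and the dependence of the pulled-back coefficients on $s_i$ is Lipschitz on $K_T$; Gronwall then forces $u_1=u_2$ and $p_1=p_2$. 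The main obstacle is the feedback loop between the singular elasticity and the moving parabolic domain, both coupled through $u_x$; most of the technical work goes into closing the estimate of Lemma~\ref{key_lemma} before the fixed-point machinery is invoked.
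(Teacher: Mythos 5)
Your overall strategy coincides with the paper's: decouple (P) into the beam subproblem (P1) and the pulled-back parabolic subproblem $(\overline{\mbox{P2}})$, use Lemma~\ref{key_lemma} to keep $u_x$ bounded away from zero, and close with a fixed-point argument. Two structural differences are worth noting. The paper iterates on $\hat p$, not on $u$: for $\hat p\in S(T)$ it first solves (P1) to get $u=\Gamma^{(1)}\hat p$, then $(\overline{\mbox{P2}})$ to get $\Gamma\hat p$. And it applies Schauder on $[0,T]$ directly, in the \emph{weak} topology of $S_2(T)$, with no short-time restriction and no continuation step; this is possible precisely because the boundedness of $\nu$ and $\varphi$ makes the $S_1(T)$-estimate on $\Gamma^{(1)}\hat p$ (Lemma~\ref{lem7-1}) independent of $\hat p$, so the whole iteration is trapped in a fixed bounded, weakly compact set on the full interval at once. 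Your short-time plus continuation plus Aubin--Lions variant is viable, but it is not simpler.

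Two concrete weak points in your sketch. (i) In the uniqueness estimate one tests with $u_{1t}-u_{2t}$, so the contribution of the nonlinearity is $(f(u_{1x})-f(u_{2x}),u_{xt})_H$ with $u=u_1-u_2$; monotonicity of $f$ gives this term \emph{no} sign, since the pairing is against $u_{xt}$ rather than $u_x$. The paper instead splits $f$ into its linear part $\tfrac{k}{2}u_x$, which produces $\tfrac{k}{4}\tfrac{d}{dt}|u_x|_H^2$, and the singular remainder, which is absorbed as a Lipschitz perturbation using the pointwise bounds $\delta\le u_{ix}\le M$ coming from Definition~\ref{def_solution} and \eqref{4a0}; the Gronwall constant then genuinely depends on the solutions. (ii) The upgrade from the Galerkin weak solution to a strong solution in $S_1(T)$ is not a routine ``parabolic regularity'' bootstrap: the nonlinear dynamic boundary condition \eqref{BC1-2} at $x=1$ and the $H^4$-in-space target require the compatibility hypothesis and a linear regularity theory matched to that boundary structure. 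The paper reads the nonlinear terms off the weak solution, plugs them into a \emph{linear} problem (LP1) whose $S_1(T)$-regularity is quoted from Brokate--Sprekels, and then identifies the two solutions by a \emph{dual-equation} uniqueness argument for weak solutions (Proposition~\ref{uni_weak_P1}) --- a step that is necessary precisely because weak solutions are not regular enough to be used as test functions for one another. Your truncate-and-pass-to-the-limit plan would have to confront both issues explicitly, and as written it does not.
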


\vskip 12pt
In Theorem \ref{main_th} by imposing  boundedness for the functions $\nu$ and $\varphi$  we can obtain the uniform estimate for $u$ and then prove the existence of solutions. Here, we show a way to get the estimates, briefly. Let $u$ be a function satisfying \eqref{EQ1}. 
By multiplying \eqref{EQ1} with  $u_t$ and integrating it we have the following inequality: 
\begin{align*}
 & \frac{m}{2} |u_t(t)|_H^2 +  \frac{\gamma}{2} |u_{xx}(t)|_H^2 +  \frac{k}{2} |u_{x}(t)|_H^2 
+ \int_ 0^1 \frac{k}{8 |u_x(t)|^2} dx + \hat{\varphi} (u(t,1)) + k_v \int_0^t |u_{tx}(\tau)|_H^2 d\tau \\ 
 \leq &  \frac{m}{2} |v_0|_H^2 +  \frac{\gamma}{2} |u_{0xx}|_H^2 +  \frac{k}{2} |u_{0x}|_H^2 
+  \int_ 0^1 \frac{k}{ 8|u_{0x}|^2 } dx + \int_0^1 u_x(t) dx + \hat{\varphi}(u_0(1)) - \int_0^1 u_{0x} dx   \\
&  - \int_0^t \int_0^1 \nu(\hat{p})(\tau)  u_{tx}(\tau) dx d\tau
      \mbox{ for } t \in [0,T]. 
 \end{align*}
Thanks to the boundedness of $\nu$ and $\varphi$, the uniform estimates of $u$ is easily obtained. We note that the estimate is independent of $\hat{p}$. 
Moreover, according to Lemma \ref{key_lemma} in this paper and the assumption $u_{0x} > 0$ on $[0,1]$, it holds that  
$u_x > 0$ in  $\overline{Q(T)}$. Based on these estimates, we can prove the theorem.\\

In this paper, we define the stress function {elastic response function} by {a specific form. } We  comment in Remark \ref{rem_gen} on eventual generalizations of $f$.\\

For proving Theorem \ref{main_th}, we shall solve two distinct problems, viz.  
\begin{itemize}
\item[(P1)] ($\hat{p}, u_0,  v_0) := \{\eqref{EQ1}, \eqref{BC1-1},\eqref{BC1-2},  \eqref{IC1}\}$, and
\item[(P2)] ($\hat{v}, s, h_0, p_0) := \{\eqref{EQ2}, \eqref{BC2}, \eqref{IC2} \}$,
\end{itemize}
for given $\hat{p}$ and $\hat{v}$, respectively. \\

Furthermore, in order to deal with (P2)($\hat{v}, s, h_0, p_0$), we introduce the following notation and change of variable: 
\begin{equation}
\bar{p}(t, x) := p(t, s(t)x) \quad \mbox{ for } (t, x) \in Q(T).  
\end{equation}
By using this definition we have the following problem ($\overline{\mbox{P2}})(\bar{v}, s, h_0, \bar{p}_0$): 
\begin{align}
&  \rho(\bar{p})_t  -  \frac{\kappa}{s^2}  \bar{p}_{xx} = - \frac{1}{s}  (\bar{v}  \rho(\bar{p}))_x  + \frac{s'}{s} x \rho(\bar{p})_x  \mbox{ in } Q(T),  \label{EQ2'}\\
& \frac{\kappa}{s(t)}   \bar{p}_x(t,0) = h_0(t), \frac{\kappa}{s(t)}  \bar{p}_x(t, 1) =  - s'(t) \psi(s(t))    \mbox{ for } 0 < t < T, \label{BC2'} \\
& \bar{p}(0,y) = \bar{p}_0(x) \mbox{ for }  x \in (0,1), \label{IC2'} 
\end{align} 
where $\bar{p}_0(x) = p_0(s_0 x)$ for $x \in (0,1)$.\\

This paper is organized as follows: 
In Section \ref{aux_lemmas} we give some useful inequalities as lemmas.  In Section \ref{unique} we prove uniqueness of solutions to (P). 
Since we can regard (P1) and (P2) as special cases of (P), the uniqueness for (P) implies uniqueness of solutions for (P1) and (P2), immediately. 
Next, for given $\hat{p}$ we solve  (P1)  in Section \ref{P1}  by applying the Galerkin method. Also, by standard techniques for nonlinear parabolic equations we can show solvability  of (P2) in Section \ref{P2}. Finally, at the end of the paper, we establish the existence of solutions to (P).

\section{Auxilliary lemmas} \label{aux_lemmas} 
Our proofs rely on a couple of key inequalities that we like to present in this section. 
Firstly, let us recall the Gagliardo-Nirenberg interpolation inequality. 
\begin{lem} \label{lemma1}
For any $z \in H^1(0,1)$ the following inequalities holds:  
\begin{align*}
& |z|_{L^{\infty}(0,1)}  \leq 2 (|z_x|_H^{1/2} |z|_H^{1/2} + |z|_H) \leq 3( |z_x|_H +  |z|_H), \\
 & |z|_{L^{\infty}(0,1)}  \leq 2 |z_x|_H^{1/2} |z|_H^{1/2}, \mbox{ if } z(0) = 0. 
 \end{align*}
\end{lem}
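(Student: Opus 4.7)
The approach is to establish both inequalities via classical one-dimensional calculus identities applied to $z^{2}$, tracking the constants carefully and exploiting the fact that the interval has unit length so that $\int_{0}^{1} z(b)^{2}\,db = |z|_{H}^{2}$ exactly.

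For the second inequality (the case $z(0)=0$), I would start from the identity
\[
z(x)^{2} = \int_{0}^{x} 2\, z(y)\, z_{y}(y)\, dy,
\]
which is just the fundamental theorem of calculus applied to $z^{2}$ together with the vanishing at the left endpoint. Cauchy--Schwarz on the right gives $z(x)^{2} \le 2\,|z|_{H}\,|z_{x}|_{H}$, and taking square roots yields even $|z|_{L^\infty(0,1)} \le \sqrt{2}\,|z_{x}|_{H}^{1/2}|z|_{H}^{1/2}$, which is stronger than the stated bound with constant $2$.

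For the first (general) inequality, the $z(0)$ term must be replaced by an average. Fix arbitrary $a,b\in[0,1]$ and write
\[
z(a)^{2} = z(b)^{2} + \int_{b}^{a} 2\, z(y)\, z_{y}(y)\, dy \;\le\; z(b)^{2} + 2\,|z|_{H}\,|z_{x}|_{H}
\]
by Cauchy--Schwarz. Integrating over $b\in(0,1)$ and then taking the supremum over $a$ produces
\[
|z|_{L^\infty(0,1)}^{2} \;\le\; |z|_{H}^{2} + 2\,|z|_{H}\,|z_{x}|_{H}.
\]
The first line of the lemma then follows from the subadditivity $\sqrt{A+B}\le \sqrt{A}+\sqrt{B}$, giving $|z|_{L^\infty} \le |z|_{H} + \sqrt{2}\,|z_{x}|_{H}^{1/2}|z|_{H}^{1/2} \le 2(|z_{x}|_{H}^{1/2}|z|_{H}^{1/2} + |z|_{H})$. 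The second chain $2(|z_{x}|_{H}^{1/2}|z|_{H}^{1/2} + |z|_{H}) \le 3(|z_{x}|_{H} + |z|_{H})$ is a direct application of Young's inequality $|z_{x}|_{H}^{1/2}|z|_{H}^{1/2}\le \tfrac{1}{2}(|z_{x}|_{H}+|z|_{H})$.

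Honestly there is no real obstacle here: this is a standard Gagliardo--Nirenberg estimate in one spatial dimension, and the proof is essentially bookkeeping. The only care needed is matching the constants ($2$ and $3$) exactly as stated, which is why the argument repeatedly uses that the underlying interval has length $1$.
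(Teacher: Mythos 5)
Your proof is correct, and all the constants match the statement. The paper itself states Lemma~\ref{lemma1} as a recalled Gagliardo--Nirenberg fact without supplying a proof, so there is no argument to compare against; your derivation via the fundamental-theorem-of-calculus identity for $z^2$, Cauchy--Schwarz, averaging over the base point $b$, and Young's inequality is the standard elementary route and fills in the omitted details cleanly.
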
 

\begin{lem} \label{key_lemma}
 Let $z \in X$,  and  $r_1$ and $r_2$ be any positive constants. 
If 
\begin{equation}  
\int_0^1 \frac{1}{|z_x|^2} dx \leq r_1, |z|_X \leq r_2,  \label{assump_key}
\end{equation}
then it holds 
$$ |z_x| \geq \frac{r_2}{\sqrt{2}} e^{- r_1 r_2^2} \mbox{ on } [0,1]. $$
\end{lem}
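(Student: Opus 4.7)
The plan is to estimate the minimum of $|z_x|$ on $[0,1]$ from below by combining two competing effects: the integral condition $\int_0^1 |z_x|^{-2}\, dx \leq r_1$ forbids $|z_x|$ from being uniformly small, while $|z|_X \leq r_2$ (which yields $|z_{xx}|_H \leq r_2$) controls the modulus of continuity of $z_x$ and thus bounds how quickly a local minimum can ``spread out''.

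First, since $z \in W^{2,2}(0,1) \hookrightarrow C^1([0,1])$, the derivative $z_x$ is continuous; it is even $\tfrac{1}{2}$-H\"older continuous because $z_x \in W^{1,2}(0,1)$. Consequently, if $z_x$ vanished at some point $x_*$, then $|z_x(x)| \leq |z_{xx}|_H\sqrt{|x-x_*|}$ near $x_*$, which would give $|z_x|^{-2} \gtrsim |x-x_*|^{-1}$ and violate the first part of \eqref{assump_key}. Hence $z_x$ has a constant sign, and the continuous quantity $|z_x|$ attains a strictly positive minimum $m := \min_{[0,1]}|z_x|$ at some $x_0 \in [0,1]$. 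Proving the lemma thus reduces to showing $m \geq \frac{r_2}{\sqrt{2}}e^{-r_1 r_2^2}$.

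Next, I would combine the fundamental theorem of calculus with the Cauchy-Schwarz inequality to get
\begin{equation*}
\bigl||z_x(x)| - m\bigr| \leq \left|\int_{x_0}^x z_{xx}(y)\,dy\right| \leq |z_{xx}|_H \sqrt{|x-x_0|} \leq r_2 \sqrt{|x-x_0|}
\end{equation*}
for every $x \in [0,1]$, so that $|z_x(x)| \leq m + r_2\sqrt{|x-x_0|}$. Substituting this pointwise upper bound into the hypothesis and restricting the integral to whichever of the intervals $[0,x_0]$ or $[x_0,1]$ has length at least $1/2$, I would change variables $t = \sqrt{|x-x_0|}$ to reduce matters to the elementary integral $\int_0^{1/\sqrt{2}} 2t(m+r_2 t)^{-2}\,dt$, whose antiderivative is $\frac{2}{r_2^2}\bigl(\log(m+r_2 t) + m/(m+r_2 t)\bigr)$. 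Evaluating and rearranging yields a transcendental inequality of the form $\log\bigl(1 + \tfrac{r_2}{\sqrt{2}\,m}\bigr) \leq \tfrac{1}{2} r_1 r_2^2 + O(1)$, and solving for $m$ produces the desired exponential lower bound.

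The main obstacle is that last step: matching the precise constants (numerator $r_2/\sqrt{2}$ and exponent $-r_1 r_2^2$) requires carefully tracking the $O(1)$ terms generated by evaluating the antiderivative, since the coarse bound $\log(1+A) \geq \log A$ is responsible for the factor $r_2/\sqrt{2}$, while the residual term $m/(m+r_2\sqrt{|x-x_0|})-1$ contributes an additional constant that must be absorbed into the exponent. Everything else in the argument is routine once the H\"older estimate on $z_x$ and the non-vanishing of $z_x$ are in hand.
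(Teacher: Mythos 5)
Your overall strategy mirrors the paper's: use $|z_{xx}|_H \leq r_2$ to obtain the $\tfrac12$-H\"older bound $|z_x(x) - z_x(x_0)| \leq r_2\sqrt{|x-x_0|}$, deduce a pointwise upper bound on $|z_x|$, substitute into the integral constraint, and integrate over whichever half of $[0,1]$ adjoins $x_0$. Your preliminary remark that $z_x$ cannot vanish (else the integral diverges) is correct and clarifying, though it is not strictly needed — the paper simply bounds $|z_x(x_0)|$ at an arbitrary $x_0$, treating the two cases $x_0 \geq \tfrac12$ and $x_0 \leq \tfrac12$ separately, which is equivalent to your "restrict to the longer half" step.

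The gap is in the constant-matching, and it is more serious than your proposal acknowledges. From $|z_x(x)| \leq m + r_2\sqrt{|x-x_0|}$ and your antiderivative $\tfrac{2}{r_2^2}\bigl(\log(m+r_2 t)+\tfrac{m}{m+r_2 t}\bigr)$, the coarse bounds you propose ($\log(1+A)\geq\log A$ and $\tfrac{A}{1+A}\leq 1$, with $A = r_2/(\sqrt 2\,m)$) yield
\[
r_1 \;\geq\; \frac{2}{r_2^2}\Bigl(\log\frac{r_2}{\sqrt 2\,m}-1\Bigr),
\qquad\text{hence}\qquad
m \;\geq\; \frac{r_2}{\sqrt 2}\,e^{-r_1 r_2^2/2-1}.
\]
This implies the lemma's bound $m\geq\tfrac{r_2}{\sqrt 2}e^{-r_1 r_2^2}$ only when $r_1 r_2^2\geq 2$; but Cauchy--Schwarz only forces $r_1 r_2^2\geq 1$, and configurations with $r_1 r_2^2<2$ are attainable (for instance $z(x)=x$ gives $r_1 r_2^2 = 4/3$ with the usual $W^{2,2}$ norm, and there your estimate $\approx 0.15$ falls short of the required $\approx 0.22$). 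So the residual term cannot simply be "absorbed into the exponent." Closing your route exactly would require the auxiliary inequality $\log(1+A)-\tfrac{A}{1+A}\geq\tfrac12\log A$ for all $A>0$, which is in fact true but is itself non-trivial (it is nearly tight at $A=\sqrt 2+1$) and is not established in the proposal. The paper sidesteps all of this by using $(a+b)^2\leq 2(a^2+b^2)$ to get $|z_x(x)|^2\leq 2\bigl(m^2+r_2^2|x-x_0|\bigr)$, so the reciprocal has a denominator linear in $|x-x_0|$ and the integral is a pure logarithm with no residual rational term — the stated constants then fall out exactly. I suggest adopting that pointwise bound instead of $(m+r_2\sqrt{|x-x_0|})^{-2}$.
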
 

{
\begin{proof}
Assume \eqref{assump_key}. We have
$$ |z_x(x) - z_x(x')| \leq |z_{xx}|_H |x - x'|^{1/2} \leq r_2 |x - x'|^{1/2} \mbox{ for } x, x' \in [0,1]. $$ 
For $x_0 \in [\frac{1}{2}, 1]$ 
 we observe that  
\begin{align*}
\int_0^1 \frac{1}{|z_x(x)|^2} dx & \geq \frac{1}{2} \int_0^{x_0} \frac{1}{|z_x(x) - z_x(x_0)|^2 + |z_x(x_0)|^2} dx    \\
& \geq \frac{1}{2} \int_0^{x_0} \frac{1}{r_2 (x_0 - x) + |z_x(x_0)|^2} dx \\
& \geq \frac{1}{2K_2^2} \log \left( \frac{r_2^2 x_0 + |z_x(x_0)|^2}{|z_x(x_0)|^2} \right) \\
& \geq \frac{1}{2K_2^2} \log \left( \frac{r_2^2 }{2 |z_x(x_0)|^2} \right).  
\end{align*}
Accordingly, we get 
$$  |z_x(x_0)| \geq \frac{r_2}{\sqrt{2}} e^{- r_1 r_2^2} \mbox{ for }  x_0 \in [\frac{1}{2}, 1]. $$
Similarly, we can show that  this inequality holds  for $\displaystyle x_0 \in [0, \frac{1}{2}]$. 
\end{proof}
}

{As mentioned before, generalizations for the stress function elastic response function $f$ are possible as discussed in the following remark. }
\begin{rmk} \label{rem_gen} \rm
Suppose that the stress function {elastic response function} $f$ is defined  on $(0, \infty)$ and its primitive $\hat{f}$  satisfies 
\begin{equation}
\hat{f}(r) \geq \frac{c}{ r^q} \mbox{ for } r > 0,   
\label{cond_f}
\end{equation}
where $c$ and $q$ are positive constants. 
As easily seen, if $q \geq 2$,   then a similar estimate as in Lemma \ref{key_lemma} holds. 
Thus, since it is possible to generalize this lemma,  we may prove existence and uniqueness for $f$ satisfying \eqref{cond_f}. 
\end{rmk} 

\section{Uniqueness} \label{unique}
The aim of this section is to prove the uniqueness of solutions to (P). Throughout this section we suppose that all assumptions from the hypothesis of Theorem \ref{main_th} hold. Furthermore, we denote by $(u_1, p_1)$ and $(u_2, p_2)$ two sets of solutions to (P) defined on the same timespan $[0,T]$. 
Also,  we put 
\begin{align*} 
& u = u_1 - u_2 \mbox{ on } Q(T),  s_i(t) = u_{i}(t,0)  \mbox{ for }  t \in  [0,T] \mbox{  and  } i = 1, 2,  s = s_1 - s_2 \mbox{ on } [0,T],  \\ 
& \bar{p}_i(t, x ) = p_i(t, s_i(t)x),  \hat{p}_i(t, x ) = p_i(t, u_i(t,x)) \mbox{  for } (t, x) \in Q(T),  i = 1, 2,  \\
& \hat{p} = \hat{p}_1 - \hat{p}_2,  \bar{p} = \bar{p}_1 - \bar{p}_2  \mbox{ on } Q(T),  \\
& \sigma_i = f(u_{ix}) + k_v u_{tx}, i = 1, 2, \sigma = \sigma_1 -  \sigma_2 \mbox{ on } Q(T),  \\
& v_i (t,x)= u_{it}(t,x),  \bar{v}_i(t,x) = v_i(t, u_i^{-1}(t, s_i(t)x)) \mbox{ for } (t,x)  \in Q(T),  i = 1, 2, \\
&  v= v_1 -  v_2, \bar{v} = \bar{v}_1 - \bar{v}_2 \mbox{ on } Q(T),  \\
& w_i = \rho(p_i) \mbox{ on } Q(s_i, T), \bar{w}_i(t,x) = w_i(t, s_i(t)x), \mbox{  for } (t, x) \in Q(T), i= 1,2,  \\
& \bar{w} = \bar{w}_1 - \bar{w}_2 \mbox{  on }  Q(T). 
\end{align*}

By Definition \ref{def_solution} we can take $\delta > 0$, $M > 0$  and $R > 0$ such that for $i = 1, 2$, 
\begin{align}
 \left\{  \begin{array}{l}
 \delta  \leq u_{ix} \leq M \mbox{ on } \overline{Q(T)}, \\
 |u_{itx}|_H \leq R,  |\bar{v}_{i}|_{L^{\infty}(0,1)}  \leq R, |v_{ix}|_{L^{\infty}(0,1)}  \leq R, |\bar{v}_{ix}|_{L^{\infty}(0,1)}  \leq R \mbox{ on }  [0,T],  \\
  |\bar{w}_i|_H \leq R, |\bar{w}_i|_{L^{\infty}(0,1)} \leq R, |\beta(\bar{w}_i)_x|_H \leq R \mbox{ on }   [0,T]. 
 \end{array} \right. 
 \label{4a0}
\end{align}
The role of (\ref{4a0}) is to ensure that the following bounds hold:  
\begin{equation}
\left\{  \begin{array}{l}
|s_i(t)| \leq M,  |s_i'(t)| \leq R \mbox{ for } t \in [0,T], |u_i(t,x)| \leq M  \mbox{ for } (t,x) \in \overline{Q(T)} \mbox{ and } i = 1, 2,  \\
|s(t)| \leq |u_x(t)|_H, |s'(t)| \leq |u_{tx}(t)|_H \mbox{ for } t \in  [0,T]. 
\end{array}  \right.  \label{4a1}
\end{equation}
The following lemma gives an energy-like estimate for $u$: 
\begin{lem} \label{Uni_P1} 
There exists a positive constant $C_1$ depending only on $C_{\varphi}$, $C_{\nu}$, $k$, $k_v$, $\delta$ and $M$ satisfying \eqref{4a1} such that 
\begin{align}
& \frac{d}{dt} ( \frac{m}{2} |u_t(t)|_H^2 + \frac{\gamma}{2} |u_{xx}(t)|_H^2 + \frac{k}{4} |u_x(t)|_H^2) + \frac{k_v}{2}  |u_{tx}(t)|_H^2 \nonumber \\
 \leq & C_1(1 + |\bar{p}_{1x}(t)|_{L^{\infty}(0,1)}^2 ) |u_x(t)|_H^2  + C_1 |\bar{p}(t)|_H^2  \quad \mbox{ for } t \in [0,T].  \label{4-0} 
\end{align}
\end{lem}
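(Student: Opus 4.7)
The plan is to test the difference of \eqref{EQ1} for the two solutions against $u_t = u_{1t}-u_{2t}$ in $L^2(0,1)$, integrate by parts, and exploit \eqref{BC1-2} to cancel the boundary contributions at $x=1$. Since $u(t,0)=0$ and $u_{ixx}(t,0)=u_{ixx}(t,1)=0$, integration by parts gives $\gamma\int_0^1 u_{xxxx}u_t\,dx = \gamma u_{xxx}(t,1)s'(t)+\tfrac{\gamma}{2}\tfrac{d}{dt}|u_{xx}|_H^2$ with $s'(t)=u_t(t,1)$, while the differentiated stress and pressure terms produce the interior contributions $\int(f(u_{1x})-f(u_{2x}))u_{tx}\,dx$, $k_v|u_{tx}|_H^2$, and $\int(\nu(\hat{p}_1)-\nu(\hat{p}_2))u_{tx}\,dx$ plus boundary contributions at $x=1$ multiplied by $s'(t)$. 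Subtracting the two copies of \eqref{BC1-2} shows that, combined with $\gamma u_{xxx}(t,1)s'(t)$, all of these boundary contributions collapse to $[\varphi(s_1)-\varphi(s_2)]s'(t)$. Using $|s|\leq|u_x|_H$ and $|s'|\leq|u_{tx}|_H$ (from $u(t,0)=u_t(t,0)=0$ and Cauchy--Schwarz), together with the Lipschitz bound on $\varphi$, this remaining term is controlled by $C_\varphi|u_x|_H|u_{tx}|_H$.

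For the interior $f$ contribution, the explicit form of $f$ gives
\begin{equation*}
 f(u_{1x})-f(u_{2x}) = \tfrac{k}{2}u_x + \tfrac{k}{4}\Bigl(\tfrac{1}{u_{2x}^3}-\tfrac{1}{u_{1x}^3}\Bigr).
\end{equation*}
The linear part, paired with $u_{tx}$, produces exactly $\tfrac{k}{4}\tfrac{d}{dt}|u_x|_H^2$, which is the source of the $\tfrac{k}{4}|u_x|_H^2$ in the energy. The nonlinear remainder is pointwise bounded by $\tfrac{3kM^2}{4\delta^6}|u_x|$ thanks to \eqref{4a0}, so its integral against $u_{tx}$ is $\leq C|u_x|_H|u_{tx}|_H$ and is absorbed by Young's inequality.

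The delicate step is the $\nu$ term, since the target estimate requires $|\bar{p}_{1x}(t)|_{L^\infty}$ to appear only in front of $|u_x|_H^2$. Decompose
\begin{equation*}
 \hat{p}_1(t,x)-\hat{p}_2(t,x) = \bigl[p_1(t,u_1(t,x))-p_1(t,u_2(t,x))\bigr] + \bigl[p_1(t,u_2(t,x))-p_2(t,u_2(t,x))\bigr].
\end{equation*}
Using $p_{1y}(t,y)=s_1(t)^{-1}\bar{p}_{1x}(t,y/s_1(t))$, the first bracket is pointwise $\leq \delta^{-1}|\bar{p}_{1x}(t)|_{L^\infty}|u(t,x)|$, whose $L^2(0,1)$-norm is $\leq C|\bar{p}_{1x}(t)|_{L^\infty}|u_x(t)|_H$. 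For the second bracket, the change of variables $y=u_2(t,x)$ (with $dx=dy/u_{2x}$ and $u_{2x}\geq\delta$) reduces matters to estimating $p_1(t,y)-p_2(t,y)=[\bar{p}_1(t,y/s_1)-\bar{p}_1(t,y/s_2)]+\bar{p}(t,y/s_2)$ in $L^2(0,s_2(t))$; the mean-value bound $|\bar{p}_1(t,y/s_1)-\bar{p}_1(t,y/s_2)|\leq (M/\delta^2)|\bar{p}_{1x}|_{L^\infty}|s|$ together with the elementary substitution $\xi=y/s_2$ yields $|\hat{p}_1-\hat{p}_2|_H\leq C(|\bar{p}_{1x}(t)|_{L^\infty}|u_x(t)|_H+|\bar{p}(t)|_H)$. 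Combining with $|\nu(\hat{p}_1)-\nu(\hat{p}_2)|\leq C_\nu|\hat{p}_1-\hat{p}_2|$, Cauchy--Schwarz and Young's inequality give
\begin{equation*}
 \Bigl|\int_0^1(\nu(\hat{p}_1)-\nu(\hat{p}_2))u_{tx}\,dx\Bigr| \leq \tfrac{k_v}{8}|u_{tx}|_H^2 + C\bigl(1+|\bar{p}_{1x}(t)|_{L^\infty}^2\bigr)|u_x(t)|_H^2 + C|\bar{p}(t)|_H^2.
\end{equation*}

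Collecting all the contributions and absorbing the remaining $|u_{tx}|_H$ terms into $k_v|u_{tx}|_H^2$ by Young's inequality produces \eqref{4-0} with the claimed dependence of $C_1$ on $C_\varphi$, $C_\nu$, $k$, $k_v$, $\delta$, $M$. The main technical obstacle is exactly the $\nu$-term estimate: ensuring that the $|\bar{p}_{1x}|_{L^\infty}^2$ factor multiplies only $|u_x|_H^2$ and does not contaminate $|\bar{p}|_H^2$ forces precisely the two-step splitting above, which leverages both the $\hat{p}_i$ and $\bar{p}_i$ representations of the pressure.
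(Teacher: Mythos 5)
Your energy-estimate structure — multiplying the difference equation by $u_t$, integrating by parts, using \eqref{BC1-1}--\eqref{BC1-2} to collapse the boundary terms to the $\varphi$ contribution, splitting $f(u_{1x})-f(u_{2x})$ into the linear piece $\tfrac{k}{2}u_x$ plus the nonlinear remainder bounded via the algebraic identity for $u_{1x}^3-u_{2x}^3$, and absorbing everything with Young's inequality — is exactly the paper's route, and all of these parts are correct.

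The one place where you deviate, the pressure decomposition, has a genuine domain problem. You split
\[
\hat{p}_1(t,x)-\hat{p}_2(t,x) = \bigl[p_1(t,u_1(t,x))-p_1(t,u_2(t,x))\bigr] + \bigl[p_1(t,u_2(t,x))-p_2(t,u_2(t,x))\bigr],
\]
and later, after the change of variables $y=u_2(t,x)$, you invoke $\bar p_1(t,y/s_1(t))$ for $y \in (0,s_2(t))$. But $p_1(t,\cdot)$ lives on $[0,s_1(t)]$ and $\bar p_1(t,\cdot)$ on $[0,1]$; when $s_2(t)>s_1(t)$ (which nothing forbids for two a priori distinct solutions), $u_2(t,x)$ exceeds $s_1(t)$ near $x=1$ and $y/s_1(t)$ exceeds $1$, so both $p_1(t,u_2(t,x))$ and $\bar p_1(t,y/s_1(t))$ are evaluated outside their domains of definition, and the mean-value bound you then apply is not available there. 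The paper avoids exactly this obstruction by inserting the middle point $\bar p_1\bigl(t,\tfrac{u_2(t,x)}{s_2(t)}\bigr)$ rather than $p_1(t,u_2(t,x))=\bar p_1\bigl(t,\tfrac{u_2(t,x)}{s_1(t)}\bigr)$: since $u_i(t,x)/s_i(t)\in[0,1]$ for each $i$, all arguments of $\bar p_1$ stay inside $[0,1]$, the mean-value estimate applies, and the second difference becomes $\bar p(t,u_2/s_2)$, which the substitution $\xi=u_2(t,x)/s_2(t)$ turns cleanly into $|\bar p(t)|_H$. Your calculation would go through if you replaced your intermediate comparison point with the paper's $\bar p_1(t,u_2/s_2)$; as written, the step has a gap.
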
 
\begin{proof}
First, we have 
\begin{equation}
m u_{tt} + \gamma u_{xxxx} - \sigma_x  -( \nu(\hat{p}_1) - \nu(\hat{p}_2) )_x = 0 \mbox{ a.e. in } Q(T). \label{diff_EQ1} 
\end{equation} 
By multiplying \eqref{diff_EQ1} with $u_t$ and integrating it over $[0,1]$, we see that 
\begin{equation}
m (u_{tt}, u_t)_H  + \gamma (u_{xxxx},u_t)_H - (\sigma_x, u_t)_H  -( (\nu(\hat{p}_1) - \nu(\hat{p}_2) )_x, u_t)_H = 0 \mbox{ a.e. on } [0,T]. \label{4-a} 
\end{equation} 
For simplicity we denote its left hand side by $\sum_{i=1}^4 I_i(t)$ for a.e. $t \in [0,T]$.  Thanks to integration by parts and  \eqref{BC1-1}  
it is easy to see that 
\begin{equation} 
I_1(t) + I_2(t) =  \frac{d}{dt} (\frac{m}{2} |u_t(t)|_H^2  + \frac{\gamma}{2} |u_{xx}(t)|_H^2 )+ \gamma u_{xxx}(t,1) u_t(t,1) \mbox{ for a.e. } t \in [0,T], \label{4-b} 
\end{equation} 
\begin{align} 
I_3(t) + I_4(t)  = &  (\sigma(t), u_{tx}(t))_H +  (\nu(\hat{p}_1)(t) - \nu(\hat{p}_2)(t), u_{tx}(t))_H  \label{4-c}   \\
& - \sigma(t,1) u_t(t,1) -  (\nu(\hat{p}_1)(t,1) - \nu(\hat{p}_2)(t,1)) u_{t}(t,1)  \mbox{ for a.e. } t \in [0,T],  \nonumber 
\end{align} 
\begin{align} 
&  (\sigma(t), u_{tx}(t))_H  \nonumber \\
  = & \frac{k}{2} (u_x(t) - (\frac{1}{2u_{1x}(t)^3} - \frac{1}{2u_{2x}(t)^3}), u_{tx}(t))_H + k_v|u_{tx}(t)|_H^2 \nonumber \\
 =  & \frac{k}{4} \frac{d}{dt} |u_x(t)|_H^2 + \frac{k}{4} (\frac{u_{1x}(t)^3 - u_{2x}(t)^3}{ (u_{1x}(t) u_{2x}(t))^3}, u_{tx}(t))_H  + k_v|u_{tx}(t)|_H^2
 \mbox{ for a.e. } t \in [0,T].  \label{4-d} 
\end{align} 
Combining \eqref{4-a} $\sim$ \eqref{4-d} and \eqref{BC1-2},  it follows that 
\begin{align}
& \frac{d}{dt} ( \frac{m}{2} |u_t(t)|_H^2  + \frac{\gamma}{2} |u_{xx}(t)|_H^2 + \frac{k}{4}  |u_x(t)|_H^2 ) + k_v|u_{tx}(t)|_H^2 \\
= & - ( \varphi(s_1(t)) - \varphi(s_2(t))) u_t(t,1) -  \frac{k}{4} (\frac{u_{1x}(t)^3 - u_{2x}(t)^3}{ (u_{1x}(t) u_{2x}(t))^3}, u_{tx}(t))_H   \label{4-e} 
 \\
&  -  (\nu(\hat{p}_1)(t) - \nu(\hat{p}_2)(t), u_{tx}(t))_H  \nonumber \\
=:  & \sum_{i=1}^3 \hat{I}_i(t) \quad  \mbox{ for a.e. } t \in [0,T].  \nonumber 
\end{align} 
Here, we note that 
\begin{align}
|\hat{I}_1(t)| & \leq  C_{\varphi} |u_t(t,1)| |u(t,1)| \nonumber \\
     & \leq C_{\varphi}  |u_{tx}(t)|_H |u_x(t)|_H     \nonumber  \\
     & \leq \frac{k_v}{4} |u_{tx}(t)|_H^2 + \frac{C_{\varphi}^2}{k_v} |u_x(t)|_H^2 \quad  \mbox{ for a.e. } t \in [0,T],  \label{4-f}  \\
|\hat{I}_2(t)| & \leq  \frac{3kM^2}{\delta^6}  |u_x(t)|_H |u_{tx}(t)|_H \nonumber \\
        & \leq \frac{k_v}{8} |u_{tx}(t)|_H^2 + \frac{2 k^2 M^4}{k_v \delta^{12}} |u_x(t)|_H^2 \quad  \mbox{ for a.e. } t \in [0,T],  \label{4-g} \\
|\hat{I}_3(t)| & \leq   C_{\nu} |\hat{p} (t)|_H |u_{tx}(t)|_H \nonumber \\
        & \leq \frac{k_v}{8} |u_{tx}(t)|_H^2 + \frac{2 C_{\nu}^2}{k_v } |\hat{p}(t)|_H^2 \quad  \mbox{ for a.e. } t \in [0,T].  \label{4-h} 
\end{align}
By substituting \eqref{4-f} $\sim$ \eqref{4-h} into \eqref{4-e} we obtain 
\begin{align}
&  \frac{d}{dt} (\frac{m}{2} |u_t(t)|_H^2  + \frac{\gamma}{2} |u_{xx}(t)|_H^2 + \frac{k}{4}  |u_x(t)|_H^2 )+ \frac{k_v}{2} |u_{tx}(t)|_H^2 \nonumber \\
\leq & (\frac{C_{\varphi}^2}{k_v} + \frac{2 k^2 M^4}{k_v \delta^{12}})  |u_x(t)|_H^2 + \frac{2 C_{\nu}^2}{k_v } |\hat{p}(t)|_H^2 
\quad  \mbox{ for a.e. } t \in [0,T].  \label{4-i} 
\end{align} 

We can estimate $|\hat{p}(t)|_H^2$ by $|\bar{p}(t)|_H^2$. 
Indeed, by the definitions of $\hat{p}$ and $\bar{p}$ we infer that 
\begin{align} 
 |\hat{p}(t)|_H^2  
= &  \int_0^1 |\bar{p}_1(t, \frac{u_1(t,x)}{s_1(t)}) - \bar{p}_2(t, \frac{u_2(t,x)}{s_2(t)})|^2 dx \nonumber  \\
 \leq  &  2  \int_0^1 |\bar{p}_1(t, \frac{u_1(t,x)}{s_1(t)}) - \bar{p}_1(t, \frac{u_2(t,x)}{s_2(t)})|^2 dx 
  + 2  \int_0^1 |\bar{p}_1(t, \frac{u_2(t,x)}{s_2(t)}) - \bar{p}_2(t, \frac{u_2(t,x)}{s_2(t)})|^2 dx  \nonumber  \\
   \leq &   2 |\bar{p}_{1x}(t)|_{L^{\infty}(0,1)}^2  \int_0^1 | \frac{u_1(t,x)}{s_1(t)} - \frac{u_2(t,x)}{s_2(t)}|^2 dx 
  + 2  \int_0^1 |\bar{p}_1(t, \xi) - \bar{p}_2(t, \xi)|^2  \frac{s_2(t)}{u_{2x}(t,\xi)} d\xi \nonumber  \\\nonumber 
 \leq  &   \frac{4}{\delta^2} |\bar{p}_{1x}(t)|_{L^{\infty}(0,1)}^2  \int_0^1 ( |s(t)|^2 |u_1(t,x)|^2 + |s_2(t)|^2 |u(t,x)|^2) dx
   + \frac{2}{\delta}  \int_0^1 |s_2(t)| |\bar{p}(t, \xi)|^2  d\xi \nonumber  \\
    \leq  &   \frac{8M^2}{\delta^2} |\bar{p}_{1x}(t)|_{L^{\infty}(0,1)}^2   |u_x(t)|_H^2   
   + \frac{2M}{\delta}  |\bar{p}(t)|_H^2 \quad \mbox{  for a.e. }  t \in [0,T].  \nonumber 
 \end{align}
We note that   $\bar{p}_1(t, \frac{u_2(t,x)}{s_2(t)})$ is well-defined in the above calculations, since  $0 \leq \frac{u_2(t,x)}{s_2(t)} \leq 1$ for a.e. $(t, x) \in Q(T)$. Thus, there exists a positive constant $C_1$ satisfying \eqref{4-0}, where  $C_1$ depends only on 
$C_{\varphi}$, $C_{\nu}$, $k$, $k_v$, $\delta$ and $M$.   
\end{proof} 

The following lemma is a key  to get  estimates for the difference $\bar{w}$ . 
\begin{lem} \label{bar_v} 
There exists a positive constant $C_2$ depending only on $\delta$, $M$ and $R$ such that 
$$ |\bar{v}(t)|_H \leq C_2 (|u_x(t)|_H  + |u_t(t)|_H) \mbox{ for } t \in [0,T]. $$ 
\end{lem}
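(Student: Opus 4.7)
The plan is to reduce $\bar v(t,\cdot)=\bar v_1(t,\cdot)-\bar v_2(t,\cdot)$ to two elementary comparisons by inserting a judicious intermediate term. Set $\xi_i(t,x):=u_i^{-1}(t,s_i(t)x)$, so that $\bar v_i(t,x)=v_i(t,\xi_i(t,x))$ and $u_i(t,\xi_i(t,x))=s_i(t)x$ by construction. The bounds $\delta\le u_{ix}\le M$ in \eqref{4a0} ensure that each $\xi_i(t,\cdot):[0,1]\to[0,1]$ is a bijection and that all compositions appearing below are well defined.

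The first move is to split
$$\bar v_1(t,x)-\bar v_2(t,x)=\bigl[v_1(t,\xi_1(t,x))-v_1(t,\xi_2(t,x))\bigr]+\bigl[v_1(t,\xi_2(t,x))-v_2(t,\xi_2(t,x))\bigr]=:A(t,x)+B(t,x).$$
For $A$, I would apply the mean value theorem in the spatial variable of $v_1$ together with $|v_{1x}(t,\cdot)|_{L^\infty(0,1)}\le R$ from \eqref{4a0} to obtain the pointwise bound $|A(t,x)|\le R\,|\xi_1(t,x)-\xi_2(t,x)|$. To control $\xi_1-\xi_2$, I would use the defining identity to write
$$u_1(t,\xi_1(t,x))-u_1(t,\xi_2(t,x))=s(t)x-u(t,\xi_2(t,x)),$$
and then invoke the mean value theorem once more, together with the lower bound $u_{1x}\ge\delta$, to arrive at $|\xi_1-\xi_2|\le\delta^{-1}(|s(t)|+|u(t,\xi_2(t,x))|)$. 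Since $u(t,0)=0$, the fundamental theorem of calculus gives $|u(t,\cdot)|_{L^\infty(0,1)}\le|u_x(t)|_H$, which, combined with $|s(t)|\le|u_x(t)|_H$ from \eqref{4a1}, yields $|A(t)|_H\le 2R\delta^{-1}|u_x(t)|_H$.

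For $B$, I would change variables by $y=\xi_2(t,x)$, whose Jacobian $\xi_{2x}(t,x)=s_2(t)/u_{2x}(t,\xi_2(t,x))$ lies in $[\delta/M,M/\delta]$ by \eqref{4a0}--\eqref{4a1}. Since $\xi_2(t,\cdot)$ maps $[0,1]$ onto $[0,1]$, this yields
$$|B(t)|_H^2=\int_0^1|v(t,\xi_2(t,x))|^2\,dx=\int_0^1|v(t,y)|^2\,\frac{u_{2x}(t,y)}{s_2(t)}\,dy\le\frac{M}{\delta}\,|u_t(t)|_H^2.$$
Adding the two bounds produces the claim with a constant $C_2$ depending only on $\delta$, $M$ and $R$. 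I do not anticipate a real obstacle: everything reduces to the mean value theorem, the uniform bounds collected in \eqref{4a0}--\eqref{4a1}, and an elementary change of variables. The only point that requires care is verifying that $v_1(t,\xi_2(t,x))$ is legitimately defined, which follows from the surjectivity of $\xi_2(t,\cdot):[0,1]\to[0,1]$.
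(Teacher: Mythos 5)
Your proof is correct, and it tracks the paper's high-level plan (split $\bar v$ into $A+B$, mean value theorem on the first piece, change of variables on the second), but you handle the key sub-step — bounding $|\xi_1-\xi_2|$ — more directly than the paper does. The paper inserts the intermediate point $u_1^{-1}(t,s_2(t)x)$, which forces it to (i) extend $u_1^{-1}(t,\cdot)$ past $s_1(t)$ by a constant so that this point is even defined when $s_2(t)>s_1(t)$, and then (ii) run a two-case analysis ($s_2(t)x\le s_1(t)$ versus $s_2(t)x>s_1(t)$) to obtain the analogue of your inequality $\delta|\xi_1-\xi_2|\le|s(t)|+|u(t,\xi_2)|$. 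Your route sidesteps both the extension and the case split entirely: instead of inverting $u_1$ at the awkward argument $s_2(t)x$, you push the comparison into the forward map $u_1$ at the always-well-defined point $\xi_2(t,x)\in[0,1]$, and the identity $u_1(t,\xi_1)-u_1(t,\xi_2)=s(t)x-u(t,\xi_2)$ together with $u_{1x}\ge\delta$ does the rest in one stroke. The final steps ($|u(t,\cdot)|_{L^\infty(0,1)}\le|u_x(t)|_H$ from $u(t,0)=0$, $|s(t)|\le|u_x(t)|_H$ from \eqref{4a1}, $v=u_t$, and the Jacobian bound $\delta/M\le\xi_{2x}\le M/\delta$) match the paper. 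Net effect: same constants up to harmless factors, but a shorter and more transparent argument for the $A$ piece.
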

\begin{proof} 
By the definition of $\bar{v}$ and change of variable we see that 
\begin{align}
|\bar{v}(t)|_H^2  \leq & 2 \int_0^1 |v_1(t, u_1^{-1}(t, s_1(t)x)) - v_1(t, u_2^{-1}(t, s_2(t)x))|^2 dx \nonumber \\
    &     + 2 \int_0^1 |v_1(t, u_2^{-1}(t, s_2(t)x)) - v_2(t, u_2^{-1}(t, s_2(t)x))|^2  dx \nonumber \\
 \leq &   2  |v_{1x}(t)|^2_{L^{\infty}(0,1)} \int_0^1 |u_1^{-1}(t, s_1(t)x) - u_2^{-1}(t, s_2(t)x)|^2 dx \label{40-1} \\
    &     + 2 \int_0^1 |v_1(t, \xi) - v_2(t, \xi)|^2 \frac{u_{2x}(t, \xi)}{s_2(t)}  d\xi  (=: I_1(t) + I_2(t)) \mbox{ for } t \in [0,T].  \nonumber
\end{align}
Since we extend $u_1^{-1}(t, \cdot): [0, s_1(t)] \to [0,1]$ to the function on $(0, M)$ by $u_1^{-1}(t,y) = u_1^{-1}(t, s_1(t)) = 1$ for $ y \in (s_1(t), M)$
and $t \in [0,T]$, 
we can calculate $I_1(t)$ in the following way: 
\begin{align*}
& \  |u_1^{-1}(t, s_1(t)x) - u_2^{-1}(t, s_2(t)x)|  \\
\leq  & \ |u_1^{-1}(t, s_1(t)x) - u_1^{-1}(t, s_2(t)x)|  +  |u_1^{-1}(t, s_2(t)x) - u_2^{-1}(t, s_2(t)x)|  \\
=:  &  \ I_{11}(t,x) + I_{12}(t,x)   \mbox{ for } (t,x) \in Q(T). 
\end{align*} 
This yields 
\begin{align}
 I_{11}(t, x)  \leq |u_{1x}^{-1}(t)|_{L^{\infty}(0,M)} |s_1(t)x - s_2(t)x|  \leq \frac{1}{\delta} |u_x(t)|_H \quad \mbox{ for } (t,x) \in Q(T). 
  \label{40-1.5}
\end{align} 
For $I_{12}$ it holds that 
\begin{equation}
\begin{array}{ll}
I_{12}(t,x)  \leq  & \displaystyle \frac{1}{\delta} |u_1(t, u_2^{-1}(t, s_2(t)x)) - u_2(t, u_2^{-1}(t, s_2(t)x))| \mbox{ for } (t,x) \in Q(T).  
\end{array}
 \label{40-2} 
 \end{equation} 
 In fact, we put $u_1^{-1}(t, s_2(t)x) = U_1$ and $u_2^{-1}(t, s_2(t)x) = U_2 \in [0,1]$. 
 It is clear that $s_2(t)x =u_2(t, U_2)$. If $s_2(t)x \leq s_1(t)$, then $s_2(t)x = u_1(t, U_1)$. 
 This implies that 
 \begin{align*}
& 0 =  u_1(t, U_1) - u_2(t, U_2) = u_1(t, U_1) - u_1(t, U_2)  + u_1(t, U_2) - u_2(t, U_2), & \\  
&  |u_1(t, U_2) - u_2(t, U_2)|  = |u_1(t, U_1) - u_1(t, U_2)|  \geq \delta |U_1- U_2|. 
\end{align*}  
 If $s_2(t)x > s_1(t)$, then the extension of $u_1^{-1}$ implies $U_1 = 1$, namely, $u_1(t, U_1) = s_1(t)$.  Clearly, we have $U_1 \geq U_2$ and 
 $u_1(t, U_1) < u_2(t, U_2)$. Hence,  we see that
 \begin{align*}
 0 > & \  u_1(t, U_1) - u_1(t, U_2) + u_1(t, U_2) - u_2(t, U_2) \\
   \geq  & \ \delta (U_1 - U_2) + u_1(t, U_2) - u_2(t, U_2)
 \end{align*}
 so that 
 $$  \delta|U_1 - U_2| \leq |u_1(t, U_2) - u_2(t, U_2)|. $$
 Thus, we get \eqref{40-2}.   Hence, by \eqref{40-1.5}, \eqref{40-2} and \eqref{4a0} we have 
 \begin{align*}
 I_1(t)  & \leq \frac{4 R^2}{\delta^2}  (|u_x(t)|_H^2 + \int_0^1 |u_1(t, u_2^{-1}(t, s_2(t)x)) - u_2(t, u_2^{-1}(t, s_2(t)x))|^2 dx \\
 & \leq \frac{4 R^2}{\delta^2}  (|u_x(t)|_H^2 + \frac{M}{\delta} |u(t)|_H^2) \quad  \mbox{ for } t \in  [0,T]. 
 \end{align*}
 Also, it is obvious that
 $$ I_2(t) \leq \frac{2M}{\delta} |v(t)|_H^2  \quad  \mbox{ for } t \in  [0,T]. $$
Therefore, we have proved this Lemma.  
\end{proof}

Next, we give a lemma concerned with the estimate  for $\bar{w}$.   
\begin{lem}\label{Uni_P2} For any $\varepsilon > 0$ there exists a positive constant $C_3(\varepsilon)$ such that 
\begin{align}
& \frac{1}{2} \frac{d}{dt} |\bar{w}|_H^2 + \frac{\mu \kappa}{2M^2} |\bar{w}_x|_H^2  \nonumber \\
\leq &  C_3(\varepsilon) (|\bar{w}_{2x}|_{H^1(0,1)}^2 + 1) |\bar{w}|_H^2 +  C_3(\varepsilon) (|u_x|_H^2 + |u_t|_H^2) 
+ \varepsilon |u_{tx}|_H^2 \mbox{ a.e. on } [0,T].   \label{4-2-a} 
\end{align}
\end{lem}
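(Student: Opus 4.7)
I plan to derive the stated inequality by testing the difference of the governing $\bar p_i$-equations from problem $(\overline{P2})$ against $\bar w = \bar w_1 - \bar w_2$, integrating by parts on the diffusion term, and absorbing every remaining term by Young's inequality together with Lemma \ref{lemma1} (1D Sobolev traces) and Lemma \ref{bar_v} (control of $\bar v$ by $|u_x|_H + |u_t|_H$). Subtracting the $(\overline{P2})$-equations for $i=1,2$ in conservative form gives
\begin{equation*}
\bar w_t - \Bigl(\frac{\kappa}{s_1^2}\bar p_{1x} - \frac{\kappa}{s_2^2}\bar p_{2x}\Bigr)_x = -\frac{1}{s_1}(\bar v_1\bar w_1)_x + \frac{1}{s_2}(\bar v_2\bar w_2)_x + \frac{s_1'}{s_1}x\bar w_{1x} - \frac{s_2'}{s_2}x\bar w_{2x}.
\end{equation*}
Multiplying by $\bar w$ and integrating over $(0,1)$ produces $\frac{1}{2}\frac{d}{dt}|\bar w|_H^2$ from the time derivative, a volume diffusion integral $\int_0^1 (\frac{\kappa}{s_1^2}\bar p_{1x}-\frac{\kappa}{s_2^2}\bar p_{2x})\bar w_x\,dx$ after integration by parts, and the corresponding boundary trace contributions at $x=0,1$.

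For the volume diffusion I split $\frac{\kappa}{s_1^2}\bar p_{1x}-\frac{\kappa}{s_2^2}\bar p_{2x} = \frac{\kappa}{s_1^2}\bar p_x + \kappa(\frac{1}{s_1^2}-\frac{1}{s_2^2})\bar p_{2x}$ and use $\bar p_i = \beta(\bar w_i)$ to write $\bar p_x = \beta'(\bar w_1)\bar w_x + [\beta'(\bar w_1)-\beta'(\bar w_2)]\bar w_{2x}$. This isolates the coercive piece $\int_0^1 \frac{\kappa}{s_1^2}\beta'(\bar w_1)\bar w_x^2\,dx\geq \frac{\mu\kappa}{M^2}|\bar w_x|_H^2$ via the uniform lower bound on $\beta'$ available from (A1) and $s_1\leq M$; half of this stays on the LHS and the other half absorbs the cross term $\int \frac{\kappa}{s_1^2}[\beta'(\bar w_1)-\beta'(\bar w_2)]\bar w_{2x}\bar w_x\,dx$ through the Lipschitz bound $|\beta'(\bar w_1)-\beta'(\bar w_2)|\leq C_\rho|\bar w|$ and the Sobolev estimate $|\bar w_{2x}|_{L^\infty}\leq 3(|\bar w_{2xx}|_H+|\bar w_{2x}|_H)$, producing the factor $(|\bar w_{2x}|_{H^1}^2+1)|\bar w|_H^2$. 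The remaining $(1/s_1^2-1/s_2^2)\bar p_{2x}\bar w_x$ piece contributes a $|u_x|_H^2$ term through $|s|\leq |u_x|_H$. The boundary terms are evaluated via the Neumann conditions \eqref{BC2'}: after splitting the differences into pieces involving $s$, $s'$ and $\psi(s_1)-\psi(s_2)$, Lemma \ref{lemma1} lets me replace $|\bar w|_{L^\infty}$ at the endpoints by a small fraction of $|\bar w_x|_H$ plus $|\bar w|_H$, which leads both to the $\varepsilon|u_{tx}|_H^2$ contribution and to additional $C(\varepsilon)(|\bar w|_H^2+|u_x|_H^2)$ terms.

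The advection terms $-\frac{1}{s_1}(\bar v_1\bar w_1)_x+\frac{1}{s_2}(\bar v_2\bar w_2)_x$ and the drift terms $\frac{s_1'}{s_1}x\bar w_{1x}-\frac{s_2'}{s_2}x\bar w_{2x}$ are split analogously into pieces involving $\bar v$ (estimated by Lemma \ref{bar_v}), $\bar w$ carrying bounded coefficients, and coefficient differences in $s$ or $s'$. A further integration by parts where convenient (to move one derivative off $\bar w_i$) followed by Young's inequality absorbs each piece into $C(\varepsilon)(|\bar w_{2x}|_{H^1}^2+1)|\bar w|_H^2 + C(\varepsilon)(|u_x|_H^2+|u_t|_H^2)+\varepsilon|u_{tx}|_H^2$, plus a small part of $|\bar w_x|_H^2$ re-absorbed on the LHS. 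The main obstacle is the systematic appearance of the factor $s'$ both in the interior drift term and at the boundary trace at $x=1$: since $s'$ is only controlled by $|u_{tx}|_H$, which is not in the current energy space for $\bar w$, Young's inequality with a free small parameter $\varepsilon$ is the only available device, which is precisely why the conclusion tolerates the $\varepsilon|u_{tx}|_H^2$ term — it will be traded later against the viscoelastic dissipation $\frac{k_v}{2}|u_{tx}|_H^2$ produced by Lemma \ref{Uni_P1} in the final coupled Grönwall argument for uniqueness.
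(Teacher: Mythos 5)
Your proposal is correct and follows essentially the same route as the paper's proof: test the difference equation against $\bar w$, integrate by parts on the diffusion term, isolate the coercive piece $\frac{\kappa}{s^2}\beta'(\bar w_1)|\bar w_x|^2$ (yielding $\frac{\mu\kappa}{M^2}|\bar w_x|_H^2$), absorb the cross term via the Lipschitz bound on $\beta'$ and the Gagliardo--Nirenberg control of $|\bar w_{2x}|_{L^\infty}$, treat the boundary terms with \eqref{BC2'} and Lemma \ref{lemma1}, and handle the advection/drift pieces with Lemma \ref{bar_v} plus one further integration by parts, leaving $\varepsilon|u_{tx}|_H^2$ as the unavoidable residue of the $s'$ terms. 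The only differences are cosmetic: you split the diffusive flux as $\frac{\kappa}{s_1^2}\bar p_x+\kappa(\frac{1}{s_1^2}-\frac{1}{s_2^2})\bar p_{2x}$ whereas the paper uses $(\frac{\kappa}{s_1^2}-\frac{\kappa}{s_2^2})\beta(\bar w_1)_x+\frac{\kappa}{s_2^2}(\beta(\bar w_1)-\beta(\bar w_2))_x$, and you keep the flux in conservative form before integrating by parts; both choices are interchangeable since $s_i$ is $x$-independent. Your closing remark about why the $\varepsilon|u_{tx}|_H^2$ tolerance is exactly what is needed (to be traded against the $\frac{k_v}{2}|u_{tx}|_H^2$ dissipation from Lemma \ref{Uni_P1}) correctly anticipates the final Gronwall step.
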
 
\begin{proof}
Using \eqref{EQ2'},  we have 
\begin{equation}
\bar{w}_t - (\frac{\kappa}{s_1^2} \beta(\bar{w}_1)_{xx} - \frac{\kappa}{s_2^2} \beta(\bar{w}_2)_{xx} ) 
   = - ( \frac{1}{s_1} (\bar{v}_1 \bar{w}_1)_x - \frac{1}{s_2} (\bar{v}_2 \bar{w}_2)_x) 
    + (\frac{s_1'}{s_1} x \bar{w}_{1x} -  \frac{s_2'}{s_2} x \bar{w}_{2x}) \mbox{ a.e. in } Q(T). \label{4-2-b} 
\end{equation}
We multiply \eqref{4-2-b} by $\bar{w}$ and integrate it over [0,1]. Then, we see that 
\begin{align}
& \frac{1}{2} \frac{d}{dt} |\bar{w}|_H^2 \nonumber \\
= & (\frac{\kappa}{s_1^2} \beta(\bar{w}_1)_{xx} - \frac{\kappa}{s_2^2} \beta(\bar{w}_2)_{xx}, \bar{w})_H 
  - ( \frac{1}{s_1} (\bar{v}_1 \bar{w}_1)_x - \frac{1}{s_2} (\bar{v}_2 \bar{w}_2)_x, \bar{w})_H 
    + (\frac{s_1'}{s_1} x \bar{w}_{1x} -  \frac{s_2'}{s_2} x \bar{w}_{2x}, \bar{w})_H  \nonumber \\
=: &   \sum_{i=1}^3  I_i  \quad \mbox{  a.e. on } [0,T].  \label{4-2-c} 
\end{align}
We observe that 
\begin{align}
I_1(t) = & - (\frac{\kappa}{s_1(t)^2} \beta(\bar{w}_1(t))_{x} - \frac{\kappa}{s_2(t)^2} \beta(\bar{w}_2(t))_{x}, \bar{w}_x(t))_H  \nonumber \\
     & + \left(\frac{s_1'(t)}{s_1(t)} \psi(s_1(t)) - \frac{s_2'(t)}{s_2(t)} \psi(s_2(t)) \right) \bar{w}(t,1) 
    -  h_0(t) \left(\frac{1}{s_1(t)} - \frac{1}{s_2(t)} \right) \bar{w}(t,0)  \nonumber \\
    =  & - (\frac{\kappa}{s_1(t)^2} - \frac{\kappa}{s_2(t)^2} )( \beta(\bar{w}_1(t))_{x}, \bar{w}_x(t))_H     
    - \frac{\kappa}{s_2(t)^2} (\beta(\bar{w}_1(t))_{x} - \beta(\bar{w}_2(t))_{x}, \bar{w}_x(t))_H  \nonumber \\
     & - \left(\frac{s_1'(t)}{s_1(t)} \psi(s_1(t)) - \frac{s_2'(t)}{s_2(t)} \psi(s_2(t)) \right) \bar{w}(t,1) 
    -  h_0(t) \left(\frac{1}{s_1(t)} - \frac{1}{s_2(t)} \right) \bar{w}(t,0)  \nonumber  \\
    =:  &\sum_{j=1}^4 I_{1j}(t)   \quad \mbox{  for a.e. }  t \in [0,T],  \label{4-2-d} 
\end{align}
\begin{align}
|I_{11}(t)| \leq & \kappa \left| \frac{s_2(t)^2 - s_1(t) ^2}{s_1(t)^2 s_2(t)^2} \right|  |\beta(\bar{w}_{1})_x(t)|_H |\bar{w}_x(t)|_H \nonumber \\
\leq & \frac{2 \kappa M R }{\delta^4} |s(t)|  |\bar{w}_x(t)|_H  \quad \mbox{  for a.e. }  t \in [0,T],  \label{4-2-e} 
\end{align}
\begin{align}
I_{12}(t) \leq &  - \frac{\kappa}{s_2(t)^2} (\beta'(\bar{w}_1(t)) \bar{w}_{x}(t),  \bar{w}_x(t))_H  
               - \frac{\kappa}{s_2(t)^2} ( (\beta'(\bar{w}_1(t)) -  \beta'(\bar{w}_2(t))) \bar{w}_{2x}(t), \bar{w}_x(t))_H  \nonumber \\
         \leq &  - \frac{\kappa \mu}{M^2} |\bar{w}_{x}(t)|_H^2  
               + \frac{\kappa C_{\rho} }{\delta^2}  |\bar{w}_{2x}(t) \bar{w}(t)|_H |\bar{w}_x(t)|_H  \nonumber \\               
  \leq &  - \frac{3 \kappa \mu}{4M^2} |\bar{w}_{x}(t)|_H^2  + R_1  |\bar{w}_{2x}(t)|_{L^{\infty}(0,1)}^2 |\bar{w}(t)|_H^2  
 \quad \mbox{  for a.e. }  t \in [0,T],  \label{4-2-f} 
\end{align}
where $R_1$ is a positive constant depending only $\mu$, $\kappa$, $M$, $C_{\rho}$ and $\delta$. Also, according to Lemma \ref{lemma1} 
for any $\varepsilon_1 > 0$ and $\varepsilon_2 > 0$ we  see that
\begin{align}
|I_{13}(t)|  \leq &   \left( \left|\frac{1}{s_1(t)} - \frac{1}{s_2(t)} \right|  |s_1'(t)| |\psi(s_1(t))| + \left|\frac{1}{s_2(t)} s'(t) \psi(s_1(t)) \right|  \right. \nonumber \\
   & \left.  \ \ + |\frac{s_2'(t)}{s_2(t)} | |\psi(s_1(t)) - \psi(s_2(t))| \right) |\bar{w}(t,1)|  
 \nonumber \\
 \leq &   \left( \frac{R C_{\psi} }{\delta^2} |s(t)|  + \frac{C_{\psi}}{\delta} |s'(t)|  
    + \frac{R C_{\psi}}{\delta} |s(t)| \right)   |\bar{w}(t,1)| 
 \nonumber \\
 \leq & 2( \frac{R C_{\psi} }{\delta^2} + \frac{C_{\psi}}{\delta} + \frac{R C_{\psi}}{\delta} ) ( |u_x(t)|_H + |u_{tx}(t)|_H)
  (|\bar{w}_x(t)|_H^{1/2} |\bar{w}(t)|_H^{1/2} + |\bar{w}(t)|_H)  \nonumber \\    
  \leq & \varepsilon_1 |\bar{w}_x(t)|_H^2 +  \varepsilon_2 |u_{tx}(t)|_H^2 + R_2(\varepsilon_1, \varepsilon_2) (|u_x(t)|_H^2 + |\bar{w}(t)|_H^2)
 \quad \mbox{  for a.e. }  t \in [0,T],  \label{4-2-g} 
\end{align}
where $R_2(\varepsilon_1, \varepsilon_2)$ is a positive constant depending on $\varepsilon_1$ and 
$\varepsilon_2$. Moreover,  by applying Lemma \ref{lemma1}, again,  it is clear that 
\begin{align}
I_{14}(t) \leq &   |\frac{s(t)}{s_1(t) s_2(t)}|  |h_0(t)| |\bar{w}(t, 0)| \nonumber \\
  \leq &  2\frac{|h_0|_{L^{\infty}(0,T)}}{\delta^2} |u_{x}(t)|_H   (|\bar{w}_x(t)|_H^{1/2} |\bar{w}(t)|_H^{1/2} + |\bar{w}(t)|_H)   \nonumber \\
\leq & \varepsilon_1  |\bar{w}_x(t)|_H^2 +  R_3(\varepsilon_1)  (|u_x(t)|_H^2 + |\bar{w}(t)|_H^2)
   \quad \mbox{  for a.e. }  t \in [0,T],  \label{4-2-h} 
\end{align}
where $R_3(\varepsilon_1)$ is a positive constant depending on $\varepsilon_1$. 

Next, since $s_i'(t) = \bar{v}_i(t,1)$ and $\bar{v}_i(t,0)$ for $t \in [0,T]$ and $i  = 1, 2$, by applying integration by parts we observe that 
\begin{align}
& I_2(t) + I_3(t) \nonumber \\
 = & (\frac{1}{s_1(t)} \bar{v}_1(t) \bar{w}_1(t) - \frac{1}{s_2(t)} \bar{v}_2(t) \bar{w}_2(t), \bar{w}_x(t))_H 
  - (\frac{s_1'(t)}{s_1(t)} \bar{w}_1(t) - \frac{s_2'(t)}{s_2(t)} \bar{w}_2(t), (x\bar{w}(t))_x)_H \nonumber \\
 = & (\frac{1}{s_1(t)} -  \frac{1}{s_2(t)}) (\bar{v}_1(t) \bar{w}_1(t), \bar{w}_x(t))_H 
 + \frac{1}{s_2(t)} (\bar{v}(t) \bar{w}_1(t), \bar{w}_x(t))_H 
 + \frac{1}{s_2(t)} (\bar{v}_2(t) \bar{w}(t), \bar{w}_x(t))_H \nonumber \\
 & -  (\frac{1}{s_1(t)} -  \frac{1}{s_2(t)}) s_1'(t)  (\bar{w}_1(t), (x\bar{w})_x(t))_H 
 -   \frac{1}{s_2(t)}  s'(t)  (\bar{w}_1(t), (x\bar{w})_x(t))_H \nonumber \\
 & -   \frac{1}{s_2(t)}  s_2'(t)  (\bar{w}(t), (x\bar{w})_x(t))_H \nonumber \\
=: & \sum_{i=1}^6 \hat{I}_i(t) \quad \mbox{  for a.e. }  t \in [0,T].   \label{4-2-i} 
\end{align}
Thanks to Lemma \ref{bar_v},  \eqref{4a0} and \eqref{4a1}, we have
\begin{align}
|\hat{I}_1(t)|  \leq &  \frac{1}{\delta^2} |s(t)|  |\bar{v}_1(t)|_{L^{\infty}(0,1)}  |\bar{w}_1(t)|_H |\bar{w}_x(t)|_H  \nonumber \\
\leq & \frac{R^2}{\delta^2} |u_x(t)|_H  |\bar{w}_x(t)|_H  \nonumber \\
\leq  & \varepsilon_1 |\bar{w}_x(t)|_H^2 + \frac{R^4}{4 \varepsilon_1 \delta^4} |u_x(t)|_H^2
\quad \mbox{  for a.e. }  t \in [0,T],  
 \label{4-2-j} 
\end{align}
\begin{align}
|\hat{I}_2(t)|  \leq &  \frac{1}{\delta}  |\bar{w}_1(t)|_{L^{\infty}(0,1)}  |\bar{v}(t)|_H |\bar{w}_x(t)|_H  \nonumber \\ 
\leq  & \varepsilon_1 |\bar{w}_x(t)|_H^2 + \frac{R^2C_2^2 }{2 \varepsilon_1 \delta^2} (|u_x(t)|_H^2  + |u_t(t)|_H^2)
\quad \mbox{  for a.e. }  t \in [0,T],  
 \label{4-2-k} 
\end{align}
\begin{align}
|\hat{I}_3(t)|  \leq &  \frac{1}{\delta}  |\bar{v}_2(t)|_{L^{\infty}(0,1)}  |\bar{w}(t)|_H |\bar{w}_x(t)|_H  \nonumber \\ 
\leq  & \varepsilon_1 |\bar{w}_x(t)|_H^2 + \frac{R^2 }{4 \varepsilon_1 \delta^2} |\bar{w}(t)|_H^2  
\quad \mbox{  for a.e. }  t \in [0,T],  
 \label{4-2-l} 
\end{align}
\begin{align}
|\hat{I}_4(t)|  \leq &  \frac{1}{\delta^2} |s(t)|  |s_1'(t)|  |\bar{w}_1(t)|_{H}  (|\bar{w}(t)|_H + |\bar{w}_x(t)|_H)  \nonumber \\ 
\leq  & \varepsilon_1 |\bar{w}_x(t)|_H^2 + (\frac{R^4 }{4 \varepsilon_1 \delta^4} +   \frac{R^4 }{2 \delta^4}) 
                 |u_x(t)|_H^2 + \frac{1}{2} |\bar{w}(t)|_H^2  
\quad \mbox{  for a.e. }  t \in [0,T],  
 \label{4-2-m} 
\end{align}
\begin{align}
|\hat{I}_5(t)|  \leq &  \frac{1}{\delta} |s'(t)|   |\bar{w}_1(t)|_{H}  (|\bar{w}(t)|_H + |\bar{w}_x(t)|_H)  \nonumber \\ 
\leq  &  \frac{2R}{\delta} |u_t(t)|_H^{1/2} |u_{tx}(t)|_H^{1/2}    (|\bar{w}(t)|_H + |\bar{w}_x(t)|_H)  \nonumber \\
\leq  &   \varepsilon_1 |\bar{w}_x(t)|_H^2 +  \varepsilon_2 |u_{tx}(t)|_H^2 + R_4(\varepsilon_1, \varepsilon_2)  
( |u_t(t)|_H^2 +  |\bar{w}(t)|_H^2)  \quad \mbox{  for a.e. }  t \in [0,T],  
 \label{4-2-n} 
\end{align}
\begin{align}
|\hat{I}_6(t)|  \leq &  \frac{1}{\delta} |s_2'(t)|   |\bar{w}(t)|_{H}  (|\bar{w}(t)|_H + |\bar{w}_x(t)|_H)  \nonumber \\ 
\leq  &   \varepsilon_1 |\bar{w}_x(t)|_H^2 + (\frac{R^2}{4 \varepsilon_1 \delta^2} + \frac{R}{\delta}) |\bar{w}(t)|_H^2  \quad \mbox{  for a.e. }  t \in [0,T],  
 \label{4-2-o} 
\end{align}
where $R_4(\varepsilon_1, \varepsilon_2)$ is a positive constant depending on $\varepsilon_1$  and  $\varepsilon_2$.

From \eqref{4-2-c} $\sim$ \eqref{4-2-o} it follows that 
\begin{align*}
& \frac{1}{2} \frac{d}{dt} |\bar{w}|_H^2 + \frac{3 \kappa \mu}{4M^2} |\bar{w}_{x}|_H^2 \\
\leq &  8  \varepsilon_1 |\bar{w}_x|_H^2  + 2 \varepsilon_2 |u_{tx}|_H^2 
 + R_5(\varepsilon_1, \varepsilon_2) ( |\bar{w}_{2x}|_{H^1(0,1)}^2 + 1) |\bar{w}|_H^2   \\
 & \ +  R_5(\varepsilon_1, \varepsilon_2)(|u_x|_H^2 + |u_t(t)|_H^2) 
 \quad \mbox{   a.e.  on }  [0,T],  
\end{align*}
where $R_5(\varepsilon_1, \varepsilon_2)$ is a positive constant depending on $\varepsilon_1$  and  $\varepsilon_2$.   Here, by taking 
$\varepsilon_1 > 0$ with $\displaystyle 9  \varepsilon_1 \leq  \frac{ \kappa \mu}{4M^2}$,  we can obtain \eqref{4-2-a}.  
Thus, we have proved Lemma \ref{Uni_P2}. 
\end{proof} 

\vskip 12pt
\begin{proof}[Proof of Theorem \ref{main_th}(uniqueness)] 
Lemmas \ref{Uni_P1} and \ref{Uni_P2} imply that 
\begin{align}
& \frac{d}{dt} ( \frac{m}{2} |u_t|_H^2 + \frac{\gamma}{2} |u_{xx}|_H^2 + \frac{k}{4} |u_x|_H^2 + \frac{1}{2} |\bar{w}|_H^2) 
   + \frac{k_v}{4} |u_{tx}|_H^2 + \frac{\kappa \mu}{2 M^2} |\bar{w}_x|_H^2  \nonumber \\
 \leq & C_4(1 + |\bar{p}_{1x}|_{L^{\infty}(0,1)}^2 ) |u_x|_H^2  +  C_4  |u_t|_H^2
 + C_4 (|\bar{w}_{2x}|_{H^1(0,1)}^2 + 1) |\bar{w}|_H^2  \quad \mbox{   a.e.  on }  [0,T],  \nonumber 
\end{align}
where $C_4$ is a positive constant. 
The uniqueness of the solutions is a direct application of  Gronwall's inequality.  
\end{proof}

\section{ Solvability of (P1)}  \label{P1} 
In our proof of Theorem \ref{main_th}, making use of weak solutions plays a very important role. Here, we give a definition of weak solutions to (P1).
\begin{dfn} \label{def_weak_P1} \rm
Let $T > 0$ and $\hat{p}$, $u_0$ and $v_0$ be given functions. 
We call a function $u$ on $Q(T)$  a weak solution of (P1)($\nu(\hat{p}), u_0,  v_0$), if the following conditions hold: 
\begin{itemize}
\item[(W1)]$u \in W^{1,\infty}(0, T; H) \cap L^{\infty}(0, T; X) \cap W^{1,2}(0, T; H^1(0,1)) =: W_1(T)$.
\item[(W2)] $u_x > 0$ on $\overline{Q(T)}$. 
\end{itemize}
$$ \leqno{\mbox{(W3)}}  
\left\{ 
\begin{array}{ll}
& \displaystyle  m \int_0^T (u_t, \eta_t)_H dt + \gamma \int_0^T (u_{xx}, \eta_{xx})_H dt + \int_0^T (f(u_{x}) + k_v u_{tx}, \eta_x)_H dt \\[0.2cm]
= & \displaystyle  - \int_0^T \varphi(u(t,1)) \eta(t,1) dt - \int_0^T ( \nu(\hat{p}), \eta_x)_H dt + m(v_0, \eta(0))_H \\[0.2cm]
& \qquad \mbox{ for } \eta \in  W^{1,2}(0, T; H) \cap L^{\infty}(0,T; X) \mbox{ with } \eta(T) = 0.
\end{array} \right. 
$$

\end{dfn}


The aim of this section is to prove Proposition \ref{uni_weak_P1} and Proposition \ref{exist_weak_P1}. 
First, we show the uniqueness of weak solutions to (P1) by applying the dual equation method - a trick needed to compensate for the  lack of regularity of weak solutions.
  
\begin{prop} \label{uni_weak_P1} 
Assume (A2) and (A3). 
If $u_0 \in X$ with $u_{0x} > 0$ on $[0,1]$, $v_0 \in H$ and $\hat{p} \in L^2(0,T; H^1(0,1))$, then 
(P1)($\hat{p}, u_0,  v_0$)  admits at most one weak solution. 
\end{prop}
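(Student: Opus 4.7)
My strategy is the dual-equation (integrated test function) method, compelled by the absence of $u_{tt}$-regularity: condition (W1) gives only $u_t\in L^\infty(0,T;H)$, so direct testing against $u_t$ is inadmissible. Let $u_1,u_2$ be two weak solutions of (P1)$(\hat p,u_0,v_0)$ and set $u:=u_1-u_2$, so $u(0)\equiv 0$. Subtracting the two weak formulations, the common $v_0$-contribution and the prescribed $\nu(\hat p)$-term cancel, producing the linear-in-$u$ identity
$$m\!\int_0^T (u_t,\eta_t)_H\,dt + \gamma\!\int_0^T (u_{xx},\eta_{xx})_H\,dt + \!\int_0^T\bigl(f(u_{1x})-f(u_{2x})+k_v u_{tx},\eta_x\bigr)_H\,dt = -\!\int_0^T[\varphi(u_1(t,1))-\varphi(u_2(t,1))]\,\eta(t,1)\,dt$$
for every admissible $\eta$. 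Using (W2) together with the embedding $L^\infty(0,T;W^{2,2}(0,1))\cap W^{1,2}(0,T;H^1(0,1))\hookrightarrow C(\overline{Q(T)})$ applied to each $u_{i,x}$, there exist positive constants $\delta,M$ with $\delta\le u_{i,x}\le M$ on $\overline{Q(T)}$, so that $f$ is Lipschitz on $[\delta,M]$ and $\varphi$ is Lipschitz by (A2).

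Fix $\tau\in(0,T]$ and take the time-integrated test function
$$\eta(t,x) := \chi_{[0,\tau]}(t)\int_t^\tau u(\sigma,x)\,d\sigma,$$
which lies in $W^{1,2}(0,T;H)\cap L^\infty(0,T;X)$ with $\eta(T)=0$ and $\eta_t=-u$ on $[0,\tau)$. Substituting $\eta$ and using $u(0)\equiv 0$ (so $u_x(0)\equiv 0$), integrations by parts in $t$ convert the three leading terms into $-\tfrac{m}{2}|u(\tau)|_H^2$, $\tfrac{\gamma}{2}\bigl|\int_0^\tau u_{xx}\,d\sigma\bigr|_H^2$, and $k_v\int_0^\tau |u_x|_H^2\,dt$, respectively. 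The Lipschitz bounds together with $|\eta_x(t)|_H,\ |\eta(t,1)|\le\int_t^\tau|u_x(\sigma)|_H\,d\sigma$ control the remaining nonlinear terms by $C\tau\int_0^\tau |u_x|_H^2\,dt$ (via Fubini/Cauchy--Schwarz on the resulting double time-integral). Rearranging yields the energy-type identity
$$\tfrac{m}{2}|u(\tau)|_H^2 = \tfrac{\gamma}{2}\Bigl|\int_0^\tau u_{xx}\,d\sigma\Bigr|_H^2 + k_v\int_0^\tau |u_x|_H^2\,dt + R(\tau),\qquad |R(\tau)|\le C\tau\int_0^\tau |u_x|_H^2\,dt.$$

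The main obstacle is to close this identity into a statement forcing $u\equiv 0$, since the pointwise quantity $|u(\tau)|_H^2$ is isolated on the left while the natural Gronwall input is an integrated right-hand side. My plan is to combine the identity with the Poincaré bound $|u(\tau)|_H\le|u_x(\tau)|_H$ and the relation $u_x(\tau)=\int_0^\tau u_{tx}\,d\sigma$ (valid since $u_x(0)\equiv 0$), then to integrate the displayed identity once more in $\tau$ and apply Fubini to reorganize the nested integrals into expressions directly dominated by $\int_0^s |u_x|_H^2\,dt$; for $s$ small enough to absorb the $C\tau$-correction into the $k_v$-term, a closed Gronwall inequality emerges that forces $u\equiv 0$ on a short interval $[0,T_*]$. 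Because the constants $\delta,M,C$ depend only on the data through (A2)--(A3) and (W1)--(W2) and are uniform in the starting time, the short-time uniqueness iterates over consecutive sub-intervals to cover $[0,T]$, establishing the proposition.
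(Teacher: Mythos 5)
Your method --- testing the difference weak formulation against $\eta(t,x)=\chi_{[0,\tau]}(t)\int_t^\tau u(\sigma,x)\,d\sigma$ --- is the Ladyzhenskaya (integrated test function) trick, which is a genuinely different route than the paper's. What you call ``the dual-equation method'' is in fact the paper's method, where one solves an adjoint backward-in-time linear problem (DP)$_n(\zeta)$ with smoothed coefficients $F_n,b_n$, proves a uniform bound on the adjoint solutions (Lemma \ref{lem_unif_DP}), and passes to the limit to get $\int_{Q(T)}u\zeta\,dx\,dt=0$ for every $\zeta\in C_0^\infty(Q(T))$. Your integrated-test-function technique avoids all of that approximation and limit-passage machinery, and your preliminary steps are sound: (W1)--(W2) and Remark \ref{Remark1} give $0<\delta\le u_{ix}\le M$ on $\overline{Q(T)}$, hence $f$ is Lipschitz on $[\delta,M]$, and your $\eta$ is an admissible test function.

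However, there is a genuine gap in the sign structure that makes your key identity unusable. You record the three leading terms as $-\tfrac{m}{2}|u(\tau)|_H^2$, $+\tfrac{\gamma}{2}\bigl|\int_0^\tau u_{xx}\,d\sigma\bigr|_H^2$, $+k_v\int_0^\tau|u_x|_H^2\,dt$, producing
\begin{equation*}
\tfrac{m}{2}|u(\tau)|_H^2=\tfrac{\gamma}{2}\Bigl|\int_0^\tau u_{xx}\,d\sigma\Bigr|_H^2+k_v\int_0^\tau|u_x|_H^2\,dt + R(\tau),\qquad |R(\tau)|\le C\tau\int_0^\tau|u_x|_H^2\,dt.
\end{equation*}
Both coercive quantities sit on the right: for small $\tau$ this only says $\tfrac{m}{2}|u(\tau)|_H^2\ge(k_v-C\tau)\int_0^\tau|u_x|_H^2\,dt$, a \emph{lower} bound on $|u(\tau)|_H$ which any exponentially growing nonzero $u$ satisfies; no Gronwall/Fubini reshuffling can turn that into $u\equiv 0$. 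The root cause is the sign of $m\int(u_t,\eta_t)_H$. If you derive the weak form from \eqref{EQ1} by multiplying by $\eta$ with $\eta(T)=0$ and integrating by parts, $m u_{tt}$ contributes $-m(v_0,\eta(0))_H - m\int_0^T(u_t,\eta_t)_H\,dt$, so the correct weak identity begins with $-m\int(u_t,\eta_t)_H\,dt$ --- and indeed the paper's proof of this proposition writes $-m\int(u_t,\eta_t)_H\,dt$ when subtracting the two formulations, even though the printed (W3) carries a ``$+$''. With the corrected sign your test function gives all three leading terms positive on the left,
\begin{equation*}
\tfrac{m}{2}|u(\tau)|_H^2+\tfrac{\gamma}{2}\Bigl|\int_0^\tau u_{xx}\,d\sigma\Bigr|_H^2+k_v\int_0^\tau|u_x|_H^2\,dt \le (L_f+C_\varphi)\,\tfrac{\tau}{2}\int_0^\tau|u_x|_H^2\,dt,
\end{equation*}
using $|\eta_x(t)|_H\le\int_t^\tau|u_x(\sigma)|_H\,d\sigma$, the Poincar\'e bounds $|u(t,1)|\le|u_x(t)|_H$, $|\eta(t,1)|\le|\eta_x(t)|_H$, and $\int_0^\tau g(t)\bigl(\int_t^\tau g\bigr)\,dt=\tfrac12(\int_0^\tau g)^2\le\tfrac{\tau}{2}\int_0^\tau g^2$. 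For $\tau<2k_v/(L_f+C_\varphi)$ the right-hand side is absorbed, forcing $u\equiv 0$ on $[0,\tau]$, and since $\delta,M,L_f,C_\varphi$ are uniform on $[0,T]$ the argument iterates exactly as you planned. So, once the sign is fixed, your route is valid and actually shorter than the paper's; the double-integration-plus-Fubini workaround you sketch is both unnecessary and, with either sign, ineffective.
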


\begin{proof}
Let $u_1$ and $u_2$ be weak solutions of (P1)($\hat{p}, u_0,  v_0$) on $[0,T]$ and put $u = u_1 - u_2$.  
By (W1) and (W2) we can take a positive constant $\delta$ satisfying $u_{ix} \geq \delta$ on $\overline{Q(T)}$ for $i = 1, 2$. 
Also, let $\eta \in  W^{2,2}(0, T; H) \cap L^{\infty}(0,T; X \cap H^4(0,1)) \cap W^{1,2}(0, T; H^2(0,1))$ with $\eta(T) = \eta_t(T) = 0$ and 
 $\eta_{xx}(\cdot, 0) = \eta_{xx}(\cdot,1) = 0$ on $[0,T]$. 
From (W3) it is clear that 
 \begin{align}
 & - m \int_0^T (u_t(t), \eta_t(t))_H dt + \gamma \int_0^T (u_{xx}(t), \eta_{xx}(t))_H dt 
  +  \frac{k}{2} \int_0^T (u_x(t), \eta_x(t))_H dt  \nonumber \\
&   - \frac{k}{4} \int_0^T ( \frac{1}{u_{1x}(t)^3} - \frac{1}{u_{2x}(t)^3}, \eta_x(t) )_H dt  
+ k_v \int_0^T (u_{tx}(t), \eta_{x}(t))_H dt \  (=:  \sum_{i=1}^5 I_i)    \nonumber \\
= & - \int_0^T ( \varphi(u_1(t,1)) - \varphi(u_2(t,1)) ) \eta(t,1) dt  \ (=: I_6).   \nonumber
 \end{align}
By elementary calculations,  we obtain that 
\begin{align*}
 I_1 + I_2 
=  m \int_0^T (u, \eta_{tt})_H dt + \gamma \int_0^T (u, \eta_{xxxx})_H dt - \gamma \int_0^T u(t,1) \eta_{xxx}(t,1) dt,  
\end{align*}
\begin{align*}
 I_3 
=  -\frac{k}{2}  \int_0^T (u(t), \eta_{xx}(t))_H dt + \frac{k}{2} \int_0^T u(t,1) \eta_{x}(t,1) dt,  
\end{align*}
\begin{align}
 I_4 
& =  \frac{k}{4}  \int_0^T (u_x(t),  F(t) \eta_{x}(t))_H dt  \nonumber \\
& = -  \frac{k}{4}  \int_0^T (u(t),  (F(t) \eta(t))_{x})_H dt + \frac{k}{4} \int_0^T u(t,1) F(t,1)  \eta_{x}(t,1) dt,  \label{5q-2}
\end{align}
\begin{align*}
 I_5 
=  k_v  \int_0^T (u(t), \eta_{txx}(t))_H dt -  k_v \int_0^T u(t,1) \eta_{tx}(t,1) dt,  
\end{align*}
where $\displaystyle F = \frac{u_{1x}^2 + u_{1x} u_{2x} + u_{2x}^2}{u_{1x}^3 u_{2x}^3}$ on $Q(T)$.  Since 
$F \in C(\overline{Q(T)}) \cap L^{\infty}(0,T; H^1(0,1))$, the first term in the right hand side of \eqref{5q-2}  is well-defined. 
Here, we put 
$$ b = \left\{ \begin{array}{cl} 
     \displaystyle -\frac{\varphi(u_1(t,1)) - \varphi(u_2(t,1))}{u(t,1)} & \mbox{ if } u(t,1) \ne 0, \\
     0 & \mbox{ otherwise}, 
     \end{array} \right. $$
and then we have $b \in L^{\infty}(0,T)$. Accordingly, we see that 
$$ I_6 = \int_0^T b(t) u(t,1) \eta(t,1) dt. $$
Hence, from these equations it holds that 
\begin{align}
& \int_0^T (u, m\eta_{tt} + \gamma \eta_{xxxx} - \frac{k}{2} \eta_{xx} - \frac{k}{4} (F\eta_x)_x + k_v \eta_{xx})_H dt \nonumber \\
= & \int_0^T u(\cdot,1) \left( b \eta(\cdot,1) + \gamma \eta_{xxx}(\cdot,1) - \frac{k}{2} \eta_x(\cdot, 1) - \frac{k}{4} F(\cdot,1) \eta_x(\cdot,1)
                  + k_v \eta_{tx}(\cdot, 1)  \right) dt \label{5q-3} 
\end{align}
for  $\eta \in  W^{2,2}(0, T; H) \cap L^{\infty}(0,T; X \cap H^4(0,1)) \cap W^{1,2}(0, T; H^2(0,1))$ with $\eta(T) = \eta_t(T) = 0$ and 
 $\eta_{xx}(\cdot, 0) = \eta_{xx}(\cdot,1) = 0$ on $[0,T]$.

Here, we can take $\{b_n\} \subset C^{\infty}([0,T])$ and $\{F_n\} \subset C^{\infty}(\overline{Q(T)})$ such that
$\{b_n\}$ and $\{F_n\}$ are bounded in $L^{\infty}(0,T)$ and $L^{\infty}(Q(T))$, respectively, 
$b_n \to b$ in $L^2(0,T)$  and $F_n \to F$ in $L^2(0,T; H)$  as $n \to \infty$.  Let $\zeta \in C_0^{\infty}(Q(T))$ and 
(DP)$_n(\zeta)$ be the following problem for each $n$: 
 \begin{align}
 & m \eta_{tt}^{(n)} + \gamma \eta_{xxxx}^{(n)}  - \frac{k}{2} \eta_{xx}^{(n)} - \frac{k}{4} (F_n \eta_x^{(n)})_x + k_v \eta_{txx}^{(n)} = \zeta 
     \mbox{ in } Q(T),  \label{5q-4} \\
 & \eta_{xx}^{(n)}(\cdot, 0) = \eta_{xx}^{(n)}(\cdot,1) = 0, \eta^{(n)}(\cdot,0) = 0 \mbox{ on  } [0,  T],  \nonumber  \\
 & b_n \eta^{(n)}(\cdot,1) + \gamma \eta_{xxx}^{(n)}(\cdot,1) - \frac{k}{2} \eta_x^{(n)}(\cdot, 1) - \frac{k}{4} F_n(\cdot,1) \eta_x^{(n)}(\cdot,1)
                  + k_v \eta_{tx}^{(n)}(\cdot, 1) = 0 \mbox{ on }  [0,  T],  \label{5q-44}\\
  & \eta^{(n)}(T) = \eta_t^{(n)}(T) = 0. \nonumber 
 \end{align}
 Since (DP)$_n$ is linear and has smooth coefficients,  (DP)$_n$ has a unique strong solution $\eta^{(n)}$ for each $n$.  
 Moreover, we see that 
 $\{\eta^{(n)} \}$ is bounded in $L^{\infty}(0,T; X)$,  and 
 namely, $|\eta^{(n)}(t)|_X \leq C'$ for $t \in  [0, T]$ and any $n$, where $C'$ is some positive constant.  
 A proof of this boundedness is given as Lemma \ref{lem_unif_DP}.  Hence, \eqref{5q-3} and \eqref{5q-4} imply that 
\begin{align*}
& \int_{Q(T)} u \zeta dx dt \\
= & \int_{Q(T)} u ( m \eta_{tt}^{(n)} + \gamma \eta_{xxxx}^{(n)}  - \frac{k}{2} \eta_{xx}^{(n)} - \frac{k}{4} (F_n \eta_x^{(n)})_x + k_v \eta_{txx}^{(n)} )dx  dt\\
= & -\frac{k}{4} \int_{Q(T)} u ( (F_n \eta_x^{(n)} )_x - (F \eta_x^{(n)})_x) dx dt
  + \int_0^T u(\cdot, 1) \eta^{(n)}(\cdot,1) ( b - b_n)  dt \\
& \  -\frac{k}{4}  \int_0^T u(\cdot, 1) \eta_x^{(n)}(\cdot,1) ( F(\cdot, 1) - F_n(\cdot,1))  dt \\
= & \frac{k}{4} \int_{Q(T)} u_x (F_n  - F) \eta_x^{(n)}dx dt
  + \int_0^T u(\cdot, 1) \eta^{(n)}(\cdot,1) ( b - b_n)  dt   \quad \mbox{ for } n. 
\end{align*}
By the definitions of $F_n$ and $b_n$ and uniform estimate for $\eta^{(n)}$ we have
$$ \int_{Q(T)} u \zeta \ dx dt  = 0  \mbox{ for any } \zeta \in C_0^{\infty}(Q(T)) $$
so that $u  = 0$ on $Q(T)$.  This is a conclusion of the present proposition.  
\end{proof} 

{
The method applying the dual equation is found in the book by Ladyzenskaja, Solonnikov and Ural’ceva \cite{LSU}. Also, by applying this method,  
 Niezgodka and Pawlow \cite{NP} obtained uniqueness of solutions of the Stefan problem, and Aiki \cite{Ai(Shape)}  proved uniqueness in the system for shape memory alloy materials.  
}

\begin{lem} \label{lem_unif_DP} 
There exists a positive constant $C'$ independent of $n$ such that 
$$ |\eta^{(n)}(t)|_X \leq C' \mbox{  for } t \in [0,T] \mbox{ and any  } n. $$ 
\end{lem}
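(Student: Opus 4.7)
The natural strategy is a backward energy estimate: multiply the equation \eqref{5q-4} by $\eta^{(n)}_t$ and integrate over $[t,T]\times[0,1]$, then use the boundary condition \eqref{5q-44} to absorb the boundary contribution at $x=1$, and finally apply a backward Gronwall inequality to convert the resulting integral inequality into a pointwise $L^\infty$-in-$t$ bound.

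Concretely, integration by parts in $x$ in each term of \eqref{5q-4} produces, at $x=0$, only vanishing contributions thanks to $\eta^{(n)}(\cdot,0)=0$ (hence $\eta^{(n)}_t(\cdot,0)=0$) and $\eta^{(n)}_{xx}(\cdot,0)=0$. At $x=1$, the remaining boundary terms combine into
$$ \Bigl(\gamma \eta^{(n)}_{xxx}(\cdot,1) - \tfrac{k}{2}\eta^{(n)}_x(\cdot,1) - \tfrac{k}{4}F_n(\cdot,1)\eta^{(n)}_x(\cdot,1) + k_v\eta^{(n)}_{tx}(\cdot,1)\Bigr)\eta^{(n)}_t(\cdot,1) = -b_n\eta^{(n)}(\cdot,1)\eta^{(n)}_t(\cdot,1),$$
by \eqref{5q-44}. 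Using the terminal conditions $\eta^{(n)}(T)=\eta^{(n)}_t(T)=0$, after integrating from $t$ to $T$ the identity takes the form
\begin{align*}
&\tfrac{m}{2}|\eta^{(n)}_t(t)|_H^2 + \tfrac{\gamma}{2}|\eta^{(n)}_{xx}(t)|_H^2 + \tfrac{k}{4}|\eta^{(n)}_x(t)|_H^2 + k_v\int_t^T |\eta^{(n)}_{tx}(\tau)|_H^2 d\tau \\
&\qquad = -\int_t^T (\zeta(\tau),\eta^{(n)}_t(\tau))_H d\tau + \int_t^T \Bigl[ \tfrac{k}{4}(F_n\eta^{(n)}_x,\eta^{(n)}_{tx})_H - b_n\eta^{(n)}(\cdot,1)\eta^{(n)}_t(\cdot,1)\Bigr] d\tau.
\end{align*}

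Now every term on the right can be bounded uniformly in $n$. Since $\eta^{(n)}(\cdot,0)=0$ and $\eta^{(n)}_t(\cdot,0)=0$, we have the trace estimates $|\eta^{(n)}(\cdot,1)|\le |\eta^{(n)}_x|_H$ and $|\eta^{(n)}_t(\cdot,1)|\le |\eta^{(n)}_{tx}|_H$. Using the uniform $L^\infty$-bounds on $\{b_n\}$ and $\{F_n\}$ (the latter inherited from $|F|_{L^\infty(Q(T))}\le 3M^2/\delta^6$) and Young's inequality, each mixed product can be split so that the $|\eta^{(n)}_{tx}|_H^2$-contribution is absorbed into $k_v\int_t^T|\eta^{(n)}_{tx}|_H^2 d\tau$, leaving
$$\tfrac{m}{2}|\eta^{(n)}_t(t)|_H^2 + \tfrac{\gamma}{2}|\eta^{(n)}_{xx}(t)|_H^2 + \tfrac{k}{4}|\eta^{(n)}_x(t)|_H^2 \le C\int_t^T \Bigl(|\eta^{(n)}_t(\tau)|_H^2 + |\eta^{(n)}_x(\tau)|_H^2\Bigr) d\tau + C|\zeta|_{L^2(Q(T))}^2,$$
with $C$ depending only on $m$, $\gamma$, $k$, $k_v$, $|b|_{L^\infty}$ and $|F|_{L^\infty}$. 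A backward Gronwall argument then yields a uniform bound on $|\eta^{(n)}_t(t)|_H^2+|\eta^{(n)}_{xx}(t)|_H^2+|\eta^{(n)}_x(t)|_H^2$, and since $\eta^{(n)}(\cdot,0)=0$ implies $|\eta^{(n)}(t)|_H\le |\eta^{(n)}_x(t)|_H$, the claimed uniform bound on $|\eta^{(n)}(t)|_X$ follows.

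The main delicate point is that the approximating coefficient $b_n$ is controlled only in $L^\infty(0,T)$, so the boundary term $-b_n\eta^{(n)}(\cdot,1)\eta^{(n)}_t(\cdot,1)$ cannot be integrated by parts in time without picking up $b_n'$, which is not bounded uniformly in $n$. The trick above avoids this by treating the boundary term as a source and absorbing the factor $|\eta^{(n)}_t(\cdot,1)|\le |\eta^{(n)}_{tx}|_H$ into the viscoelastic dissipation $k_v\int_t^T |\eta^{(n)}_{tx}|_H^2 d\tau$; the analogous step handles the quasilinear contribution $\tfrac{k}{4}(F_n\eta^{(n)}_x,\eta^{(n)}_{tx})_H$ without any derivative of $F_n$.
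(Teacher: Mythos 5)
Your argument is correct and is essentially the paper's proof: the paper performs the same energy estimate, the only presentational difference being that it first reverses time via $\hat\eta^{(n)}(t,x)=\eta^{(n)}(T-t,x)$ so that the viscoelastic term becomes $-k_v\hat\eta^{(n)}_{txx}$ and the problem becomes a forward evolution with zero initial data, then applies an ordinary (forward) Gronwall; you instead integrate from $t$ to $T$ and apply a backward Gronwall, which is the same computation. Your treatment of the boundary term is in fact slightly more careful, since you retain the factor $\eta^{(n)}(\cdot,1)$ and bound it by the trace inequality $|\eta^{(n)}(\cdot,1)|\le|\eta^{(n)}_x|_H$, whereas the paper's displayed identity drops that factor (an apparent typo that does not affect the conclusion).
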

\begin{proof}
First, thanks to the definitions of $F_n$ and $b_n$ we can choose a positive constant $\hat{C}$ satisfying
$$ 
|F_n(t)|_{L^{\infty}(0,1)} \leq \hat{C} \mbox{ and } |b_n(t)| \leq \hat{C} \mbox{ for } t \in [0,T]  \mbox{ and } n = 1, 2, \cdots.
$$

By putting $\hat{\eta}^{(n)}(t, x) = \eta^{(n)}(T-t, x)$, $\hat{F}_n(t, x) = F_n(T-t, x)$, $\hat{\zeta}(t, x) = \zeta(T-t, x)$  
for $(t, x) \in Q(T)$ and $\hat{b}_n(t) = b_n(T-t)$ for $t \in [0,T]$, it holds that 
  \begin{align}
 & m \hat{\eta}_{tt}^{(n)} + \gamma \hat{\eta}_{xxxx}^{(n)}  - \frac{k}{2} \hat{\eta}_{xx}^{(n)} - \frac{k}{4} (\hat{F}_n \hat{\eta}_x^{(n)})_x 
         - k_v \hat{\eta}_{txx}^{(n)} = \hat{\zeta} 
   \mbox{ in } Q(T),  \label{5r-4} \\
 & \hat{\eta}_{xx}^{(n)}(t,0) = \hat{\eta}_{xx}^{(n)}(t,1) = 0, \hat{\eta}^{(n)}(t,0) = 0 \mbox{ for } 0 \leq t \leq T,  \nonumber  \\
 & \hat{b}_n \hat{\eta}^{(n)}(\cdot,1) + \gamma \hat{\eta}_{xxx}^{(n)}(\cdot,1) - \frac{k}{2} \hat{\eta}_x^{(n)}(\cdot, 1) 
      - \frac{k}{4} \hat{F}_n(\cdot,1) \hat{\eta}_x^{(n)}(\cdot,1)
                   - k_v \hat{\eta}_{tx}^{(n)}(\cdot, 1) = 0 \mbox{ for } 0 \leq t \leq T,  \nonumber \\
  & \hat{\eta}^{(n)}(0) = \hat{\eta}_t^{(n)}(0) = 0. \nonumber 
 \end{align}
We multiply \eqref{5r-4} by $\hat{\eta}_t^{(n)}$ and integrate it over $(0,1)$. By integrating by parts and boundary conditions, we see that 
\begin{align*}
& \frac{d}{dt} ( \frac{m}{2} |\hat{\eta}_t^{(n)}|_H^2 + \frac{\gamma}{2}  |\hat{\eta}_{xx}^{(n)}|_H^2 + \frac{k}{4} |\hat{\eta}_{x}^{(n)}|_H^2)
   + k_v |\hat{\eta}_{tx}^{(n)}|_H^2 \\
= & -\frac{k}{4} (\hat{F}_n    \hat{\eta}_x^{(n)},  \hat{\eta}_{tx}^{(n)})_H + \hat{b}_n \hat{\eta}_t^{(n)}(\cdot,1) + 
        (\hat{\zeta},  \hat{\eta}_{t}^{(n)})_H \\
\leq &  \frac{k_v}{2} |\hat{\eta}_{tx}^{(n)}|_H^2 + \frac{k^2}{32 k_v} |\hat{F}_n|_{L^{\infty}(0,1)}^2 |\hat{\eta}_{x}^{(n)}|_H^2 
    + \frac{1}{k_v} |\hat{b}_n|^2 + \frac{k_v}{4} |\hat{\eta}_{tx}^{(n)}|_H^2 
    + \frac{1}{2} |\hat{\zeta}|_H^2 + \frac{1}{2} |\hat{\eta}_{t}^{(n)}|_H^2 
\end{align*}
 and 
 \begin{align*}
& \frac{d}{dt} ( \frac{m}{2} |\hat{\eta}_t^{(n)}|_H^2 + \frac{\gamma}{2}  |\hat{\eta}_{xx}^{(n)}|_H^2 + \frac{k}{4} |\hat{\eta}_{x}^{(n)}|_H^2)
   + \frac{k_v}{4} |\hat{\eta}_{tx}^{(n)}|_H^2 \\
\leq &  C''( |\hat{\eta}_{x}^{(n)}|_H^2 + |\hat{\eta}_{t}^{(n)}|_H^2 + 1)
    + \frac{1}{2} |\hat{\zeta}|_H^2  \quad \mbox{ on } [0,T], 
\end{align*}
where $C''$ is a positive constant depending on $\hat{C}$. Therefore, this Lemma is a direct application of Gronwall's inequality.
\end{proof}

\begin{prop} \label{exist_weak_P1} 
Assume (A2) and (A3). 
If $u_0 \in X$ with $u_{0x} > 0$ on $[0,1]$, $v_0 \in H$ and $\hat{p} \in L^2(0,T; H^1(0,1))$, then 
(P1)($\hat{p}, u_0,  v_0$)  has at least one weak solution. 
\end{prop}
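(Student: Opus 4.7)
The plan is to establish existence via the Faedo-Galerkin method, following the outline given after Theorem \ref{main_th}. Let $\{e_j\}_{j \geq 1}$ be an $H$-orthonormal Schauder basis of $X$, for instance the eigenfunctions associated with the bilinear form $(z,\eta) \mapsto \gamma(z_{xx}, \eta_{xx})_H + (z,\eta)_H$ on $X$ together with the natural boundary conditions $z_{xx}(0) = z_{xx}(1) = 0$. Write $X_n = \mathrm{span}\{e_1, \dots, e_n\}$ and seek approximations $u_n(t,x) = \sum_{j=1}^n c_{nj}(t) e_j(x)$ satisfying, for every $\eta \in X_n$,
\begin{equation*}
m(u_{ntt}, \eta)_H + \gamma(u_{nxx}, \eta_{xx})_H + (f(u_{nx}) + k_v u_{ntx}, \eta_x)_H + \varphi(u_n(t,1))\eta(1) + (\nu(\hat p), \eta_x)_H = 0,
\end{equation*}
with $u_n(0)$ and $u_{nt}(0)$ the $X_n$-projections of $u_0$ and $v_0$. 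Since $f$ is singular at $0$, I would first replace $f$ by a $C^1$ truncation $f_\epsilon$ that coincides with $f$ on $[\epsilon,\infty)$ and is globally Lipschitz, with a parameter $\epsilon \in (0, \min_{[0,1]} u_{0x})$ to be chosen later. The finite-dimensional ODE system for the coefficients $c_{nj}$ then has globally Lipschitz right-hand side and admits a unique solution on $[0,T]$.

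Testing with $\eta = u_{nt}(t) \in X_n$ produces the energy identity sketched in the introduction. Letting $\hat f_\epsilon$ denote a primitive of $f_\epsilon$ and $\hat\varphi$ a primitive of $\varphi$, this yields, for all $t \in [0,T]$,
\begin{equation*}
\frac{m}{2}|u_{nt}(t)|_H^2 + \frac{\gamma}{2}|u_{nxx}(t)|_H^2 + \int_0^1 \hat f_\epsilon(u_{nx}(t))\,dx + \hat\varphi(u_n(t,1)) + k_v\int_0^t |u_{ntx}|_H^2\,d\tau \leq C,
\end{equation*}
with $C$ independent of $n$ and $\epsilon$ (the cross term $\int_0^t (\nu(\hat p), u_{ntx})_H\,d\tau$ is absorbed by Young's inequality thanks to the boundedness of $\nu$). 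Since $\hat f_\epsilon(r) \geq \frac{k}{8r^2}$ for $r \geq \epsilon$, this delivers uniform bounds on $u_n$ in $L^\infty(0,T; X)$, on $u_{nt}$ in $L^\infty(0,T; H) \cap L^2(0,T; H^1(0,1))$, and on $\int_0^1 u_{nx}(t)^{-2}\,dx$ uniformly in $t$. Applying Lemma \ref{key_lemma} pointwise in $t$ then furnishes $\delta > 0$, independent of $n$, such that $u_{nx}(t,x) \geq \delta$ on $\overline{Q(T)}$. Choosing $\epsilon \in (0, \delta)$ ensures $f_\epsilon(u_{nx}) = f(u_{nx})$ throughout, so $u_n$ solves the original (untruncated) approximate problem.

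With these uniform estimates in hand, a subsequence (not relabelled) converges to some $u$ weakly-$\ast$ in $L^\infty(0,T; X)$, while $u_{nt}$ converges to $u_t$ weakly-$\ast$ in $L^\infty(0,T; H)$ and weakly in $L^2(0,T; H^1(0,1))$. Compactness of the embedding $X \hookrightarrow C^1([0,1])$ together with the Aubin-Lions lemma yields the strong convergence $u_n \to u$ in $C([0,T]; C^1([0,1]))$. In particular, $u_{nx} \to u_x$ uniformly on $\overline{Q(T)}$ with $u_x \geq \delta$, so $f(u_{nx}) \to f(u_x)$ and $\varphi(u_n(\cdot,1)) \to \varphi(u(\cdot,1))$ uniformly. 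Taking test functions of the form $\eta(t,x) = \sum_{j=1}^N \phi_j(t) e_j(x)$ with $\phi_j \in C^1([0,T])$, $\phi_j(T) = 0$, $N \leq n$, integrating the term $m(u_{ntt}, \eta)_H$ by parts in time (producing the initial-value contribution $m(v_0, \eta(0))_H$), and letting $n \to \infty$, one obtains the weak identity (W3) on this subset. A standard density argument extends (W3) to the full test class $W^{1,2}(0,T;H) \cap L^\infty(0,T;X)$ with $\eta(T) = 0$, while (W1) and (W2) follow from the uniform bounds and the lower bound $u_x \geq \delta$.

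The principal obstacle is the circular interaction between the singular nonlinearity $f$ and the Galerkin scheme: $f(u_{nx})$ is well defined only if $u_{nx}$ is bounded away from zero, but securing such a bound requires estimates that implicitly rely on $f$. The resolution hinges on the specific coercive structure of $f$, whose primitive blows up like $r^{-2}$ as $r \to 0^+$, activating Lemma \ref{key_lemma} with the $W^{2,2}$-bound furnished by the energy identity. The resulting uniform lower bound $u_{nx} \geq \delta$ decouples the singularity from the passage to the limit, and makes the truncation $f_\epsilon$ superfluous a posteriori.
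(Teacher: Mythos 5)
Your argument is essentially sound but follows a genuinely different route from the paper. You perform a \emph{direct} Galerkin approximation that already incorporates the nonlinear boundary term $\varphi(u_n(t,1))$ into the finite-dimensional ODE system, then pass to the limit by compactness. The paper, by contrast, first \emph{freezes} this term: it introduces an auxiliary problem (AP1)$(u_0,v_0,g,q)$ with a prescribed scalar $q(t)$ in place of $-\varphi(u(t,1))$, solves that by Galerkin (Lemma~\ref{exist_ODE} and Lemma~\ref{AP1}), and only then recovers the full problem via a Banach fixed-point iteration on the map $\Lambda\hat u = u(\cdot,1)$, showing $\Lambda^\ell$ is a contraction on a ball of $L^2(0,T)$ for $\ell$ large. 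Your route is more direct for existence alone and avoids the fixed-point layer; the paper's route has the side benefit of producing the Lipschitz-dependence estimate \eqref{diff_P1}, which is reused (through Lemma~\ref{strongAP1} and the uniqueness argument of Proposition~\ref{uni_weak_P1}) when bootstrapping to strong solutions in Proposition~\ref{exist_strong_P1}.

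One step in your writeup needs tightening. After truncating $f$ to a Lipschitz $f_\epsilon$ that agrees with $f$ on $[\epsilon,\infty)$, the energy identity only controls $\int_0^1 \hat f_\epsilon(u_{nx})\,dx$, and since $\hat f_\epsilon$ is \emph{bounded} on $(-\infty,\epsilon]$ this does \emph{not} by itself yield $\int_0^1 u_{nx}^{-2}\,dx \le r_1$ — so the hypothesis of Lemma~\ref{key_lemma} is not immediately available, and the claim that $\delta$ is obtained and then $\epsilon<\delta$ is chosen ``a posteriori'' is circular as stated. The gap is closed by a short continuation argument, which is exactly the mechanism the paper uses inside Lemma~\ref{exist_ODE}: since $u_{0nx}\ge \delta_0>\epsilon$ at $t=0$, on the maximal interval $[0,T')$ on which $u_{nx}\ge\epsilon$ the truncation is inactive, so the untruncated energy estimate applies, $\int_0^1 u_{nx}^{-2}\,dx \le r_1$ there, and Lemma~\ref{key_lemma} gives $u_{nx}\ge\delta$ with $\delta$ determined only by $r_1,r_2$ (hence independent of $T'$); choosing $\epsilon<\delta$ from the outset forces $T'=T$. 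You acknowledge the circularity in your closing paragraph, but the bootstrap should be made explicit; with it, the proof is complete.
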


We prove this Proposition by applying the Galerkin method. 
Since $X$ is a separable Hilbert space, we can take a complete orthonormal system $\{\hat{z}_i \}$ and denote its normalization in $H$ by $\{z_i \}$. Also, let
$X_n$ be a subspace generated by $\{z_i | i= 1, 2, \cdots, n \}$. 
First, we assume that $u_0 \in X$ with $u_{0x} > 0$ on $[0,1]$, $v_0 \in X$ and $\hat{p} \in L^2(0, T; X)$. It is easy to see that 
\begin{equation}  \left. 
\begin{array}{l} 
u_{0n} := \sum_{i=1}^n (u_0, \hat{z}_i)_X \hat{z}_i \to u_0, v_{0n} := \sum_{i=1}^n (v_0, \hat{z}_i)_X \hat{z}_i \to v_0 \mbox{ in } X, \\[0.2cm]
\hat{p}_{n}:= \sum_{i=1}^n( \hat{p}, \hat{z}_i)_X  \hat{z}_i  \to \hat{p} \mbox{ in } L^2(0, T; X) 
\end{array} \right\} 
 \mbox{ as } n \to  \infty,  \label{def_ap}
 \end{equation}
and $u_{0n}, v_{0n} \in X_n$ and $\hat{p}_n \in X_n$ on $[0,T]$ for each $n$. 
 Since $C^1([0,1]) \subset X$,  we have $u_{0nx} \to u_{0x}$ in $C([0,1])$ as $n \to \infty$ so that there exist $\delta > 0$ and $n_0$ such that 
 $u_{0nx} \geq \delta$ on $[0,1]$ for $n \geq n_0$. Moreover, let $g \in  C([0,T])$ and put 
 $$ u_{0n} = \sum_{i=1}^n a_{0i}^{(n)} z_i,  v_{0n} = \sum_{i=1}^n \alpha_{0i}^{(n)} z_i \mbox{ for } n \geq n_0, $$
 where $a_i^{(n)}$ and  $\alpha_i^{(n)}$ are constants. 
 
 By using these notation we introduce the following problem for $n \geq n_0$. 
 The  problem (AP1)$_n :=$ (AP1)$_n(u_{0n}, v_{0n}, \hat{p}_n, g)$ is to find a function $u_n = \sum_{i=1}^n a_i^{(n)} z_i$, 
 $a_i ^{(n)} \in C^2([0,T])$,  such that 
 \begin{align}
 & \ m(u_{ntt}(t), z_i)_H + \gamma (u_{nxx}(t), z_{ixx})_H +  ( f(u_{nx}(t)), z_{ix})_H + k_v (u_{ntx}(t), z_{ix})_H + (\hat{p}_n(t), z_{ix})_H \nonumber \\
 = & \  g(t)  z_i(1) \mbox{ for  }  t \in [0,T] \mbox{ for } i = 1,  2, \cdots, n, \label{ODE1}  \\  
  & u_{n}(0) = u_{0n}, u_{nt}(0) = v_{0n}.  \label{ODE2} 
 \end{align}
 
 The next Lemma guarantees existence of solutions of (AP1)$_n$. 
\begin{lem} \label{exist_ODE} 
For $n \geq n_0$ (AP1)$_n(u_{0n}, v_{0n}, \hat{p}_n, g)$ has a solution 
$u_n \in C^2([0,T]; X)$ satisfying $u_n(t) \in X_n$ for $t \in [0,T]$. 
\end{lem}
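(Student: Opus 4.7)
The plan is to reformulate (AP1)$_n$ as a second-order ODE system for the coefficient vector $a^{(n)}(t) = (a_1^{(n)}(t), \dots, a_n^{(n)}(t))$ and then combine a standard local existence result with an energy-type estimate and Lemma \ref{key_lemma} to exclude the only possible blow-up mechanism, namely $u_{nx}$ touching the singular locus of $f$. Since $\{z_i\}_{i=1}^n$ is orthonormal in $H$, testing \eqref{ODE1} against $z_i$ converts it into $m(a_i^{(n)})''(t) = G_i(a^{(n)}(t), (a^{(n)})'(t), t)$, where all summands on the right-hand side are analytic in their state arguments, save for the nonlinear contribution
\[ \mathcal{N}_i(a^{(n)}) = (f(u_{nx}), z_{ix})_H = \tfrac{k}{2}\int_0^1 \Bigl(u_{nx} - \tfrac{1}{2} - \tfrac{1}{2 u_{nx}^3}\Bigr)\,z_{ix}\,dx, \]
which is $C^{\infty}$ in $a^{(n)}$ only on the open subset $U \subset \mathbb{R}^n$ on which $u_{nx} = \sum_j a_j^{(n)} z_{jx}$ is strictly positive on $[0,1]$. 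The hypothesis $n \geq n_0$ guarantees $u_{0nx} \geq \delta$ on $[0,1]$, placing $a_0^{(n)}$ in $U$, so Picard--Lindel\"of supplies a unique $C^2$-solution on a maximal interval $[0, T^*_n)$ with $T^*_n \in (0, T]$.

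To rule out $T^*_n < T$, I would derive a uniform energy estimate by multiplying \eqref{ODE1} by $(a_i^{(n)})'(t)$ and summing over $i$. Because $u_n(t,0) = u_{nt}(t,0) = 0$ and the basis lies in $X$, integration by parts yields
\[ \frac{d}{dt}\Bigl(\tfrac{m}{2}|u_{nt}|_H^2 + \tfrac{\gamma}{2}|u_{nxx}|_H^2 + \int_0^1 \hat f(u_{nx})\,dx\Bigr) + k_v|u_{ntx}|_H^2 = g(t)\,u_{nt}(t,1) - (\hat p_n, u_{ntx})_H, \]
with $\hat f(r) = \tfrac{k}{2}\bigl(\tfrac{r^2}{2} - \tfrac{r}{2} + \tfrac{1}{4r^2}\bigr)$ a primitive of $f$ on $(0,\infty)$. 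The trace bound $|u_{nt}(t,1)| \leq |u_{ntx}|_H$ (from $u_{nt}(t,0) = 0$) and Cauchy--Schwarz allow the right-hand side to be absorbed into $\tfrac{k_v}{2}|u_{ntx}|_H^2 + C(|g(t)|^2 + |\hat p_n(t)|_H^2)$, and Gronwall's inequality then produces bounds on $|u_{nt}(t)|_H$, $|u_{nxx}(t)|_H$, and $\int_0^1 \hat f(u_{nx}(t,x))\,dx$ on $[0, T^*_n)$ independent of $T^*_n$. Orthonormality of $\{z_i\}$ in $H$ transfers these into uniform bounds on $|a^{(n)}(t)|$ and $|(a^{(n)})'(t)|$, eliminating the blow-up of the coefficients as a possible failure mode.

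The main obstacle, and the reason for introducing the singular elastic response and for proving Lemma \ref{key_lemma}, is excluding the remaining possibility that $u_{nx}$ drifts to zero inside $[0, T^*_n)$. The coercivity $\hat f(r) \geq \tfrac{k}{16 r^2} - C$ on $(0,\infty)$, applied to the energy bound, gives $\int_0^1 u_{nx}(t,x)^{-2}\,dx \leq r_1$ uniformly on $[0, T^*_n)$, while $|u_{nxx}|_H$ bounded together with $u_n(t,0) = 0$ yields $|u_n(t)|_X \leq r_2$. Lemma \ref{key_lemma} therefore forces $u_{nx}(t,x) \geq \tfrac{r_2}{\sqrt{2}} e^{-r_1 r_2^2} > 0$ uniformly on $[0, T^*_n) \times [0,1]$, keeping $a^{(n)}(t)$ inside a compact subset of $U$ for the entire life of the local solution. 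The standard continuation criterion for ODEs then forces $T^*_n = T$, and we conclude $a_i^{(n)} \in C^2([0,T])$ and $u_n \in C^2([0,T]; X_n) \subset C^2([0,T]; X)$ as claimed.
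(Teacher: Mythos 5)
Your proof is correct and follows essentially the same route as the paper: reduce to an ODE system for the Galerkin coefficients, obtain local existence from local Lipschitz continuity of the right-hand side on the open set where $u_{nx}>0$, multiply by $u_{nt}$ to obtain an energy estimate that bounds $|u_{nt}|_H$, $|u_{nxx}|_H$, and $\int_0^1 u_{nx}^{-2}\,dx$ uniformly in time, and then invoke Lemma~\ref{key_lemma} to get a positive lower bound on $u_{nx}$, which rules out finite-time blow-up and extends the solution to $[0,T]$. One small slip: $\{z_i\}$ is obtained by normalizing in $H$ an orthonormal system of $X$, so it is orthogonal in $X$ but not generally orthonormal in $H$; the transfer of $H$-bounds to coefficient bounds nevertheless holds because, for fixed $n$, the Gram matrix $\bigl((z_i,z_j)_H\bigr)_{i,j\le n}$ is positive definite.
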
  
\begin{proof}
By substituting $u_n = \sum_{i=1}^n a_i^{(n)} z_i$ into \eqref{ODE1} we obtain the following ordinary differential equations for 
$a^{(n)} = (a_1^{(n)}, \cdots, a_n^{(n)})$: 
\begin{equation}
\left\{
\begin{array}{l}
\displaystyle \frac{d^2}{dt^2} a^{(n)} = - A a^{(n)} -  F(a^{(n)}, \frac{d}{dt} a^{(n)} ) + G \quad \mbox{ on } [0, T] , \\
\displaystyle  a^{(n)}(0) = (a_{01}^{(n}, \cdots, a_{0n}^{(n)}), \frac{d}{dt} a^{(n)}(0) = (\alpha_{01}^{(n}, \cdots, \alpha_{0n}^{(n)}), 
\end{array}
\right. \nonumber
\end{equation}
where $A$ is a ($n \times n$)-matrix with components $(z_{ixx}, z_{jxx})_H$, $F: {\mathbb R}^{2n} \to {\mathbb R}^n$ and 
$G \in C([0,T]; {\mathbb R}^n)$ are given by 
$\displaystyle F( a^{(n)}, \frac{d}{dt} a^{(n)}) =  ( (f( \sum_{i=1}^n a_i^{(n)} z_{ix}) + k_v \sum_{i=1}^{(n)} \frac{d}{dt} a_i^{(n)} z_{ix} , z_{jx})_H)_{j}$, 
and  \\ $G(t) = (g(t) z_j(1) - (\hat{p}_n(t), z_j)_H)_{j} $ for $t \in [0,T]$.  Since $F$ is locally Lipschitz continuous and $u_{0nx} \geq \delta$ on $[0,1]$, it is clear that a unique solution $u_n$ of (AP1)$_n$ with $u_{nx} > 0$ on $\overline{Q(T')}$ exists on $[0,T']$ for some 
$0 < T' \leq T$. In order to prove existence of the solution on the whole interval $[0,T]$, it is sufficient to show that there exist $\delta' > 0$ and $M > 0$ which are independent of $T'$ such that 
\begin{equation}
\delta' \leq u_{nx} \leq M  \mbox{ on } Q(T'). \label{lem5-1-a}
\end{equation}
Since we can substitute $u_{nt}$ instead of $z_i$ in \eqref{ODE1}, it yields that 
$$
\begin{array}{ll}
& \displaystyle 
 \frac{m}{2} \frac{d}{dt} |u_{nt}(t)|_H^2 + \frac{\gamma}{2}  \frac{d}{dt}  |u_{nxx}(t)|_H^2 + (f(u_{nx}(t)) + k_v u_{ntx}(t), u_{tx}(t))_H  + (\hat{p}_n(t), u_{ntx}(t))_H 
 \\[0.2cm]
= & g(t)u_{nt}(t,1)  \quad \mbox{ for } t \in [0, T']. 
\end{array} 
$$
Here, we note that
$$ 
(f(u_{nx}(t)), u_{tx}(t))_H = \frac{k}{4}  \frac{d}{dt} ( |u_{nx}(t)|_H^2 - |u_{nx}(t)|_{L^1(0,1)} + \frac{1}{2} \int_0^1 \frac{1}{|u_{nx}(t)|^2} dx) 
          \mbox{ for } t \in [0, T'].  
$$ 
Accordingly, we see that 
\begin{align}
& \frac{d}{dt} \left( \frac{m}{2}  |u_{nt}|_H^2 + \frac{\gamma}{2}   |u_{nxx}|_H^2 +
 \frac{k}{4} (|u_{nx}|_H^2 - |u_{nx}|_{L^1(0,1)} + \frac{1}{2} \int_0^1 \frac{1}{|u_{nx}|^2} dx) \right) +  k_v |u_{ntx}|_H^2  \nonumber  \\
 \leq & |\hat{p}_n|_H |u_{ntx}|_H + |g| |u_{ntx}|_H  \quad   \mbox{ on }  [0, T'],  \nonumber
\end{align}
and 
\begin{align}
&  \frac{m}{2}  |u_{nt}(t)|_H^2 + \frac{\gamma}{2}   |u_{nxx}(t)|_H^2 +
 \frac{k}{4} (|u_{nx}(t)|_H^2 + \frac{1}{2} \int_0^1 \frac{1}{|u_{nx}(t)|^2} dx ) +  \frac{k_v}{2} \int_0^t |u_{ntx}|_H^2 d\tau \nonumber  \\
 \leq & \frac{m}{2}  |v_{0n}|_H^2 + \frac{\gamma}{2}   |u_{0nxx}|_H^2 + 
 \frac{k}{4} (|u_{0nx}|_H^2  + \frac{1}{2} \int_0^1 \frac{1}{|u_{0nx}|^2} dx  +    |u_{nx}(t)|_{L^1(0,1)})  \label{5-1-x}\\
 & + \frac{1}{k_v} \int_0^t  (|\hat{p}_n|_H^2  +  |g|^2) d\tau 
 \quad   \mbox{ for } t \in [0,  T'].  \nonumber 
\end{align}
Hence, there exists a positive constant $C$ independent of $T'$ such that 
$$  |u_{nt}(t)|_H^2 +   |u_{nxx}(t)|_H^2 + |u_{nx}(t)|_H^2 + \int_0^1 \frac{1}{|u_{nx}(t)|^2} dx  \leq 
C  \quad   \mbox{ for } t \in [0,  T']. $$
By applying Lemma \ref{key_lemma}, we can get \eqref{lem5-1-a} for some $\delta' > 0$ and $M > 0$ which are independent of $T'$. 
Thus, we can show existence of $u_n \in C^2([0,T]; X)$ satisfying $u_n(t) \in X_n$ for $t \in [0,T]$,  \eqref{ODE1} and \eqref{ODE2}. 

\end{proof}

To move on with the proof of Proposition \ref{exist_weak_P1}, we need the estimate for the difference of {the} solutions. 
\begin{lem} For 
$i = 1, 2$  let $g_i \in C([0,T])$, $u_{in}$ be a solution of (AP1)$_n(u_{0n}, v_{0n}, \hat{p}_n, g_i)$ for $i = 1, 2$ and $n \geq n_0$, 
and $u_n = u_{1n} - u_{2n}$. 
There exist a positive constant $C_*$ depending on the lower bound of $u_{inx}$ and $|u_{in}|_{L^{\infty}(0,T; X)}$ for $i = 1, 2$ such that 
\begin{align}
 |u_{nt}(t)|_H^2 + |u_{nxx}(t)|_H^2 + |u_{nx}(t)|_H^2 + \int_0^t |u_{ntx}(\tau)|_H^2 d\tau 
\leq C_* \int_0^t |g_1 - g_2|^2 d\tau \quad \mbox{ for } t \in [0, T].  \label{diff_P1_lem} 
\end{align} 
\end{lem}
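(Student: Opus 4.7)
The plan is to subtract the two Galerkin systems, use $u_{nt}(t)\in X_n$ as an admissible test function in the resulting difference equation, and derive a Gronwall-type energy inequality in which the right-hand side is driven only by $|g_1-g_2|^2$. Since both problems share the initial data $u_{0n},v_{0n}$ and the same $\hat{p}_n$, the pressure term and initial contributions cancel, and what remains is a linear-in-$u_n$ equation with a quasi-linear elastic response perturbation and a boundary source $(g_1-g_2)z_i(1)$.

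First I would take \eqref{ODE1} for $g=g_1,g=g_2$ and subtract, obtaining for every $i\leq n$
\[
m(u_{ntt},z_i)_H+\gamma(u_{nxx},z_{ixx})_H+(f(u_{1nx})-f(u_{2nx}),z_{ix})_H+k_v(u_{ntx},z_{ix})_H=(g_1-g_2)\,z_i(1).
\]
Because $u_{nt}(t)\in X_n$, multiplying by the coefficients of $u_{nt}$ and summing justifies the substitution $z_i\mapsto u_{nt}$. The elastic nonlinearity is rewritten, as in \eqref{5q-2}, as
\[
f(u_{1nx})-f(u_{2nx})=\tfrac{k}{2}\,u_{nx}+\tfrac{k}{4}\,F\,u_{nx},\qquad F=\frac{u_{1nx}^2+u_{1nx}u_{2nx}+u_{2nx}^2}{u_{1nx}^3u_{2nx}^3},
\]
so the first summand generates the \emph{good} derivative $\tfrac{k}{4}\tfrac{d}{dt}|u_{nx}|_H^2$, while $F$ is bounded by a constant $C(\delta',M)$ thanks to the lower bound $u_{inx}\geq\delta'$ and the $L^\infty(0,T;X)$ bound on $u_{in}$ inherited from \eqref{5-1-x}.

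Assembling these identities yields
\[
\frac{d}{dt}\Bigl(\tfrac{m}{2}|u_{nt}|_H^2+\tfrac{\gamma}{2}|u_{nxx}|_H^2+\tfrac{k}{4}|u_{nx}|_H^2\Bigr)+k_v|u_{ntx}|_H^2=-\tfrac{k}{4}(Fu_{nx},u_{ntx})_H+(g_1-g_2)\,u_{nt}(t,1).
\]
Since $u_n(t,0)=0$ implies $u_{nt}(t,0)=0$, I would bound $|u_{nt}(t,1)|\leq|u_{ntx}|_H$ and apply Young's inequality with weight $k_v/4$ to both terms on the right, producing
\[
\frac{d}{dt}E_n(t)+\tfrac{k_v}{2}|u_{ntx}|_H^2\leq C_1\,E_n(t)+C_2|g_1(t)-g_2(t)|^2,
\]
where $E_n:=\tfrac{m}{2}|u_{nt}|_H^2+\tfrac{\gamma}{2}|u_{nxx}|_H^2+\tfrac{k}{4}|u_{nx}|_H^2$ and $C_1,C_2$ depend only on $\delta'$, $M$, $k$, $k_v$ and $\gamma$. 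Because both solutions share the initial data, $E_n(0)=0$, so Gronwall's inequality gives $E_n(t)\leq C_*\int_0^t|g_1-g_2|^2\,d\tau$, and integrating the differential inequality once more absorbs the dissipation term $\int_0^t|u_{ntx}|_H^2\,d\tau$ into the same bound. Combining these estimates yields \eqref{diff_P1_lem}.

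I expect the only non-routine point to be the algebraic identity for $f(u_{1nx})-f(u_{2nx})$: one must isolate the purely quadratic "energetic" contribution $\tfrac{k}{4}\tfrac{d}{dt}|u_{nx}|_H^2$ so that the remaining $F$-term, which mixes $u_{nx}$ and $u_{ntx}$, can be absorbed by the viscoelastic dissipation $k_v|u_{ntx}|_H^2$; this is precisely where the dependence of $C_*$ on the lower bound of $u_{inx}$ and on $|u_{in}|_{L^\infty(0,T;X)}$ enters, through the uniform bound on $F$.
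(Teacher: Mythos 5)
Your argument is correct and follows essentially the same route as the paper: test the difference equation with $u_{nt}\in X_n$, isolate the energetic part $\tfrac{k}{4}\tfrac{d}{dt}|u_{nx}|_H^2$ via the factorization $f(u_{1nx})-f(u_{2nx})=\tfrac{k}{2}u_{nx}+\tfrac{k}{4}Fu_{nx}$, bound $F$ uniformly from the lower bound $\delta'$ and upper bound $M$ on $u_{inx}$, absorb the cross term and the boundary source by the viscoelastic dissipation, and close with Gronwall using $E_n(0)=0$. The paper writes the same decomposition in the slightly different notation $\frac{k}{4}\bigl(\frac{u_{1nx}^3-u_{2nx}^3}{(u_{1nx}u_{2nx})^3},u_{ntx}\bigr)_H$ and applies Gronwall to the combined quantity including $\int_0^t|u_{ntx}|_H^2$, but the estimates and constants are identical in substance.
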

\begin{proof}
Let $u_{jn}$ be a solution of (AP1)$_n(u_{0n}, v_{0n}, \hat{p}_n, g_j)$ for $j = 1, 2$ and $n \geq n_0$ and  put $u_n = u_{1n} - u_{2n}$ and 
$g = g_1 - g_2$.
Since $u_{nt}(t) \in X_n$ for $t \in [0,T]$, by \eqref{ODE1} we see that 
 \begin{align}
 & \ m(u_{ntt}, u_{nt})_H + \gamma (u_{nxx}, u_{ntxx})_H +  ( f(u_{1nx})  - f(u_{2nx}), u_{nxt})_H 
        + k_v |u_{ntx}|_H^2  \nonumber \\
 = & \  g u_{nt}(\cdot,1)  \nonumber \\
 \leq & \frac{k_v}{4} |u_{ntx}|_H^2  + \frac{1}{ k_v} |g|^2  \mbox{ on  }  [0,T]. \label{5z-1}  
 \end{align}
 Here, we note that 
 \begin{align*}
( f(u_{1nx})  - f(u_{2nx}), u_{nxt})_H  
 = & \frac{k}{4}  \frac{d}{dt} |u_x|_H^2 + \frac{k}{4} (\frac{u_{1nx}^2 - u_{2nx}^2}{ |u_{1nx}u_{2nx}|^3}, u_{ntx})_H \\
 \geq &  \frac{k}{4} \frac{d}{dt}  |u_x|_H^2 - \frac{k_v}{4} |u_{ntx}|_H^2 -  \frac{k^2M^4}{k_v|\delta'|^{12}} |u_{nx}|_H^2,  
 \end{align*}
where $\delta'$ and $M$ are positive constants used in the proof of Lemme \ref{exist_ODE}   and then we have 
 \begin{align}
 & \  \frac{d}{dt} \left( \frac{m}{2} |u_{nt}|_H^2  + \frac{\gamma}{2}  |u_{nxx}|_H^2 + \frac{k}{4} |u_x|_H^2 \right) 
 +  \frac{k_v}{2} |u_{ntx}|_H^2  \nonumber \\
 \leq &\  \frac{k^2M^4}{k_v  |\delta'|^{12}} |u_x|_H^2 + \frac{1}{ k_v} |g|^2 \mbox{ on }   [0,T]. \nonumber
\end{align}
Hence, Gronwall's inequality implies 
 \begin{align}
  \frac{m}{2} |u_{nt}(t)|_H^2  + \frac{\gamma}{2}  |u_{nxx}(t)|_H^2 + \frac{k}{4}  |u_x(t)|_H^2
 +  \frac{k_v}{2} \int_0^t |u_{ntx}|_H^2 d\tau  
 \leq  \frac{e^{C_1 t}}{ k_v} \int_0^t |g|^2  d\tau \mbox{ for } t \in  [0,T],  \nonumber 
\end{align}
where $\displaystyle C_1= \frac{4kM^4}{k_v  |\delta'|^{12}}$. Thus, we have proved this Lemma. 
\end{proof}

Next, we consider the solvability of the following auxiliary problem (AP1) $:=$ (AP1)$(u_0, v_0, g, q)$ in  a weak sense: 
\begin{align}
& m u_{tt}  + \gamma u_{xxxx} -  ( f(u_{x}) + k_v u_{xt})_x =  g_{x}   \mbox{ in } Q(T),  \label{5m-1} \\
& u(t, 0) = 0, u_{xx}(t,0) = u_{xx}(t,1) = 0 \mbox{ for } t \in [0,T],   \label{5m-2} \\
&  - \gamma u_{xxx}(t,1) +  f(u_{x})(t,1) + k_v u_{xt}(t,1) + g(t, 1)  = q(t) \mbox{ for } t \in [0,T],    \label{5m-3}\\
&  u(0) = u_{0}, u_t(0) = v_{0}.  \nonumber
\end{align}

Here, we give a definition of a weak solution to (AP1). 
\begin{dfn} 
We call that  $u$ is  a weak solution of (AP1)$(u_0, v_0, g, q)$ on $[0,T]$, if $u$ satisfies (W1), (W2) and 
\begin{align}
& \displaystyle  m \int_0^T (u_t, \eta_t)_H dt + \gamma \int_0^T (u_{xx}, \eta_{xx})_H dt + \int_0^T (f(u_{x}) + k_v u_{tx}, \eta_x)_H dt 
    \nonumber \\[0.2cm]
= & \displaystyle   \int_0^T q(t) \eta(t,1) dt - \int_0^T (g, \eta_x)_H dt + m(v_0, \eta(0))_H  \label{def_AP1}\\[0.2cm]
 & \qquad \mbox{ for } \eta \in  W^{1,2}(0, T; H) \cap L^{\infty}(0,T; X) \mbox{ with } \eta(T) = 0. \nonumber
\end{align}
 
\end{dfn}

\begin{lem} \label{AP1}
If $\hat{p} \in L^2(0, T; H)$, $g \in L^2(0,T)$, $u_0 \in X$ with $u_{0x} > 0$ on $[0,1]$ and $v_0 \in H$, then there exists a unique weak solution $u$ of (AP1)$(u_0, v_0, \hat{p}, g)$ on  $[0,T]$, and the following inequality holds: 
\begin{align}
&  \frac{m}{2}  |u_{t}(t)|_H^2 + \frac{\gamma}{2}   |u_{xx}(t)|_H^2 +
 \frac{k}{8} (|u_{x}(t)|_H^2 +  \int_0^1 \frac{1}{|u_{x}(t)|^2} dx ) +  \frac{k_v}{2} \int_0^t |u_{tx}|_H^2 d\tau \nonumber  \\
 \leq & \frac{m}{2}  |v_{0}|_H^2 + \frac{\gamma}{2}   |u_{0xx}|_H^2 + 
 \frac{k}{4} (|u_{0x}|_H^2  + \frac{1}{2} \int_0^1 \frac{1}{|u_{0x}|^2} dx  +   \frac{1}{2} )  \label{u-ineq}\\
 & + \frac{1}{k_v} \int_0^t  (|\hat{p}|_H^2  +  |g|^2) d\tau 
 \quad   \mbox{ for } t \in [0,  T].  \nonumber 
\end{align}
Moreover, for $i = 1, 2$ let  $g_i \in L^2(0, T)$, $u_i$ be a weak solution of (AP1)$(u_0, v_0, \hat{p}, g_i)$ on  $[0,T]$ and 
$u = u_1 - u_2$
There exist a positive constant $C_*$ depending on the lower bound of $u_{ix}$ and $|u_i|_{L^{\infty}(0,T; X)}$ for $i = 1, 2$ such that 
\begin{align}
 |u_t(t)|_H^2 + |u_{xx}(t)|_H^2 + |u_x(t)|_H^2 + \int_0^t |u_{tx}(\tau)|_H^2 d\tau 
\leq C_* \int_0^t |g_1 - g_2|^2 d\tau \quad \mbox{ for } t \in [0, T].  \label{diff_P1} 
\end{align} 
\end{lem}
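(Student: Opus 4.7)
The argument proceeds by Galerkin approximation, leveraging the framework already established in Lemma~\ref{exist_ODE}. The plan is to first choose $\{u_{0n}\}, \{v_{0n}\} \subset X_n$ with $u_{0n} \to u_0$ in $X$ (so that $u_{0nx} \geq \delta_0 > 0$ on $[0,1]$ for $n$ large enough) and $v_{0n} \to v_0$ in $H$, together with $\hat{p}_n \in L^2(0,T; X)$ satisfying $\hat{p}_n \to \hat{p}$ in $L^2(0,T; H)$ and $g_n \in C([0,T])$ with $g_n \to g$ in $L^2(0,T)$. Lemma~\ref{exist_ODE} then supplies Galerkin solutions $u_n \in C^2([0,T]; X_n)$ of (AP1)$_n(u_{0n}, v_{0n}, \hat{p}_n, g_n)$.

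Next, testing the Galerkin identity by $u_{nt}(t) \in X_n$, controlling the boundary term $g_n(t)\,u_{nt}(t,1)$ by $|g_n(t)|\,|u_{ntx}(t)|_H$ (since $u_{nt}(t,0)=0$) and the volumetric term $(\hat{p}_n, u_{ntx})_H$ by Young's inequality against $k_v|u_{ntx}|_H^2$, I would reproduce the energy inequality \eqref{5-1-x} uniformly in $n$. The linear contribution $\tfrac{k}{4}|u_{nx}|_{L^1(0,1)}$ on the right is absorbed using $|u_{nx}|_{L^1(0,1)} \leq |u_{nx}|_H \leq \tfrac{1}{2}|u_{nx}|_H^2 + \tfrac{1}{2}$; this produces the factor $\tfrac{k}{8}$ (rather than $\tfrac{k}{4}$) appearing in \eqref{u-ineq}, after moving half of the $|u_{nx}|_H^2$ mass to the left. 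The resulting uniform bounds on $|u_n|_{L^{\infty}(0,T; X)}$, $|u_{nt}|_{L^{\infty}(0,T; H)}$, $|u_{ntx}|_{L^2(0,T;H)}$ and $\int_0^1 1/|u_{nx}(t)|^2\,dx$ together trigger Lemma~\ref{key_lemma}, producing a lower bound $u_{nx} \geq \delta > 0$ independent of $n$.

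By Aubin--Lions compactness I would extract a subsequence with $u_n \to u$ weakly-$*$ in $L^{\infty}(0,T; X)$, $u_{nt} \to u_t$ weakly-$*$ in $L^{\infty}(0,T; H)$ and weakly in $L^2(0,T; H^1(0,1))$, and $u_{nx} \to u_x$ uniformly on $\overline{Q(T)}$. The uniform lower bound $\delta$ then forces $f(u_{nx}) \to f(u_x)$ uniformly, which is the single place where the singularity of $f$ at $0$ matters — this is what makes Lemma~\ref{key_lemma} decisive. Multiplying the Galerkin identity by a time profile $\phi \in C^1([0,T])$ with $\phi(T) = 0$, keeping a fixed finite collection of modes, and letting $n \to \infty$ yields the weak identity against any element of $\bigcup_{n_1} X_{n_1} \otimes C^1_0([0,T))$; density in the test space of Definition~\ref{def_weak_P1} extends it to all admissible $\eta$, and the term $m(v_0, \eta(0))_H$ arises naturally from the time integration by parts, so $u_t(0) = v_0$ is encoded, while $u(0) = u_0$ holds pointwise since $u \in C([0,T]; H)$. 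Weak lower semicontinuity of norms transports the Galerkin energy bound into \eqref{u-ineq}.

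For the difference estimate \eqref{diff_P1} and uniqueness, I would apply \eqref{diff_P1_lem} at the Galerkin level to pairs $u_{1n}, u_{2n}$ built from the same approximations of $u_0$, $v_0$, $\hat{p}$ but from $g_{1n} \to g_1$, $g_{2n} \to g_2$. Since $C_*$ in \eqref{diff_P1_lem} depends only on the (uniform) lower bound of $u_{inx}$ and on $|u_{in}|_{L^{\infty}(0,T; X)}$, passing to the limit yields \eqref{diff_P1} for the constructed weak solutions; setting $g_1 = g_2$ gives uniqueness within this class. The chief obstacle throughout is preserving the strict positivity of $u_x$ at every level — without it $f(u_x)$ is undefined, compactness of the nonlinear term fails, and the difference estimate breaks down — and it is Lemma~\ref{key_lemma}, coupled with the $\tfrac{k}{8}\int_0^1 1/|u_x|^2\,dx$ contribution in \eqref{u-ineq}, that supplies this positivity at the Galerkin stage and preserves it in the limit.
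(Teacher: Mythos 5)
Your existence argument matches the paper's: Galerkin approximation via Lemma~\ref{exist_ODE}, the a priori bound \eqref{5-1-x}, Lemma~\ref{key_lemma} to maintain $u_{nx}\geq\delta>0$, compactness, and passage to the limit (the paper handles the rough data $v_0\in H$, $\hat p\in L^2(0,T;H)$, $g\in L^2(0,T)$ via a second approximation layer, while you merge the two layers, but both routes work). Your bookkeeping of the $\tfrac{k}{8}$ factor via Young's inequality on the $L^1$ term is exactly how \eqref{u-ineq} arises from \eqref{5-1-x}.

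Where your proposal breaks down is the uniqueness claim and, consequently, the difference estimate. You yourself hedge with ``uniqueness within this class,'' but the lemma asserts uniqueness among \emph{all} weak solutions in $W_1(T)$ satisfying (W1)--(W3), and the difference estimate \eqref{diff_P1} is stated for arbitrary weak solutions $u_1,u_2$, not just Galerkin limits. Passing \eqref{diff_P1_lem} to the limit only controls $u_1-u_2$ when both are obtained from your particular approximation scheme; nothing in that argument excludes a weak solution produced some other way. The obstruction to a direct energy argument for uniqueness is that a general weak solution in $W_1(T)$ lacks the time regularity to test the weak identity against $u_t$ (one would need $u_{tt}$), so the usual Gronwall trick is unavailable. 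This is exactly why the paper proves uniqueness separately via Proposition~\ref{uni_weak_P1}, which uses the dual (adjoint) equation method to compensate for the missing regularity. Once that proposition gives uniqueness, every weak solution \emph{is} the Galerkin limit, and only then does your limit argument legitimately upgrade \eqref{diff_P1_lem} to \eqref{diff_P1} for arbitrary weak solutions. You should therefore invoke Proposition~\ref{uni_weak_P1} explicitly rather than attempting to extract uniqueness from the Galerkin machinery alone.
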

\begin{proof} 
Assume $v_0 \in X$, $\hat{p} \in L^2(0, T; X)$ and $g \in C([0,T])$.  
From the above argument we then have a solution $u_n$ of  
(AP1)$_n(u_{0n}, v_{0n}, \hat{p}_n, g)$  on $[0,T]$ for $n \geq n_0$. 
From \eqref{5-1-x} and Lemma \ref{key_lemma} it follows that 
$\{u_n\}$ is bounded in $W^{1,\infty}(0, T; H)$,  $L^{\infty}(0,T; X)$  and  $W^{1,2}(0,T; H^1(0,1))$,  and 
there exist positive constant $\delta'$ and $M$ such that 
$\delta' \leq u_{nx} \leq M$ on $\overline{Q(T)}$ for $n \geq n_0$. Hence, we can take a subsequence $\{n_j\}$ 
and $u$ satisfying (W1), (W2) such that 
$u_{n_j} \to u$ weakly* in $W^{1,\infty}(0, T; H)$ and $L^{\infty}(0, T; X)$, weakly in $W^{1,2}(0, T; H^1(0,1))$, and 
$u_{n_jx} \to u_x$ in $C(\overline{Q(T)})$ as $j \to \infty$. By letting $\eta \in W^{1,2}(0, T; X)$ with $\eta(T) = 0$, we choose 
a sequence $\{\eta_n\} \subset W^{1,2}(0, T; X)$ such that $\eta_n \in X_n$ on $[0,T]$ and $\eta_n(T) = 0$ for $n$,  and 
$\eta_n \to \eta$ in $W^{1,2}(0, T; X)$.  Hence, we can substitute $\eta_n(t)$ into \eqref{ODE1} as $z_i$,  integrate it over $[0,T]$, and 
then we can get \eqref{def_AP1} for $\eta \in W^{1,2}(0, T; X)$ with $\eta(T) = 0$, easily.  Next, let $\eta \in W^{1,2}(0, T; H) \cap L^{\infty}(0,T; X)$ with $\eta(T) = 0$ and take a sequence  $\{ \eta_j \} \subset W^{1,2}(0, T; X)$ such that 
$\eta_j \to \eta$ in $W^{1,2}(0, T; H)$ and $L^{2}(0,T; X)$ as $j \to \infty$.  Hence, it is easy to see that  \eqref{def_AP1} holds
for $\eta \in W^{1,2}(0, T; H) \cap L^{\infty}(0,T; X)$ with $\eta(T) = 0$. Also, by \eqref{5-1-x} and these convergences we obtain \eqref{u-ineq}. 

Finally, let $v_0 \in H$,  $\hat{p} \in L^2(0, T; H)$ and $g \in L^2(0,T)$.  
Since we can choose sequences $\{v_{0i}\} \subset X$, $\{\hat{p}_i\} \subset  L^2(0, T; X)$ and $\{g_i\} \subset C([0,T])$ 
such that $v_{0i} \to v_0$ in $H$, $\hat{p}_i \to \hat{p}$ in $L^2(0, T; H)$ and $g_i \to g$ in $L^2(0,T)$
as $i \to \infty$, (AP1)$(u_0, v_{0i}, \hat{p}_i, g_i)$ has a weak solution $u_i$ for each $i$. Thanks to \eqref{u-ineq},  we see that 
$\{u_i\}$ is bounded in  $W^{1,\infty}(0, T; H)$,  $L^{\infty}(0,T; X)$  and  $W^{1,2}(0,T; H^1(0,1))$,  and 
there exist positive constant $\delta''$ and $M'$ such that 
$\delta'' \leq u_{ix} \leq M'$ on $\overline{Q(T)}$ for $i$. Hence, in a similar way we can show that (AP1)$(u_0, v_{0}, \hat{p}, g)$ has a weak solution $u$ and  \eqref{u-ineq} holds for it. 

The uniqueness of weak solutions of (AP1) is a direct consequence of Proposition \ref{uni_weak_P1}.  
Also, \eqref{diff_P1} is easily obtained from  \eqref{5-1-x} and convergence of $\{u_n\}$. 
\end{proof}

\begin{proof}[Proof of Proposition \ref{exist_weak_P1}] 
Thanks to Lemma \ref{AP1} for any $\hat{u} \in L^2(0,T)$, a solution $u$ of \\ (AP1)$(u_0, v_0, \nu({\hat{p}}),  -\varphi(\hat{u}))$ on $[0,T]$ exists. 
Hence,  we  can define a mapping $\Lambda:  L^2(0,T) \to L^2(0,T)$ by $\Lambda \hat{u} = u(\cdot,1)$. 
In order to show that $\Lambda$ has a fixed point we put 
$$ K(M, T) = \{z \in L^2(0, T) :  |z|_{L^2(0,T)}^2 \leq M\} \mbox{ for } M  >  0. $$

Firstly, we show that $\Lambda: K(M, T) \to K(M, T)$ for some $M > 0$. 
Let $M > 0$, $\hat{u} \in K(M,  T)$ and $u$ be a weak solution of (AP1)$(u_0, v_0, \nu({\hat{p}}),  -\varphi(\hat{u}))$ on $[0,T]$. 
By \eqref{u-ineq} in Lemma \ref{AP1} we see  that 
{
$$ |u_x(t)|_H^2 \leq  C_1 \int_0^t (|\varphi(\hat{u})|^2 + |\nu(\bar{p})|_H^2) d\tau + C_2 \mbox{ for } t \in [0,T], $$
where $C_1 =\frac{8}{k k_v}$ and $C_2$ is a positive constant determined by initial functions $u_0$ and $v_0$.  Accordingly, because of boundedness of $\varphi$ (see (A1)) we can take a positive constant $M$ such that 
$$ \left( C_1 \int_0^T (|\varphi(\hat{u})|^2 + |\nu(\bar{p})|_H^2) d\tau + C_2 \right) T \leq M. $$
Immediately, we have 
$$  \int_0^T |\Lambda \hat{u}(t,1)|^2 dt \leq M \quad \mbox{ for }  \hat{u} \in K(M,  T). $$
Thus, we obtain that  $\Lambda: K(M, T) \to K(M, T)$.  
 }
  
Next, we show that $\Lambda^l$ is the contraction mapping on $K(M,T)$ for some positive integer $l$. 
Indeed, let $\hat{u}_i \in K(M, T)$ and $u_i$ be a weak solution of  (AP1)$(u_0, v_0, \nu(\hat{p}),  -\varphi(\hat{u}_i))$:  on $[0,T]$ for $i = 1,  2$.  Also, we put 
$u = u_1 -  u_2$ and $\hat{u} = \hat{u}_1- \hat{u}_2$.
By \eqref{diff_P1} in Lemma \ref{AP1} we see  that 
\begin{align*}
 |u_t(t)|_H^2 + |u_{xx}(t)|_H^2 + |u_x(t)|_H^2 + \int_0^t |u_{tx}(\tau)|_H^2 d\tau 
\leq C_3 \int_0^t |\hat{u}|^2 d\tau \quad \mbox{ for } t \in [0, T], 
\end{align*}
$$ |\Lambda \hat{u}_1(t) - \Lambda \hat{u}_2(t)|^2 \leq |u_x(t)|_H^2 \leq  C_3 \int_0^t |\hat{u}|^2 d\tau \quad \mbox{ for } t \in [0, T],  \hat{u}_i \in K(M, T), i= 1, 2, $$ 
where $C_3$ is a positive constant. 
Hence, we see that 
$$ |\Lambda \hat{u}_1(t) - \Lambda \hat{u}_2(t)|^2 \leq  C_3 |\hat{u}|_{L^2(0,t)}^2, 
 |\Lambda \hat{u}_1- \Lambda \hat{u}_2|_{L^2(0,t)} ^2 \leq  C_3 |\hat{u}|_{L^2(0,t)}^2 t \mbox{ for } t \in [0,T], $$
and recursively, 
$$ |\Lambda ^l\hat{u}_1- \Lambda^l \hat{u}_2|_{L^2(0,t)} ^2 \leq \frac{ (C_3 t)^l}{l!}  |\hat{u}|_{L^2(0,t)}^2 \mbox{ for } t \in [0,T] 
\mbox{ and } l \geq 1.  $$
Therefore, for large $l$ since $\Lambda^l$ is a contraction on $K(M, T)$ so that (P1)($\hat{p}, u_0,  v_0$) has a weak solution. 
\end{proof} 

\vskip 12pt
In the rest of this section, we consider strong solutions to (P1)($\hat{p}, u_0,  v_0$). 
{
First, we solve the following linear problem (LP1)($u_0, v_0, g, q$):
\begin{align*}
& m u_{tt}  + \gamma u_{xxxx} -  k' u_{xx} -   k_v u_{txx} =  g_{x}   \mbox{ in } Q(T),  \nonumber \\
& u(t, 0) = 0, u_{xx}(t,0) = u_{xx}(t,1) = 0 \mbox{ for } t \in [0,T],   \nonumber \\
&  - \gamma u_{xxx}(t,1) +  k' u_x(t,1) + k_v u_{tx}(t,1) + g(t, 1)  = q(t) \mbox{ for } t \in [0,T],    \nonumber \\
&  u(0) = u_{0}, u_t(0) = v_{0}, \nonumber
\end{align*}
where $k' = \frac{k}{2}$. 
}

\begin{lem} \label{strongAP1}

Assume (A2),  $u_0 \in V \cap X$ with $u_{0x} > 0$ on $[0,1]$,  $u_{0xx}(0)  = u_{xx}(1) = 0$ and $v_0 \in X$. 

(1) If $g \in L^{\infty}(0,T; H^1(0,1))$  $\cap W^{1,2}(0,T; H)$ with $g(0) \in H^1(0,1)$, $q \in W^{1,2}(0,T)$
and $-\gamma u_{0xxx}(1) + k' u_{0x}(1) +  k_v v_{0x}(1) + g(0,1) = q(0)$, 
then there exists one and only one strong solution $u \in S_1(T)$ of   (LP1)$(u_0, v_0, g, q)$ on $[0,T]$. 
Moreover, for any $M > 0$ there exists a positive constant $C_*(M)$ such that 
if $|g|_{ L^{\infty}(0,T; H^1(0,1))} +  |g|_{W^{1,2}(0,T; H)} + |q|_{W^{1,2}(0,T)} \leq M$, then
$$ |u|_{W^{2,\infty}(0,T; H)} +  |u|_{W^{1,\infty}(0,T; X)} +  |u|_{W^{2,2}(0,T; H^1(0,1))} +  |u|_{L^{\infty}(0, T;  V)}  \leq C_*(M). $$ 

(2) Let  $\{g_n\}$ and $\{ q_n\}$ be bounded  sets in  $L^{\infty}(0,T; H^1(0,1)) \cap W^{1,2}(0,T; H)$, and $W^{1,2}(0,T)$, respectively. 
 Assume $-\gamma u_{0xxx}(1) + k' u_{0x}(1) +  k_v v_{0x}(1) + g_n(0,1) = q_n(0)$ and denote by $u_n$  a solution of (LP1)$(u_0, v_0, g_n, q_n)$ for each $n$. 
If $g_n \to g$ in $L^2(0, T; H^1(0,1)$ and  $q_n \to q$ in  $L^{2}(0,T)$ as $n \to \infty$, then 
$u_n \to u$  weakly* in $W^{2,\infty}(0,T; H), W^{1,\infty}(0,T; X), L^{\infty}(0, T; V)$  and weakly in $W^{2,2}(0, T; H^1(0,1))$ as $n \to \infty$. 
\end{lem}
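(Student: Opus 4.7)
The plan is to prove Part (1) by a Galerkin scheme built on the basis $\{z_i\}$ of $X$ from Section \ref{P1}, and to deduce Part (2) from the uniform bound in Part (1) together with the linearity of (LP1). Since (LP1) is linear in $u$, the Galerkin projection is a linear ODE system with continuous coefficients, so the approximations $u_n \in X_n$ exist globally on $[0,T]$ without any of the shortening-interval considerations that appeared in Lemma \ref{exist_ODE}. Thus the argument reduces to deriving the right a priori estimates and passing to the limit in the usual way.

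I would work out three nested estimates. First, testing the projected equation with $u_{nt}$ produces the standard energy identity, controlling $u_{nt}$ in $L^\infty(0,T;H)$, $u_{nxx}$ in $L^\infty(0,T;H)$, $u_{nx}$ in $L^\infty(0,T;H)$, and $u_{ntx}$ in $L^2(0,T;H)$, all dominated by $|g|_{L^2(0,T;H)}$, $|q|_{L^2(0,T)}$, and the initial data. Second, differentiating the Galerkin equation in time and testing with $u_{ntt}$ yields bounds on $u_{ntt}$ in $L^\infty(0,T;H)$, $u_{ntx}$ in $L^\infty(0,T;H)$, and $u_{nttx}$ in $L^2(0,T;H)$; the compatibility $-\gamma u_{0xxx}(1) + k' u_{0x}(1) + k_v v_{0x}(1) + g(0,1) = q(0)$, together with $u_0 \in V$, $v_0 \in X$, and $g(0) \in H^1(0,1)$, is precisely what is needed to control $|u_{ntt}(0)|_H$ uniformly in $n$ via the equation itself. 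Third, elliptic recovery from the PDE in the form $\gamma u_{nxxxx} = g_x - m u_{ntt} + k' u_{nxx} + k_v u_{ntxx}$ bounds $u_n$ in $L^\infty(0,T;V)$ using the previous two steps and $g_x \in L^\infty(0,T;H)$. Extracting a weakly* convergent subsequence and identifying the limit in the weak formulation yields a solution $u \in S_1(T)$ with the quantitative bound $C_*(M)$, and uniqueness follows by testing the difference of two solutions against $u_t$ and applying Gronwall's inequality.

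For Part (2), the uniform bounds on $\{g_n\}$ in $L^\infty(0,T;H^1(0,1)) \cap W^{1,2}(0,T;H)$ and on $\{q_n\}$ in $W^{1,2}(0,T)$, combined with the compatibility at $t=0$ assumed for each $n$, place all the data in a fixed ball of size $M$, so Part (1) delivers a bound on $\{u_n\}$ in $S_1(T)$ that is independent of $n$. A weakly* convergent subsequence in the spaces of the conclusion then exists; since the weak formulation of (LP1) depends linearly on $(g_n, q_n)$, the $L^2(0,T;H^1(0,1))$ convergence of $g_n$ and the $L^2(0,T)$ convergence of $q_n$ are more than sufficient to pass to the limit. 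The limit is the unique strong solution $u$ of (LP1)$(u_0, v_0, g, q)$ by Part (1), so the whole sequence converges.

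The main obstacle will be the second-order time estimate. After differentiating in $t$, the boundary contribution at $x=1$ contains $q'(t) + g_t(t,1)$ multiplying $u_{ntt}(t,1)$; this trace must be estimated by $|u_{nttx}|_H^{1/2}|u_{ntt}|_H^{1/2}$ via Lemma \ref{lemma1} and absorbed on the left by Young's inequality before Gronwall can close. A closely related subtlety is that $u_{ntt}(0)$ is defined by the projected equation, and one must check that the compatibility at $x=1$ makes its $H$-norm uniformly bounded in $n$, since otherwise the second-order estimate cannot be initialized. Once these two points are handled, everything else is routine.
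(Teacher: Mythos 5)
The paper's ``proof'' of this lemma is essentially a citation: because (LP1) is linear it quotes Brokate--Sprekels \cite{Bro-Sp} for the strong solvability and bounds, and invokes Proposition~\ref{uni_weak_P1} for uniqueness of weak solutions. Your proposal instead reconstructs a self-contained Galerkin argument with three nested estimates and a linearity-based passage to Part~(2). That is a genuinely different and more transparent route, and the overall scheme (energy estimate, time-differentiated energy estimate, elliptic recovery via the equation, weak* compactness, identification, $g_n,q_n$-linearity of the weak form for the convergence in Part~(2)) is the standard and correct one for a linear viscoelastic beam.

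Two points need correcting or completing. First, a minor inaccuracy: in the Galerkin form obtained by differentiating the projected equation in $t$ and testing with $u_{ntt}$, the only boundary term is $q'(t)\,u_{ntt}(t,1)$; the quantity $g_t(t,1)$ you mention never appears. (In the strong form it would arise from the time-differentiated boundary condition, but it cancels exactly against the boundary term produced when $(g_{tx},u_{ntt})_H$ is integrated by parts; what survives is only the interior term $(g_t,u_{nttx})_H$, which needs no more than $g_t\in L^2(0,T;H)$.) This is a fortunate structural fact, because $g\in W^{1,2}(0,T;H)$ gives no pointwise trace for $g_t$, so had the trace genuinely appeared, the estimate would not close. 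Your proposed absorption of $q'(t)u_{ntt}(t,1)$ by Lemma~\ref{lemma1} and Young's inequality is fine as is.

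Second, and more substantively: the uniform bound on $|u_{ntt}(0)|_H$ is indeed the crux, and you flag it but do not close it. With the generic $X$-orthonormal basis $\{\hat z_i\}$ from Section~\ref{P1}, the projections $u_{0n}$, $v_{0n}$ need not satisfy $u_{0nxx}(0)=u_{0nxx}(1)=0$, nor lie in $H^4(0,1)$, so one cannot simply integrate by parts in the Galerkin equation at $t=0$ and use the compatibility condition to read off $|u_{ntt}(0)|_H\le C$ independently of $n$. The device that is actually needed is either a biharmonic eigenbasis adapted to the boundary conditions, so that $u_{0n}\in V$ automatically and the integration by parts is legitimate term by term, or an approximation of $(u_0,v_0,g,q)$ by smoother data still satisfying the compatibility, followed by a limit passage using the quantitative bound of Part~(1). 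Until one of these is built in, the second-order estimate cannot be launched, so the argument as written has a genuine gap at exactly the place you identify as ``a closely related subtlety.''
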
 
\color{black}

Since the problem \eqref{5m-1} $\sim$ \eqref{5m-3} is linear, we can easily prove this property. Here, 
for a proof of this result we quote  \cite{Bro-Sp}.  Also, the uniqueness of weak solutions to this problem holds because of  Proposition \ref{uni_weak_P1}.

\begin{prop} \label{exist_strong_P1}
Assume (A2) and (A3). 
If $u_0 \in V$ with $u_{0x} > 0$ on $[0,1]$ and $u_{0xx}(0)  = u_{xx}(1) = 0$,  $v_0 \in X$, $\hat{p} \in L^{\infty}(0,T; H^1(0,1)) \cap W^{1,2}(0,T; H)$ and 
$- \gamma u_{0xxx}(1) + f(u_{0x}(1)) + k_v v_{0x}(1) + \nu(\hat{p}(0,1)) + \varphi(s_0) =0$,  then 
 (P1)($\hat{p}, u_0,  v_0$) has  one and only one solution  $u$ on $Q(T)$ such that 
 $u \in S_1(T)$,  $u_x > 0$ on $\overline{Q(T)}$ 
 and \eqref{EQ1}, \eqref{BC1-1}, \eqref{BC1-2} and \eqref{IC1} hold  in the usual sense. 
\end{prop}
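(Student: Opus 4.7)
The plan is to bootstrap the weak solution supplied by Proposition \ref{exist_weak_P1} into the class $S_1(T)$ by recognizing it as the unique weak solution of the linear problem (LP1) treated in Lemma \ref{strongAP1}, with $u$-dependent source and boundary data. Put $k' := k/2$, take the weak solution $u$ of (P1)$(\hat p, u_0, v_0)$, and define
$$g(t, x) := f(u_x(t,x)) - k' u_x(t,x) + \nu(\hat p(t,x)), \qquad q(t) := -\varphi(u(t,1)).$$
Rewriting \eqref{EQ1} as $m u_{tt} + \gamma u_{xxxx} - k' u_{xx} - k_v u_{txx} = g_x$ and \eqref{BC1-2} as $-\gamma u_{xxx}(t,1) + k' u_x(t,1) + k_v u_{tx}(t,1) + g(t,1) = q(t)$ shows that $u$ is a weak solution of (LP1)$(u_0, v_0, g, q)$; the compatibility identity assumed in Proposition \ref{exist_strong_P1} is exactly the $t = 0$ trace of this rewritten boundary condition.

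Next, I verify the hypotheses of Lemma \ref{strongAP1}(1) for the data $g, q$. From (W1)--(W2) and the estimates underlying Proposition \ref{exist_weak_P1}, there is a $\delta > 0$ with $\delta \le u_x \le M$ on $\overline{Q(T)}$, while $u \in L^{\infty}(0, T; X)$ and $u_t \in L^2(0, T; H^1(0,1))$ give $u_x \in L^{\infty}(0, T; H^1(0,1))$ and $u_{tx} \in L^2(0, T; H)$. Smoothness of $f$ on $[\delta, M]$ yields $f(u_x) \in L^{\infty}(0, T; H^1(0,1))$ and $(f(u_x))_t = f'(u_x) u_{tx} \in L^2(0, T; H)$; the assumptions on $\hat p$ together with (A3) provide the same regularity for $\nu(\hat p)$. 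Hence $g \in L^{\infty}(0, T; H^1(0,1)) \cap W^{1,2}(0, T; H)$, and $g(0) \in H^1(0,1)$ from $u_0 \in V$ and $\hat p(0) \in H^1(0,1)$. The trace $u(\cdot, 1) \in W^{1,2}(0, T)$ by Sobolev embedding in the spatial variable, so (A2) gives $q \in W^{1,2}(0, T)$.

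Lemma \ref{strongAP1}(1) then yields a strong solution $\tilde u \in S_1(T)$ of (LP1)$(u_0, v_0, g, q)$, which is automatically a weak solution. Since $u$ is another weak solution of the same linear problem, uniqueness of weak solutions to (LP1) --- a direct adaptation of the dual-equation method of Proposition \ref{uni_weak_P1}, in fact much simpler here because the nonlinear $f$ has been replaced by the linear $k' u_x$ --- forces $u = \tilde u$. Thus $u \in S_1(T)$, and inverting the rewriting in Step 1 shows that $u$ satisfies \eqref{EQ1}--\eqref{IC1} in the usual sense; the positivity $u_x > 0$ on $\overline{Q(T)}$ is already part of (W2). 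Uniqueness within $S_1(T)$ is contained in Proposition \ref{uni_weak_P1}, as every strong solution is a weak one. The main technical hurdle in this plan is confirming that the composition $f(u_x)$ and its trace at $t = 0$ sit in the function spaces prescribed by Lemma \ref{strongAP1}; this rests crucially on the uniform positive lower bound $u_x \ge \delta$ furnished by Lemma \ref{key_lemma} in combination with the energy inequality \eqref{u-ineq}.
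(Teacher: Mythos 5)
Your proposal matches the paper's own proof essentially step for step: both take the weak solution from Proposition~\ref{exist_weak_P1}, split $f(u_x)$ as $k'u_x + \bigl(f(u_x) - k' u_x\bigr)$ so that $u$ is recognized as a weak solution of the linear problem (LP1)$\bigl(u_0, v_0, \nu(\hat p) - \tfrac{k}{4}(1 + u_x^{-3}), -\varphi(s)\bigr)$, then invoke Lemma~\ref{strongAP1} for a strong solution and the uniqueness of (LP1)-weak solutions (via Proposition~\ref{uni_weak_P1}) to identify the two. Your write-up is a bit more explicit about the space membership of $g$ and $q$ and about the role of the lower bound $u_x\ge\delta$, but the strategy and all key ingredients are identical to the paper's.
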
 

\begin{proof}
Proposition \ref{exist_weak_P1} implies existence of a weak solution $u$ of (P1)($\hat{p}, u_0,  v_0$). It is clear that 
$\displaystyle  - \frac{k}{4} (1 + \frac{1}{(u_x)^3}) := g \in L^{\infty}(0,T; H^1(0,1)) \cap W^{1,2}(0,T; H)$ and $\varphi(s) \in W^{1,2}(0,T)$, and then  by Lemma \ref{strongAP1} there exits a strong solution $\bar{u}$ of (LP1)$(u_0, v_0, \nu(\hat{p}) + g,  - \varphi(s))$. On the other hand, $u$ is a weak solution of (LP1)$(u_0, v_0, \nu(\hat{p}) + g,  -\varphi(s))$ on $[0,T]$. 
According to the uniqueness of solutions of (LP1)$(u_0, v_0, \nu(\hat{p}) + g,  -\varphi(s) )$, we get $u = \bar{u}$.  
Thus, this shows the conclusion of  this proposition. 
\end{proof} 
\color{black} 

\section{ Solvability of (P2)}  \label{P2} 

First, we show that  (P2)$(\hat{v}, s, h_0, p_0)$ and  ($\overline{\mbox{P2}})(\bar{v}, s, h_0, \bar{p}_0$) are equivalent. 

\begin{lem} \label{equi_P2}
Assume (A1), (A2),  $s \in W^{1,2}(0,T)$,  { $s > 0$ on $[0,T]$}, $u \in C^1(\overline{Q(T)})$ with $u_x > 0$ on  $\overline{Q(T)}$ and 
$v \in L^{\infty}(0,T; H^1(0,1))$ with $v(t,0) = 0$ for $t \in [0,T]$. Also,  let 
$\hat{v}(t, y ) = v(t, u^{-1}(t,y))$ for $(t,x) \in Q(s, T)$,  and $\bar{p}(t, x)  = p(t, s(t)x)$ and 
$\bar{v}(t, x) = \hat{v}(t, s(t)x)$ for $(t,x) \in Q(T)$. 
In this case 
$p$ satisfies (S3), \eqref{EQ2}, \eqref{BC2} and  \eqref{IC2}  if and only if $\bar{p} \in W^{1,2}(0, T; H) \cap L^2(0,T; H^2(0,1)) \cap L^{\infty}(0, T; H^1(0,1)) (=: S_2(T))$, and \eqref{EQ2'}, \eqref{BC2'} and  \eqref{IC2'} hold. 

\end{lem}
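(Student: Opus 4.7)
The plan is to carry out a direct change of variables $y = s(t)x$ in the spatial variable and to verify that, term-by-term, the equations, boundary conditions, initial condition, and regularity classes match up. Since $s \in W^{1,2}(0,T)$ with $s$ bounded above and below by positive constants on $[0,T]$ (by continuity and strict positivity), all computations below are justified.

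The first step is to record the derivative relations implied by $\bar{p}(t,x) = p(t, s(t)x)$ (and the analogous $\bar{v}(t,x) = \hat{v}(t, s(t)x)$). The chain rule yields
\begin{align*}
\bar{p}_x(t,x) &= s(t)\, p_y(t,s(t)x), \qquad \bar{p}_{xx}(t,x) = s(t)^2\, p_{yy}(t,s(t)x), \\
\bar{p}_t(t,x) &= p_t(t,s(t)x) + s'(t)x\, p_y(t,s(t)x),
\end{align*}
so that $p_t(t,s(t)x) = \bar{p}_t(t,x) - \frac{s'(t)x}{s(t)}\bar{p}_x(t,x)$, and an analogous identity holds for $\rho(p)_t$ after multiplying by $\rho'(p)$. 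Similarly $\hat{v}_y(t,s(t)x) = \frac{1}{s(t)}\bar{v}_x(t,x)$ and $\rho(p)_y(t,s(t)x) = \frac{1}{s(t)}\rho(\bar p)_x(t,x)$, so $(\hat v \rho(p))_y(t,s(t)x) = \frac{1}{s(t)}(\bar v \rho(\bar p))_x(t,x)$.

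The second step is to substitute these identities. Equation \eqref{EQ2}, evaluated at $(t,s(t)x)$, becomes
$$\rho(\bar p)_t - \frac{s'(t)x}{s(t)}\rho(\bar p)_x + \frac{1}{s(t)}(\bar v\rho(\bar p))_x - \frac{\kappa}{s(t)^2}\bar p_{xx} = 0 \quad \text{in } Q(T),$$
which is exactly \eqref{EQ2'}. The boundary conditions \eqref{BC2} at $y=0$ and $y=s(t)$ translate, via $\bar p_x = s(t)p_y$, into $\frac{\kappa}{s(t)}\bar p_x(t,0) = h_0(t)$ and $\frac{\kappa}{s(t)}\bar p_x(t,1) = -s'(t)\psi(s(t))$, i.e.\ \eqref{BC2'}. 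The initial condition \eqref{IC2} gives $\bar p(0,x) = p_0(s_0x) = \bar p_0(x)$, i.e.\ \eqref{IC2'}. All these manipulations are reversible (divide/multiply by $s$, use $\delta\le s \le M$), so the PDE statements are genuinely equivalent.

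The third and last step is to match the regularity classes. By the change of variables $y = s(t)x$ and the bounds $\delta\le s(t)\le M$, the integrals
\begin{align*}
\int_0^{s(t)} |p_{yy}(t,y)|^2\,dy &= \int_0^1 \frac{|\bar p_{xx}(t,x)|^2}{s(t)^3}\,dx,\\
\int_0^{s(t)} |p_y(t,y)|^2\,dy &= \int_0^1 \frac{|\bar p_x(t,x)|^2}{s(t)}\,dx,\\
\int_0^{s(t)} |p(t,y)|^2\,dy &= \int_0^1 s(t)|\bar p(t,x)|^2\,dx,
\end{align*}
are all controlled above and below by the corresponding norms of $\bar p$, so $p_{yy}\in L^2(Q(s,T))$ together with $|p(\cdot)|_{H^1(0,s(\cdot))}\in L^\infty(0,T)$ is equivalent to $\bar p\in L^2(0,T;H^2(0,1))\cap L^\infty(0,T;H^1(0,1))$. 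For the time derivative, the identity $p_t(t,s(t)x) = \bar p_t(t,x) - \frac{s'(t)x}{s(t)}\bar p_x(t,x)$ combined with $s'\in L^2(0,T)$ and the spatial $L^\infty$ bound on $1/s$ shows that $p_t\in L^2(Q(s,T))$ iff $\bar p_t\in L^2(Q(T))$. These equivalences together establish (S3) $\Longleftrightarrow$ $\bar p\in S_2(T)$.

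No single step is a serious obstacle; the only point requiring care is the time derivative, because $\bar p_t$ picks up the extra advection-like term $\frac{s'(t)x}{s(t)}\bar p_x$ from the moving stretch factor, which is precisely what produces the right-hand side $\frac{s'}{s}x\rho(\bar p)_x$ in \eqref{EQ2'}. Once this bookkeeping is done, all remaining identifications are algebraic, and the use of $\delta\le s\le M$ renders the norm equivalences routine.
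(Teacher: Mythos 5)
Your proof is correct, and it fills in a change-of-variables argument that the paper omits (Lemma \ref{equi_P2} is stated without proof, being regarded as routine). The chain-rule identities, the term-by-term translation of \eqref{EQ2}--\eqref{IC2} into \eqref{EQ2'}--\eqref{IC2'}, and the norm equivalences all check out; the one implicit step worth flagging is that the two-sided $L^2$ equivalence for the time derivatives uses not only $s' \in L^2(0,T)$ and $\delta \le s \le M$ but also the $L^\infty(0,T;L^2)$ control on $\bar p_x$ (resp.\ $p_y$), which is supplied by the $L^\infty(0,T;H^1)$ part of $S_2(T)$ (resp.\ by (S3)).
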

 
Hence, we call that $\bar{p}$ is a solution of ($\overline{\mbox{P2}})(\bar{v}, s, h_0, \bar{p}_0$), when $\bar{p} \in S_2(T)$, and \eqref{EQ2'}, \eqref{BC2'} and  \eqref{IC2'} hold.

\begin{prop} \label{exist_P2}
If  (A1)  and (A2)  hold,  $s \in W^{2,2}(0,T)$ with $s > 0$ on $[0,T]$ and $\bar{v} \in L^{\infty}(0,T; H^1(0,1))$, 
$h_0 \in W^{1,1}(0,T)$ and $\bar{p}_0 \in H^1(0,1)$,  then  ($\overline{\mbox{P2}})(\bar{v}, s, h_0, \bar{p}_0$) has a unique solution. 
\end{prop}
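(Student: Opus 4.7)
The plan is to treat $(\overline{\mbox{P2}})$ as a uniformly parabolic quasi-linear equation and combine a boundary lifting with a Banach fixed-point argument applied to a frozen-coefficient linearisation. The regularity framework is already favourable: by (A1), $\mu \leq \rho'(\cdot) \leq C_\rho$ and $\beta = \rho^{-1} \in W^{2,\infty}({\mathbb R})$; by the hypothesis $s \in W^{2,2}(0,T)$ with $s > 0$ on $[0,T]$ (and $W^{2,2}(0,T) \hookrightarrow C^1([0,T])$) one has $s \geq s_* > 0$ and $s' \in C([0,T])$. Hence $-(\kappa/s^2)\bar{p}_{xx}$ is uniformly elliptic and all coefficients are regular enough for standard $L^2$-parabolic theory to apply once the nonlinearity is frozen.

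First I would subtract an affine-in-$x$ lift $\Phi(t,x)$ whose slopes at $x=0$ and $x=1$ realise the Neumann data $s(t)h_0(t)/\kappa$ and $-s(t)s'(t)\psi(s(t))/\kappa$; by (A2) and the hypotheses on $h_0$ and $s$, one has $\Phi \in W^{1,2}(0,T; H^1(0,1))$, so that $q := \bar{p} - \Phi$ satisfies a problem with homogeneous Neumann conditions, augmented source, and initial datum $q_0 \in H^1(0,1)$. Then, for any $\tilde{q}$ in a closed ball of $C([0,\tau]; L^2(0,1))$ through $q_0$, I solve the linear equation
\begin{equation*}
\rho'(\tilde{q} + \Phi)\, q_t - \frac{\kappa}{s^2}\, q_{xx} = F(\tilde{q}, \bar{v}, s, s', \Phi) \quad \text{in } Q(\tau),
\end{equation*}
with homogeneous Neumann data and the initial condition, where $F$ collects the advective and lift-induced source terms. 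Standard variational parabolic theory (cf.\ \cite{Bro-Sp, LSU}) produces a unique $q \in S_2(\tau)$, defining the map $\Lambda : \tilde{q} \mapsto q$.

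The a priori estimates needed come from testing the linearised equation successively with $q$ and with $q_t$: the former controls $|q|_{L^\infty(0,\tau;L^2)}$ and $|q_x|_{L^2(0,\tau;L^2)}$, while the latter (after integrating by parts on the diffusive term) controls $|q_x|_{L^\infty(0,\tau;L^2)}$ and $|q_t|_{L^2(0,\tau;L^2)}$; $|q_{xx}|_{L^2(0,\tau;L^2)}$ then follows directly from the equation. Using the Lipschitz continuity of $\rho'$ and $\psi$ supplied by (A1)--(A2), the difference $\Lambda(\tilde{q}_1) - \Lambda(\tilde{q}_2)$ becomes a contraction in $C([0,\tau]; L^2(0,1))$ for $\tau$ small enough; since the bounds do not degenerate in $\tau$, a continuation argument extends the local solution to all of $[0,T]$, and $\bar{p} := q + \Phi$ lies in $S_2(T)$.

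Uniqueness is obtained by mimicking Lemma~\ref{Uni_P2} in the fixed-$s$ setting: if $\bar{p}_1, \bar{p}_2$ are two solutions, set $\bar{w} := \rho(\bar{p}_1) - \rho(\bar{p}_2)$, subtract the equations, test with $\bar{w}$, and use $\beta' \geq 1/C_\rho$ to extract the coercive diffusion term $\int \beta'(\cdot)|\bar{w}_x|^2\,dx$ after integration by parts. Estimating the advective and boundary contributions exactly as in Lemma~\ref{Uni_P2} and closing with Gronwall forces $\bar{w} \equiv 0$, hence $\bar{p}_1 \equiv \bar{p}_2$ by the strict monotonicity of $\rho$. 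The main obstacle I expect is the careful matching of time regularities across the lifting-and-linearisation scheme: the lift $\Phi$ must inherit enough regularity from the boundary coupling between $s$, $s'$ and $\psi$, and the frozen-coefficient linear problem must be solved in spaces compatible with the fixed-point iteration so that the contraction closes without gaining or losing derivatives between $\tilde{q}$ and $q$.
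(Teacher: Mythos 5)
Your strategy shares the Banach fixed-point skeleton of the paper's proof, but the specific linearisation and iteration space you chose do not close. First, $C([0,\tau];L^2(0,1))$ is too weak even to \emph{define} the frozen source term: $F$ contains $\rho(\tilde{q}+\Phi)_x=\rho'(\tilde{q}+\Phi)(\tilde{q}_x+\Phi_x)$ and $(\bar{v}\rho(\tilde{q}+\Phi))_x$, so you need at least $H^1$-in-space control of the iterate $\tilde{q}$, not merely $L^2$. Second, even after upgrading the space, freezing the coefficient of $q_t$ at $\rho'(\tilde{q}+\Phi)$ destroys the monotone structure of $\rho(\bar{p})_t$ and brings in derivatives of the coefficient that $\tilde{q}$ does not carry: testing with $q$ (as you propose) produces a chain-rule term $\frac{1}{2}\int\partial_t\bigl[\rho'(\tilde{q}+\Phi)\bigr]\,q^2\,dx$, which needs $\tilde{q}_t$, and the difference estimate for $\Lambda(\tilde{q}_1)-\Lambda(\tilde{q}_2)$ contains the source $\bigl(\rho'(\tilde{q}_1+\Phi)-\rho'(\tilde{q}_2+\Phi)\bigr)q_{2t}$, which, since $q_{2t}$ is only in $L^2(Q(\tau))$, must be paid for in $|\tilde{q}_1-\tilde{q}_2|_{L^\infty(Q(\tau))}$ --- a quantity your iteration metric does not control. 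The vague obstacle you flag at the end (``the frozen-coefficient linear problem must be solved in spaces compatible with the fixed-point iteration'') is exactly where the scheme breaks.

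The paper avoids both obstructions by linearising at a different point. In $(\overline{\mbox{AP2}})(s,f,h_0,h_1,\bar{p}_0)$ the principal part $\rho(\bar{p})_t-(\kappa/s^2)\bar{p}_{xx}$ is left fully nonlinear and solved directly via the classical theory of \cite{Ke(Chiba)} for parabolic equations with a maximal monotone, bi-Lipschitz nonlinearity under the time derivative; only the lower-order advective source $f=-\frac{1}{s}(\bar v\rho(\bar q))_x+\frac{xs'}{s}\rho(\bar q)_x$ and the right Neumann datum $h_1=-s'\psi(s)$ are frozen. The iteration runs in $K_2(M_1,T)=\{z\in L^4(0,T;H^1(0,1)):\int_0^T |z|_{H^1(0,1)}^4\,dt\le M_1\}$, whose $H^1$-in-space information both makes $f$ meaningful and, via Lemma~\ref{lemma1}, controls the $L^\infty$-in-space differences needed for contraction. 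The paper also keeps the inhomogeneous Neumann conditions inside the auxiliary problem rather than lifting them (Lemma~\ref{AP2} supplies the exact chain-rule identity with the boundary terms built in), and proves that $\Lambda_2^l$ contracts on all of $[0,T]$ for $l$ large, so no continuation step is needed. The minimal repair of your proposal is therefore: iterate in an $H^1$-in-space ball such as $L^4(0,\tau;H^1(0,1))$ rather than $C([0,\tau];L^2)$, and leave $\rho(\bar p)_t$ unfrozen in the auxiliary equation.
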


In order to prove Proposition \ref{exist_P2} we introduce the following auxiliary problem ($\overline{\mbox{AP2}})(s, f, h_0,  h_1, \bar{p}_0$):  
\begin{align}
&  \rho(\bar{p})_t  -  \frac{\kappa}{s^2}  \bar{p}_{xx} = f   \mbox{ in } Q(T),   \nonumber \\
& \frac{\kappa}{s(t)}   \bar{p}_x(t,0) = h_0(t), \frac{\kappa}{s(t)}  \bar{p}_x(t, 1) =  h_1(t)  \mbox{ for } 0 < t < T,  \nonumber \\
& \bar{p}(0,x) = \bar{p}_0(x) \mbox{ for }  x \in (0,1).  \nonumber 
\end{align} 
Since $\rho$ is bi-Lipschitz continuous, thanks to the classical theory (cf. \cite{Ke(Chiba)}, e.g.) for parabolic equations we obtain the next Lemma.

\begin{lem} \label{AP2} 
Assume (A1). 
If $s \in W^{2,2}(0,T)$ with $s > 0$ on $[0,T]$, $f \in L^2(0, T; H)$,  $h_0, h_1 \in W^{1,1}(0, T)$ and $\bar{p}_0 \in H^1(0,1)$, 
then ($\overline{\mbox{AP2}})(s, f, h_0,  h_1, \bar{p}_0$) has a unique solution $\bar{p} \in S_2(T)$, which is defined in Lemma \ref{equi_P2}. Moreover, 
the function $\displaystyle t \to \frac{\kappa}{2s(t)^2} |\bar{p}_x(t)|_H^2 + \frac{1}{s(t)}  ( h_0(t) \bar{p}(t,0) -  h_1(t) \bar{p}(t,1)$ is absolutely continuous on $[0,T]$ and it holds that 
\begin{align*}
 & - \frac{\kappa}{s(t)^2} (\bar{p}_{xx}(t), \bar{p}_t(t))_H \\
  =  &  \frac{d}{dt} \left( \frac{\kappa}{2s(t)^2} |\bar{p}_x(t)|_H^2 + \frac{1}{s(t)}  ( h_0(t) \bar{p}(t,0) -  h_1(t) \bar{p}(t,1)) \right) 
  + \frac{\kappa s '(t)}{s(t)^3} |\bar{p}_x(t)|_H^2  \\
 &  + \frac{s'(t)}{s(t)^2} ( h_0(t) \bar{p}(t,0) - h_1(t) \bar{p}(t,0) ) 
      - \frac{1}{s(t)} ( h_0'(t) \bar{p}(t,0) - h_1'(t) \bar{p}(t,1) )  \quad \mbox{ for a.e. } t \in [0,T]. 
 \end{align*}
\end{lem}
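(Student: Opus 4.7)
The plan is to solve the auxiliary problem by reducing the nonlinear time derivative to a standard parabolic framework, then derive the integration-by-parts identity by a careful application of the boundary conditions and the product rule.

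First, I would handle existence and uniqueness. Setting $w := \rho(\bar p)$ so that $\bar p = \beta(w)$, the equation takes the form $w_t - \frac{\kappa}{s(t)^2}\beta(w)_{xx} = f$ with the transformed Neumann data $\frac{\kappa}{s(t)}\beta(w)_x(t,0) = h_0(t)$, $\frac{\kappa}{s(t)}\beta(w)_x(t,1) = h_1(t)$, and initial datum $w(0) = \rho(\bar p_0) \in H^1(0,1)$. Under (A1), $\beta$ is bi-Lipschitz and $\beta'$ is Lipschitz, so this fits the classical framework of Kenmochi (cited as \cite{Ke(Chiba)}) for parabolic equations governed by a time-dependent subdifferential. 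To apply it cleanly I would first homogenize the boundary data by subtracting an affine-in-$x$ correction $q(t,x) = \frac{s(t)}{\kappa}\bigl((1-x)h_0(t) + x h_1(t)\bigr)$ or similar, which absorbs the non-homogeneous Neumann conditions into an additional source term in $L^2(0,T;H)$ (this is where $h_i \in W^{1,1}(0,T)$ and $s \in W^{2,2}(0,T) \hookrightarrow W^{1,\infty}(0,T)$ with $s>0$ are used). Uniqueness follows by subtracting two solutions, testing against $\bar p_1 - \bar p_2$, and using monotonicity of $\rho$ together with $\mu \leq \rho'$ to obtain a Gronwall-type estimate.

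Next, for the regularity $\bar p \in S_2(T)$, I would perform the standard energy estimate by multiplying the equation by $\bar p_t$ and integrating over $(0,1)$. The term $(\rho(\bar p)_t, \bar p_t)_H = (\rho'(\bar p)\bar p_t, \bar p_t)_H \geq \mu |\bar p_t|_H^2$ gives control of $\bar p_t$ in $L^2(Q(T))$, the diffusion term will yield the identity for $-\frac{\kappa}{s^2}(\bar p_{xx}, \bar p_t)_H$ described below, and together with Gronwall one obtains $|\bar p|_{L^\infty(0,T;H^1(0,1))} + |\bar p_t|_{L^2(Q(T))} < \infty$. The $L^2(0,T;H^2)$ bound then follows by rewriting the equation as $\bar p_{xx} = \frac{s^2}{\kappa}(\rho(\bar p)_t - f)$, whose right-hand side is in $L^2(Q(T))$.

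For the identity in the second part of the lemma, the computation is integration by parts combined with the product rule. Formally,
\begin{align*}
-\frac{\kappa}{s(t)^2}(\bar p_{xx}(t),\bar p_t(t))_H
&= -\frac{\kappa}{s(t)^2}\bigl[\bar p_x(t,1)\bar p_t(t,1) - \bar p_x(t,0)\bar p_t(t,0)\bigr] + \frac{\kappa}{2s(t)^2}\frac{d}{dt}|\bar p_x(t)|_H^2.
\end{align*}
Using the Neumann conditions $\frac{\kappa}{s}\bar p_x(t,0) = h_0(t)$ and $\frac{\kappa}{s}\bar p_x(t,1) = h_1(t)$ to rewrite the boundary terms, and then applying $\frac{d}{dt}\bigl(\frac{\kappa}{2s^2}|\bar p_x|_H^2\bigr) = \frac{\kappa}{2s^2}\frac{d}{dt}|\bar p_x|_H^2 - \frac{\kappa s'}{s^3}|\bar p_x|_H^2$ and $\frac{d}{dt}\bigl(\frac{h_0}{s}\bar p(t,0)\bigr) = \frac{h_0'}{s}\bar p(t,0) - \frac{h_0 s'}{s^2}\bar p(t,0) + \frac{h_0}{s}\bar p_t(t,0)$ (and analogously for the $x=1$ trace), one rearranges to obtain the stated formula. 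The main technical point is that all these boundary traces and time derivatives are justified, since $\bar p \in S_2(T) \hookrightarrow C([0,T];H^1(0,1))$ makes $\bar p(\cdot,0)$ and $\bar p(\cdot,1)$ absolutely continuous in $t$ (using $\bar p_t \in L^2(Q(T))$ and the trace theorem for the $H^2$-regularity of $\bar p(t,\cdot)$), while $s, h_0, h_1$ are absolutely continuous by assumption. The main obstacle is purely the bookkeeping of which regularity of $\bar p_{tx}$ on the boundary is available; the rigorous way around it is to perform the computation first for smooth approximations (e.g. obtained by mollifying $h_0, h_1, s, \bar p_0$) and pass to the limit using the estimates above.
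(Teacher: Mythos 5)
The paper provides no proof for this lemma: it simply states beforehand that ``Since $\rho$ is bi-Lipschitz continuous, thanks to the classical theory (cf.\ \cite{Ke(Chiba)}, e.g.) for parabolic equations we obtain the next Lemma.'' Your proposal fills in exactly the details that the paper leaves implicit, and does so correctly: the substitution $w = \rho(\bar p)$ reduces the problem to a parabolic equation for $w$ with Lipschitz nonlinearity $\beta = \rho^{-1}$ in the spatial operator, the lift of the Neumann data by an affine-in-$x$ function is the standard homogenization, and the resulting source term lands in $L^2(0,T;H)$ precisely because $s\in W^{2,2}(0,T)\hookrightarrow W^{1,\infty}(0,T)$ is bounded away from zero and $h_0,h_1\in W^{1,1}(0,T)$. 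The $S_2(T)$ regularity via multiplication by $\bar p_t$ and the bootstrap $\bar p_{xx}=\frac{s^2}{\kappa}(\rho(\bar p)_t - f)\in L^2(Q(T))$ is also the natural route.

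Your derivation of the integration-by-parts identity is correct: substituting the Neumann conditions into the boundary terms, then trading $\frac{\kappa}{2s^2}\frac{d}{dt}|\bar p_x|_H^2$ and $\frac{h_i}{s}\bar p_t(t,\cdot)$ for total time derivatives via the product rule reproduces the stated formula, with the minor exception that your computation gives $\frac{s'}{s^2}\bigl(h_0\bar p(t,0)-h_1\bar p(t,1)\bigr)$ in the penultimate term rather than the paper's $\frac{s'}{s^2}\bigl(h_0\bar p(t,0)-h_1\bar p(t,0)\bigr)$ --- this discrepancy is in fact a typo in the paper (one can see from the definition of $E_0$ in the proof of Proposition 6.1, where the corresponding term is $\frac{1}{s}(h_0\bar p(\cdot,0)-h_1\bar p(\cdot,1))$, that $\bar p(t,1)$ is intended), so your version is the correct one. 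Your remark about mollifying the data and passing to the limit to justify the absolute continuity of $t\mapsto E_0(t)$ is the right way to make the formal product-rule computation rigorous. In short, the proposal follows the same route the paper gestures at and supplies the omitted details correctly.
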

\color{black}

\begin{proof}[Proof of Proposition \ref{exist_P2}]
Under the assumption of Proposition \ref{exist_P2} uniqueness of solutions of ($\overline{\mbox{P2}})(\bar{v}, s, h_0, \bar{p}_0$)  is a direct consequence of the result proved in Section \ref{unique}. 

We shall  prove existence of a solution of (P2) by applying Banach's fixed point theorem. 
For $M_1 > 0$ we put $K_2(M_1, T) = \{ z \in L^4(0, T; H^1(0,1)):  \int_0^T |z|_{H^1(0,1)}^4 dt \leq M_1\}$.  
Let $\bar{q} \in K_2(M_1, T)$ and put 
$$ h_1  = - s' \psi(s), f = - \frac{1}{s} (\bar{v}  \rho(\bar{q}))_x + \frac{xs'}{s} \rho(\bar{q})_x.  $$
Since  $h_1 \in W^{1,2}(0, T)$ and $f \in L^2(0, T; H)$, Lemma \ref{AP2} implies existence of a solution $\bar{p} \in S_2(T)$ of ($\overline{\mbox{AP2}})(s, f, h_0,  h_1, \bar{p}_0$). 
Hence, we define the solution operator $\Lambda_2 : K_2(M_1, T)  \to L^4(0, T; H^1(0,1))$ by $\Lambda_2 \bar{q} = \bar{p}$. 

First, we show that for a large positive integer $l$ and some $M_1 > 0$, $\Lambda_2^l : K_2(M_1, T)  \to  K_2(M_1, T)$.   
Let $\bar{q} \in K_2(M_1, T)$ and $\Lambda_2 \bar{q} = \bar{p}$.  We multiply the equation by $\bar{p}_t$ and then by Lemma \ref{AP2}  and (A1) we have 
\begin{align}
& \mu |\bar{p}_t(t)|_H^2 + \frac{d}{dt} E_0(t)  \nonumber \\
\leq & - \frac{\kappa s'(t)}{s(t)^3} |\bar{p}_x(t)|_H^2 -  \frac{s'(t)}{s(t)^2} ( h_0(t) \bar{p}(t,0) - h_1(t) \bar{p}(t,1) ) 
      + \frac{1}{s(t)} ( h_0'(t) \bar{p}(t,0) - h_1'(t) \bar{p}(t,1) )  \label{eq6-113}  \\
      & + (f_1(t) \rho(\bar{q})(t), \bar{p}_t(t))_H + (f_2(t) \rho(\bar{q})_x(t), \bar{p}_t(t))_H  (=: \sum_{i=1}^5 I_i(t) ) 
       \mbox{ for a.e. } t \in [0,T],  \nonumber 
      \end{align}
where $\displaystyle E_0 = \frac{\kappa}{2s^2} |\bar{p}_x|_H^2 + \frac{1}{s}  ( h_0 \bar{p}(\cdot, 0) -  h_1 \bar{p}(\cdot,1 ))$, 
$\displaystyle f_1 = -\frac{1}{s} \bar{v}_x$ and $\displaystyle f_2 = -\frac{1}{s} \bar{v} + \frac{xs'}{s}$. It is clear that $f_1 \in L^{\infty}(0,T; H)$, $f_2 \in L^{\infty}(Q(T))$ and there exists a positive constants $C_1$ depending only on $s$,  $h_0$ and $h_1$  such that 
$$ E_0 \geq \frac{\kappa}{4 R^2} |\bar{p}_x|_H^2  - C_1(|\bar{p}|_H^2 + 1) \mbox{ on } [0,T],  $$
where $R = \sup\{s(t):  t \in [0,T]\}$. By putting $E_1 = E_0 + C_1(|\bar{p}|_H^2 + 1) + |\bar{p}|_H^2$, we see that 
$$ E_1 \geq  |\bar{p}|_H^2, \frac{4R^2}{\kappa}E_1 \geq  |\bar{p}_x|_H^2  \mbox{ on } [0,T]. $$
From these inequalities it follows that 
$$ I_1 \leq C_2  E_1,  I_2 \leq C_2 (E_1 +1),  I_3 \leq C_2 (|h_0'| + |h_1'|)  (E_1 +1), $$ 
$$ I_4 + I_5 \leq \frac{\mu}{2} |\bar{p}_t|_H^2 + C_2 (|\bar{q}|_{H^1(0,1)}^2 + 1) \mbox{ on } [0,T], $$
 where $C_2$ is a positive constant depending only on $s$, $h_0$ and $\bar{v}$.  Hence, we have 
 \begin{align*}
 & \frac{\mu}{2} |\bar{p}_t|_H^2 + \frac{d}{dt} (E_1 + 1) \\
 \leq & 2C_2(1+ |h_0'| + |h_1'|)(E_1 + 1) +  C_2 |\bar{q}|_{H^1(0,1)}^2 + (C_1+1) |\bar{p}|_H  |\bar{p}_t|_H
 \end{align*}
and 
\begin{equation}
 \frac{\mu}{4} |\bar{p}_t|_H^2 + \frac{d}{dt} (E_1 + 1) 
 \leq C_3(1 + |h_0'| + |h_1'|)(E_1 + 1) +  C_2 |\bar{q}|_{H^1(0,1)}^2  \mbox{ a.e. on }[0,T], \label{f1-2}
\end{equation} 
where $C_3$ is a positive constant. By applying Gronwall's inequality, we infer that 
\begin{equation}
\int_0^t |(\Lambda_2\bar{q})_t|_H^2 d\tau + |(\Lambda_2\bar{q})(t)|_{H^1(0,1)}^2 \leq C_4 \int_0^t |\bar{q}|_{H^1(0,1)}^2 d\tau + C_4
 \mbox{ for } t \in [0,T],  \bar{q} \in K_2(M_1, T),  \label{eq6-83}
 \end{equation} 
where $C_4$ is a positive constat. 
Similarly to the proof of  Proposition \ref{exist_weak_P1},  we observe that 
$$ |\Lambda_2^l \bar{q}(t)|_{H^1(0,1)}^2 \leq  \frac{ C_4 (C_4t)^{l-1}}{(l -1)!} |\bar{q}|_{L^2(0,T; H^1(0,1))}^2 + C_4 e^{C_4t} 
      \mbox{ for } t \in [0,T],  l = 1, 2, \cdots, \bar{q} \in K_2(M_1, T).  $$
Immediately, we get 
$$ \int_0^T |\Lambda_2^l \bar{q}|_{H^1(0,1)}^4 dt \leq  \left(  \frac{ C_4 (C_4T)^{l-1}}{(l -1)!} \sqrt{M_1} +   C_4 e^{C_4T} \right)^2 T 
          \quad     \mbox{ for }  l = 1, 2, \cdots \mbox{ and } \bar{q} \in K_2(M_1, T).    $$
Hence, we can take a positive integer $l_0$  and $M_1 > 0$ such that 
\begin{equation}
\int_0^T |\lambda^l \bar{q}|_{H^1(0,1)}^4  dt \leq M_1 
              \mbox{ for }  l \geq l_0 \mbox{ and } \bar{q} \in K_2(M_1, T).   \label{eq6-6.4}
\end{equation}
Moreover,  by \eqref{eq6-83} and \eqref{eq6-6.4}  for some $M_2 > 0$ it holds that 

\begin{equation}
|\Lambda_2^{l}  \bar{q}|_{W^{1,2}(0,T; H)}^2 +  |\Lambda_2^{l} \bar{q}|_{L^{\infty}(0,T; H^1(0,1))}^4  \leq M_2  \mbox{ for }   \bar{q} \in K_2(M_1, T) \mbox{ and } 
 l = 1, 2, \cdots. 
\label{eq6-24}
\end{equation}

Next, we show that $\Lambda_2^l$ is a contraction on $K_2(M_1, T)$ for some $l \geq l_0$.  Let $\bar{q}_i \in K_2(M_1, T)$ and $\bar{p}_i = \Lambda_2\bar{q}_i$ for $i = 1, 2$. For simplicity we put $\bar{p} = \bar{p}_1 - \bar{p}_2$ and  $\bar{q} = \bar{q}_1 - \bar{q}_2$, and then we have
$$ \rho(\bar{p}_1)_t - \rho(\bar{p}_{2})_t - \frac{\kappa}{s^2} \bar{p}_{xx} = f_1 ( \rho(\bar{q}_{1}) - \rho(\bar{q}_{2})) 
+ f_2 ( \rho(\bar{q}_{1})_x - \rho(\bar{q}_{2})_x)  \mbox{ a.e. on } Q(T). 
$$
Similarly to \eqref{eq6-113}, by multiplying it with $\bar{p}_t$ and Lemma \ref{AP2}, we see that 
\begin{align*}
  & \ ( \rho(\bar{p}_1)_t - \rho(\bar{p}_{2})_t, \bar{p}_t)_H + \frac{d}{dt} (\frac{\kappa}{2s} |\bar{p}_x|_H^2) + \frac{\kappa s'}{2s^3} |\bar{p}_x|_H^2 \\
 = & \ ( f_1 ( \rho(\bar{q}_{1}) - \rho(\bar{q}_{2})),  \bar{p}_t)_H 
+ (f_2 ( \rho(\bar{q}_{1})_x - \rho(\bar{q}_{2})_x),  \bar{p}_t)_H \quad \mbox{ a.e. on } [0,T]. 
\end{align*}
Here, on account of Lemma \ref{lemma1} we note that 

\begin{align*}
 ( \rho(\bar{p}_1)_t - \rho(\bar{p}_{2})_t, \bar{p}_t)_H 
=  &  ( \rho'(\bar{p}_1) \bar{p}_t, \bar{p}_t)_H + ((\rho'(\bar{p}_1) -  \rho'(\bar{p}_2)) \bar{p}_{2t}, \bar{p}_t)_H \\
\geq & \mu |\bar{p}_t|_H^2 - C_{\rho} |\bar{p}|_{L^{\infty}(0,1)}   |\bar{p}_{2t}|_H |\bar{p}_t|_H \\
\geq & \frac{\mu}{2}  |\bar{p}_t|_H^2 - C_5 (|\bar{p}|_{H}^2 + |\bar{p}_x|_{H}^2)   |\bar{p}_{2t}|_H^2, 
\end{align*}
$$
 (  f_1 ( \rho(\bar{q}_{1}) - \rho(\bar{q}_{2})),   \bar{p}_t)_H 
\leq  \frac{\mu}{8}  |\bar{p}_t|_H^2 + C_5 |\bar{q}|_{H^1(0,1)}^2  |f_1|_H^2,  
$$
$$
 (  f_2 ( \rho(\bar{q}_{1})_x - \rho(\bar{q}_{2})_x),    \bar{p}_t)_H 
\leq  \frac{\mu}{8}  |\bar{p}_t|_H^2 + C_5   (|\bar{q}_1|_{H^1(0,1)}^2 + 1)  |f_2|_{L^{\infty}(Q(T))}^2 |\bar{q}|_{H^1(0,1)}^2
\mbox{ a.e. on } (0,T), 
$$
where $C_5$ is a positive constant.  \color{black}
Hence, we have
\begin{align*}
 & \frac{\mu}{4} |\bar{p}_t|_H^2 + \frac{1}{2}  \frac{d}{dt} ( \frac{\kappa}{s^2} |\bar{p}_x|_H^2) \\
 \leq & C_6 |\bar{p}_x|_H^2 + C_6   |\bar{p}_{2t}|_H^2 (|\bar{p}|_{H}^2 +  |\bar{p}_x|_{H}^2) 
   + C_6 (1 +  |\bar{q}_1|_{H^1(0,1)}^2)  |\bar{q}|_{H^1(0,1)}^2 \mbox{ a.e. on } [0,T], 
\end{align*}
where $C_6$ is a positive constant. By putting $E_2 = \frac{\kappa}{2s^2} |\bar{p}_x|_H^2 + |\bar{p}|_H^2$, similarly to \eqref{f1-2}, we obtain
$$  \frac{\mu}{8} |\bar{p}_t|_H^2 +  \frac{d}{dt} E_2 
 \leq  C_7 (|\bar{p}_{2t}|_H^2 +1) E_2 
 + C_7  (1 +  |\bar{q}_1|_{H^1(0,1)}^2)  |\bar{q}|_{H^1(0,1)}^2 \mbox{ a.e. on } [0,T], 
$$ 
where $C_7$ is a positive constant. Consequently, by applying Gronwall's inequality,  we infer that 
\begin{align*}
 E_2(t)  & \leq C_7 \exp( C_7 \int_0^t (|\bar{p}_{2t}|_H^2 +1)d\tau) (\int_0^t  (1 +  |\bar{q}_1|_{H^1(0,1)}^2)  |\bar{q}|_{H^1(0,1)}^2 d\tau) \\
& \leq C_7 \exp( C_7 \int_0^t (  |\bar{p}_{2t}|_H^2 +1)d\tau)  (\int_0^t  (1 +  |\bar{q}_1|_{H^1(0,1)}^2)^2 d\tau)^{1/2} 
      (\int_0^t  |\bar{q}|_{H^1(0,1)}^4 d\tau)^{1/2}  
\end{align*}
and thanks to \eqref{eq6-24} we have
$$ |(\Lambda_2^l \bar{q}_1 - \Lambda_2^l \bar{q}_2)(t)|_{H^1(0,1)}^4  \leq C_8   \int_0^t |\Lambda_2^{l-1} \bar{q}_1- \Lambda_2^{l-1}\bar{q}_2|_{H^1(0,1)}^4 d\tau 
 \mbox{ for } t \in [0,T], \bar{q}_1, \bar{q}_2 \in K_2(M_1, T),  
 $$
and $l \geq 1$,  where $C_8$ is a positive constant depending on $M_1$ and $M_2$. It is easy to see that 
 $$ \int_0^t |\Lambda_2^l \bar{q}_1 - \Lambda_2^l \bar{q}_1|_{H^1(0,1)}^4 d\tau \leq \frac{ (C_8  t)^l}{l!}  \int_0^t | \bar{q}_1 - \bar{q}_2|_{H^1(0,1)}^4 d\tau 
 \mbox{ for } t \in [0,T], \bar{q} \in K_2(M_1, T), l = 1, 2, \cdots. 
 $$
Thus, we can take $l \geq l_0$ such that $\Lambda_2^l$ is the contraction mapping on $K_2(M_1, T)$. 
Therefore, we have proved existence of a solution to (P2). 
\end{proof}

\section{Existence of solutions to problem (P)}
Throughout this section we suppose that all assumptions as in Theorem \ref{main_th} hold.\\

Let $\hat{p} \in  S(T) := \{ z \in S_2(T) : z(0,1) = p_0(s_0) \}$. 
Proposition \ref{exist_strong_P1} implies existence of a solution $u \in S_1(T)$ of  (P1)($\hat{p}, u_0,  v_0$). 
Here, we put $s  = u(\cdot,1)$ and $\bar{v}(t,x)  = u_t(t, u^{-1}(t, s(t)x))$ for $(t,x) \in Q(T)$. It is clear that $s \in W^{2,2}(0, T)$ with $s  > 0$ on $[0, T]$ and 
$\bar{v} \in L^{\infty}(0, T; H^1(0,1))$. Accordingly, by  Proposition \ref{exist_P2} we obtain a unique solution $\bar{p} \in S_2(T)$ of  ($\overline{\mbox{P2}})(\bar{v}, s, h_0, \bar{p}_0$).  
Hence, we can define a mapping $\Gamma : S(T)  \to S(T)$ by $\Gamma \hat{p} = \bar{p}$. 
Moreover, we put $\Gamma^{(1)} \hat{p} = u$,    $\Gamma^{(2)} \hat{p} = s$ and $\Gamma^{(3)} \hat{p} = \bar{v}$ for any $\hat{p} \in  L^2(0, T; H^1(0,1))$. 
We prove existence of solutions of (P) by applying Schauder's fixed point theorem to $\Gamma$ in the weak topology of $S(T)$.\\

As a first step of the proof we provide the following Lemma. 
\begin{lem} \label{lem7-1}
The set $\{\Gamma^{(1)} \hat{p}: \hat{p} \in  S(T) \}$ is bounded in {$S_1(T)$. }
Moreover,   there exists positive constants $\delta$ and $M$ such that 
\begin{equation}
\delta \leq (\Gamma^{(1)} \hat{p})_x \leq M \mbox{ on } \overline{Q(T)} \mbox{ for } \hat{p} \in S(T).  \label{eq7-100}
\end{equation}
\end{lem}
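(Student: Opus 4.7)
The plan is two-stage: first extract a low-regularity energy identity that is uniform in $\hat p \in S(T)$, then bootstrap to the full $S_1(T)$ norm by viewing $u = \Gamma^{(1)}\hat p$ as the solution of a linear problem and invoking Lemma \ref{strongAP1}.

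For the first stage I would multiply the strong equation \eqref{EQ1} by $u_t$ and integrate over $(0,1)$, exactly as in the sketch following Theorem \ref{main_th}. The algebraic identity
$$ (f(u_x), u_{tx})_H = \tfrac{d}{dt}\!\left(\tfrac{k}{4}|u_x|_H^2 - \tfrac{k}{4}\!\int_0^1 u_x\,dx + \tfrac{k}{16}\!\int_0^1 u_x^{-2}\,dx\right) $$
handles the nonlinear elastic response, the boundary condition \eqref{BC1-2} absorbs the boundary contributions via $\varphi(s(t))u_t(t,1)$, and the coupling term $(\nu(\hat p), u_{tx})_H$ is dominated by $\tfrac{k_v}{2}|u_{tx}|_H^2 + C|\nu(\hat p)|_H^2$ using Young's inequality. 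Since $\nu$ and $\varphi$ are bounded by (A2)--(A3), the right-hand side is controlled uniformly in $\hat p$, yielding
$$ |u_t|_{L^\infty(H)}^2 + |u|_{L^\infty(X)}^2 + |u_{tx}|_{L^2(H)}^2 + \Bigl|\int_0^1 u_x^{-2}dx\Bigr|_{L^\infty(0,T)} \leq C_0. $$
Applying Lemma \ref{key_lemma} to $u(t,\cdot)$ for each $t \in [0,T]$ then delivers the lower bound $\delta > 0$ on $u_x$, while the Sobolev embedding $H^1(0,1) \hookrightarrow C([0,1])$ applied to $u_x$ (which is uniformly bounded in $H^1$) gives the upper bound $M$.

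For the bootstrap I would rewrite \eqref{EQ1} in the form of (LP1)$(u_0, v_0, g_u + \nu(\hat p), -\varphi(s))$ with $k' = k/2$ and $g_u := -\tfrac{k}{4}(1 - u_x^{-3})$, and apply Lemma \ref{strongAP1}(1). Because $u_x$ is pinched between $\delta$ and $M$ on $\overline{Q(T)}$, one verifies that $g_u$ lies uniformly in $L^\infty(0,T;H^1(0,1)) \cap W^{1,2}(0,T;H)$: its spatial derivative is $\tfrac{3k}{4}u_x^{-4}u_{xx}$ and its time derivative is $\tfrac{3k}{4}u_x^{-4}u_{tx}$, both controlled by the first-step estimate. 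Similarly $\varphi(s) \in W^{1,2}(0,T)$ uniformly since $s'(t) = u_t(t,1)$ is controlled by $|u_{tx}|_H$. The compatibility condition at $t=0$ imposed in the hypotheses of Theorem \ref{main_th} provides the matching condition required by Lemma \ref{strongAP1}(1), which then delivers the claimed bound in $S_1(T)$.

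The main obstacle is the source term $\nu(\hat p)$: to apply Lemma \ref{strongAP1} it must be uniformly bounded in $L^\infty(0,T;H^1(0,1)) \cap W^{1,2}(0,T;H)$, but $\nu(\hat p)_x = \nu'(\hat p)\hat p_x$ and $\nu(\hat p)_t = \nu'(\hat p)\hat p_t$ involve derivatives of $\hat p$ that are not uniformly controlled over all of $S(T)$. The natural resolution is to carry out the ensuing Schauder fixed-point argument on a closed convex bounded subset $\{|\hat p|_{S_2(T)} \leq R\}$ of $S(T)$, chosen large enough that $\Gamma$ maps it into itself; within such a ball, $\nu(\hat p)$ has a uniform bound in the norms required by Lemma \ref{strongAP1}, with the resulting $S_1(T)$-bound depending on $R$ but not on the specific $\hat p$. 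The low-regularity part of the estimate, together with the constants $\delta$ and $M$, is genuinely independent of $\hat p$ since it uses only boundedness of $\nu$ and $\varphi$.
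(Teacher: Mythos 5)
Your two-stage strategy coincides precisely with the paper's proof: the energy estimate from multiplying \eqref{EQ1} by $u_t$ (exploiting boundedness of $\nu$ and $\varphi$ to get a $W_1(T)$-bound independent of $\hat p$), then Lemma \ref{key_lemma} together with the Sobolev embedding for the bounds $\delta\leq u_x\leq M$, and finally the rewriting as (LP1) with $k'=k/2$ and source $g = -\tfrac{k}{4}(1+u_x^{-3})+\nu(\hat p)$, $q=-\varphi(s)$, feeding into Lemma \ref{strongAP1}(1). One small slip: the correct coefficient in your primitive identity is $\tfrac{k}{8}\int_0^1 u_x^{-2}\,dx$, not $\tfrac{k}{16}$.

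The obstacle you flag is real, and the paper's own proof does not address it. The constant $M'$ the paper produces bounds $|u_x^{-3}|_{L^\infty(0,T;H^1)}$, $|u_x^{-3}|_{W^{1,2}(0,T;H)}$ (their $W^{1,2}(0,T;H^1(0,1))$ is a typo; the $L^\infty$-in-time bound of $u_{txx}$ is \emph{not} available from $W_1(T)$) and $|\varphi(s)|_{W^{1,2}(0,T)}$, but the source $g$ also contains $\nu(\hat p)$, and $\nu(\hat p)_x=\nu'(\hat p)\hat p_x$, $\nu(\hat p)_t=\nu'(\hat p)\hat p_t$ are not uniformly controlled over the unbounded set $S(T)$. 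So the $S_1(T)$-bound in Lemma \ref{lem7-1}, as literally stated for all $\hat p\in S(T)$, does not follow from Lemma \ref{strongAP1}(1), and the paper simply asserts it. Your fix — restrict to a closed ball in $S_2(T)$ — is the natural one and is consistent with how the paper actually runs the Schauder argument (on the bounded set $S_*$), but note that as presented it risks circularity: the radius $M^*$ of $S_*$ is produced by Lemma \ref{lem7-4}, whose proof of the $S_2(T)$-bound on $\Gamma\hat p$ needs $h_1'=-(s'\psi(s))'$, hence $s''=u_{tt}(\cdot,1)$, hence the very $S_1(T)$-bound from Lemma \ref{lem7-1}. A careful version of the argument should therefore first establish the $W_1(T)$-bound and $\delta,M$ uniformly (which is fine, as you and the paper both show), then track explicitly how the $S_1(T)$ constant depends on $|\hat p|_{S_2(T)}$ and how the $S_2(T)$ constant of $\Gamma\hat p$ depends on the $S_1(T)$ constant of $\Gamma^{(1)}\hat p$, and verify that a self-consistent ball exists. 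This bookkeeping is missing from both your proposal and the paper's proof.
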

\begin{proof}
Put $u = \Gamma^{(1)} \hat{p}$ and $s = \Gamma^{(2)} \hat{p}$. 
By multiplying  \eqref{EQ1} with $u_t$ and integrating it,  we wee that 
\begin{align}
& \frac{d}{dt} \left( \frac{m}{2}  |u_{t}|_H^2 + \frac{\gamma}{2}   |u_{xx}|_H^2 +
 \frac{k}{4} (|u_{x}|_H^2 - |u_{x}|_{L^1(0,1)} ) + \frac{k}{8} \int_0^1 \frac{1}{|u_{x}|^{2}} dx \right) +  k_v |u_{tx}|_H^2  \nonumber  \\
 \leq & |\nu(\hat{p})|_H |u_{tx}|_H - s' \varphi(s) \quad   \mbox{  on  }  [0, T],  \nonumber 
\end{align} 
and 
\begin{align}
& \frac{d}{dt} \left( \frac{m}{2}  |u_{t}|_H^2 + \frac{\gamma}{2}   |u_{xx}|_H^2 +
 \frac{k}{4} (|u_{x}|_H^2 - |u_{x}|_{L^1(0,1)}) + \frac{k}{8} \int_0^1 \frac{1}{|u_{x}|^{2}} dx + \varphi(s) \right) + \frac{ k_v}{2}  |u_{tx}|_H^2  \nonumber  \\
 \leq & \frac{1}{2k_v} |\nu(\hat{p})|_H^2   \quad   \mbox{  on  }  [0, T]. \nonumber 
\end{align} 
Hence, thanks to the assumption (A2) and (A3) we obtain the boundedness of the set $\{\Gamma^{(1)} \hat{p} : \hat{p} \in  S(T)  \}$ in $W_1(T)$.

Accordingly,  by Lemma \ref{key_lemma}  \eqref{eq7-100} holds for some $\delta > 0$ and $M > 0$. Moreover, we can take $M' > 0$ such that
$$ \left| \frac{1}{(\Gamma^{(1)} \hat{p})_x^3}\right|_{L^{\infty}(0, T; H^1(0,1))}  + \left| \frac{1}{(\Gamma^{(1)} \hat{p})_x^3}\right|_{ W^{1,2}(0, T; H^1(0,1))} 
+ | \varphi( \Gamma^{(2)} \hat{p} )|_{W^{1,2}(0,T)} \leq M'   \quad \mbox{ for } \hat{p} \in  S(T). 
$$
Clearly, $\Gamma^{(1)} \hat{p}$ is a solution of  
(LP1)$\displaystyle (u_0, v_0, -\frac{\kappa}{4}(1 + \frac{1}{ ( (\Gamma^{(1)} \hat{p})_x)^3}) + \nu(\hat{p}), - \varphi( \Gamma^{(2)} \hat{p}) )$
on $[0,T]$.  Therefore, Lemma \ref{strongAP1} implies the boundedness of $\{\Gamma^{(1)} \hat{p}: \hat{p} \in  S(T) \}$ in $S_1(T)$. 

\end{proof}

Next, we show continuity of  the mapping $\Gamma$ by the following Lemma.  
\begin{lem} \label{lem7-3}
If $\{\hat{p}_n \}$ is bounded in $W^{1,2}(0,T; H)$ and $L^{\infty}(0, T; H^1(0,1))$ and 
$\hat{p}_n \to \hat{p}$ in $L^2(0, T; H^1(0,1))$ as $n \to \infty$,  
then $\Gamma^{(1)} \hat{p}_n \to \Gamma^{(1)} \hat{p}$ in $W_1(T)$,  weakly* in $W^{2,\infty}(0,T; H)$, $W^{1,\infty}(0,T; X)$, $L^{\infty}(0, T; V)$  and weakly in $W^{2,2}(0, T; H^1(0,1))$ as $n \to \infty$,  
$\Gamma^{(2)} \hat{p}_n \to \Gamma^{(2)} \hat{p}$ in $W^{2,2}(0,T)$, $\{\Gamma^{(3)} \hat{p}_n \}$ is bounded  in $L^{\infty}(0, T; H^1(0,1))$ and  
$\Gamma^{(3)} \hat{p}_n \to \Gamma^{(3)} \hat{p} $ in $L^{2}(0, T; H^1(0,1))$ as $n \to \infty$. 
\end{lem}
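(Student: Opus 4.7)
The plan combines uniform bounds with a uniqueness argument and difference estimates. By Lemma \ref{lem7-1}, the sequence $u_n := \Gamma^{(1)} \hat p_n$ is bounded in $S_1(T)$, and $\delta \le u_{nx} \le M$ on $\overline{Q(T)}$ for some $\delta, M > 0$ independent of $n$. Weak compactness extracts a subsequence converging weakly-$*$ in $W^{2,\infty}(0,T;H)$, $W^{1,\infty}(0,T;X)$, $L^{\infty}(0,T;V)$ and weakly in $W^{2,2}(0,T;H^1(0,1))$ to a limit $u^*$, while the compact embedding of $L^{\infty}(0,T;V) \cap W^{1,\infty}(0,T;H)$ into $C(\overline{Q(T)})$ (Aubin--Lions) supplies $u_{nx} \to u^*_x$ uniformly on $\overline{Q(T)}$. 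In particular, $\delta \le u^*_x \le M$ and $s_n = u_n(\cdot, 1) \to s^* := u^*(\cdot, 1)$ uniformly.

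Next I would identify the limit. Since $f$ is $C^1$ on $[\delta, M]$, $f(u_{nx}) \to f(u^*_x)$ uniformly on $\overline{Q(T)}$; similarly $\varphi(s_n) \to \varphi(s^*)$ uniformly. By (A3) and the hypothesis $\hat p_n \to \hat p$ in $L^2(0,T; H^1(0,1))$, also $\nu(\hat p_n) \to \nu(\hat p)$ strongly in $L^2(Q(T))$. Passing to the limit in the weak formulation (W3) of Definition \ref{def_weak_P1} shows that $u^*$ is a weak solution of (P1)$(\hat p, u_0, v_0)$; Proposition \ref{uni_weak_P1} then forces $u^* = \Gamma^{(1)} \hat p$, and a standard subsequence-of-subsequence argument delivers convergence of the full sequence in the claimed weak/weak-$*$ topologies.

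Strong $W_1(T)$-convergence follows by applying the energy inequality of Lemma \ref{Uni_P1} (equivalently, \eqref{diff_P1} in Lemma \ref{AP1}) to $w_n := u_n - \Gamma^{(1)} \hat p$: exploiting the uniform strip $\delta \le u_{nx}, u_x \le M$ to control the cubic term in $f$, Gronwall's inequality yields
\begin{equation*}
|w_n|^2_{W_1(T)} \le C \int_0^T |\nu(\hat p_n) - \nu(\hat p)|_H^2 \, d\tau \longrightarrow 0.
\end{equation*}
For $\Gamma^{(2)} \hat p_n = s_n$, I would apply Lemma \ref{strongAP1}(2) to the linear problem (LP1) with forcing $g_n = -\tfrac{k}{4}(1 + u_{nx}^{-3}) + \nu(\hat p_n)$ and boundary datum $q_n = -\varphi(s_n)$; both inputs converge in the required topologies thanks to the uniform convergence of $u_{nx}$ and the hypothesis on $\hat p_n$, and taking the trace at $x=1$ transports the convergence of $u_n$ in $W^{2,2}(0,T; H^1(0,1))$ into the convergence of $s_n$ in $W^{2,2}(0,T)$.

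The hardest step will be the $\Gamma^{(3)}$-claim. Boundedness of $\bar v_n(t,x) = u_{nt}(t, u_n^{-1}(t, s_n(t) x))$ in $L^{\infty}(0,T; H^1(0,1))$ is a chain-rule computation using $u_{nx} \ge \delta$ together with the $W^{1,\infty}(0,T;X)$-bound on $u_n$. For the strong convergence in $L^2(0,T; H^1(0,1))$, the blueprint is Lemma \ref{bar_v}: direct expansion produces differences of $u_n - u$ and $s_n - s$ (uniformly small by the previous steps), $u_n^{-1} - u^{-1}$ (uniformly small thanks to the monotonicity $u_{nx} \ge \delta$ combined with $u_n \to u$ uniformly), and $u_{nt} - u_t$. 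Differentiating in $x$ inside the composition forces use of the spatial derivatives $u_{ntx}$, which converge strongly in $L^2(0,T; H)$ by the $W_1(T)$-step; a change-of-variables argument pulls back the resulting $L^2$-norms onto the reference interval $(0,1)$, at which point the estimate closes.
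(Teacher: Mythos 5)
Your proposal is correct and follows essentially the same route as the paper's proof: the energy/Gronwall argument on the difference $u_n - \Gamma^{(1)}\hat p$ (the variant of \eqref{4-i} with the $\nu(\hat p_n) - \nu(\hat p)$ forcing term) to get strong $W_1(T)$-convergence, then Lemma~\ref{strongAP1}(2) applied to (LP1) with forcing $g_n = -\tfrac{k}{4}(1 + u_{nx}^{-3}) + \nu(\hat p_n)$ and boundary data $q_n = -\varphi(s_n)$ for the higher-order weak/weak-$*$ convergences, the trace for $s_n$, and the chain-rule decomposition patterned on Lemma~\ref{bar_v} together with the bound \eqref{eq7-4} for $\bar v_n$.

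One small remark: your opening paragraph (extracting a weak-$*$ limit and identifying it via the weak formulation and Proposition~\ref{uni_weak_P1}) is logically redundant. The Gronwall step in your second paragraph already compares $u_n$ directly to the known object $\Gamma^{(1)}\hat p$ and yields strong $W_1(T)$-convergence of the \emph{full} sequence with no identification needed, and the weak-$*$ limits in $W^{2,\infty}(0,T;H)$, $W^{1,\infty}(0,T;X)$, $L^{\infty}(0,T;V)$ and the weak limit in $W^{2,2}(0,T;H^1(0,1))$ come out of Lemma~\ref{strongAP1}(2) afterwards, exactly as the paper does. Also, when you cite \eqref{diff_P1} for the difference estimate, note that \eqref{diff_P1} compares two solutions with the same $\hat p$ but different $g$'s; the estimate you actually need is the \eqref{4-i}-type inequality that produces the $|\hat p_n - \hat p|_H^2$ term on the right, so \eqref{4-i} is the correct citation rather than \eqref{diff_P1}.
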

\begin{proof}
Let $u_n = \Gamma^{(1)} \hat{p}_n$ for $n$ and $u = \Gamma^{(1)} \hat{p}$, 
 $s_n = \Gamma^{(2)} \hat{p}_n $ on $[0,T]$ for $n= 1, 2, \cdots$ and $s = \Gamma^{(2)} \hat{p} $ on $[0,T]$, 
Similarly to \eqref{4-i}, we see that 
\begin{align}
& \frac{d}{dt} ( \frac{m}{2} |u_{nt}(t) - u_t(t)|_H^2 + \frac{\gamma}{2} |u_{nxx}(t)  - u_{xx}(t)|_H^2  
+ \frac{k}{4} |u_{nx}(t) - u_x(t)|_H^2) + \frac{k_v}{2}  |u_{ntx}(t) - u_{tx}(t)|_H^2 \nonumber \\
 \leq & C_1 |u_{nx}(t) - u_x(t)|_H^2  + C_1 |\hat{p}_n(t) - \hat{p}(t)|_H \mbox{ for } t \in [0,T],   \nonumber
\end{align}
where $C_1$ is a positive constant. Hence, by applying Gronwall's inequality we see that 
$u_n \to u$ in $W_1(T)$ as $n \to \infty$, since $\hat{p}_n \to \hat{p}$ in $L^2(0, T; H)$. 
Immediately, we have $\varphi(s_n) \to \varphi(s)$ in $W^{1,2}(0,T)$ and 
$$  \frac{\kappa}{4} (1 + \frac{1}{u_{nx}^3} ) \to  \frac{\kappa}{4} (1 + \frac{1}{ u_{x}^3} ) 
  \mbox{ in } L^{\infty}(0,T; H^1(0,1)), W^{1,2}(0, T; H) \mbox{ as } n \to \infty. $$ 
It is clear that $u_n$  is a solution of (LP1)$\displaystyle (u_0, v_0, -\frac{\kappa}{4}(1 + \frac{1}{ ( u_{nx})^3})  + \nu(\hat{p}_n), - \varphi(s_n) )$
on $[0,T]$ for each $n$ and $u$ is a solution of (LP1)$\displaystyle (u_0, v_0, -\frac{\kappa}{4}(1 + \frac{1}{ ( u_{x})^3})  + \nu(\hat{p}), - \varphi(s) )$ on $[0,T]$. 
 Therefore, on account of Lemma \ref{strongAP1} we infer that 
 $\{u_n\}$ is bounded  in $W^{2,\infty}(0,T; H)$, $W^{1,\infty}(0,T; X)$, $L^{\infty}(0, T; V)$ and $W^{2,2}(0, T; H^1(0,1))$ as $n \to \infty$.   
Also, observe that  
 $u_n \to u$ weakly* in $W^{2,\infty}(0,T; H)$, $W^{1,\infty}(0,T; X)$, $L^{\infty}(0, T; V)$  and weakly in $W^{2,2}(0, T; H^1(0,1))$ as $n \to \infty$.   
Immediately, this shows that $s_n \to s$ weakly in $W^{2,2}(0,T)$ as $n \to \infty$. 

Next, we put  $\Gamma^{(3)} \hat{p}  =\bar{v}$  and $\Gamma^{(3)} \hat{p}_n  =\bar{v}_{n}$ for each $n$, 
Similarly to  Lemma \ref{bar_v},    for some  positive constant $C_2$ it holds that 
$$ |\bar{v}_n(t) - \bar{v}(t)|_H \leq C_2( |u_{nx}(t) - u_{x}(t)|_H + |u_{nt}(t) - u_{t}(t)|_H) \mbox{ for } 
 t \in [0, T] \mbox{ and } n = 1, 2, \cdots. $$ 
Hence, we have $\bar{v}_n \to \bar{v}$ in $L^{2}(0, T; H)$ as $n \to \infty$.

As a next step,  we show that 
$\bar{v}_{nx} \to \bar{v}_x$ in $L^{2}(0, T; H)$ as $n \to \infty$. 
First, we see that 
$$ \bar{v}_{nx}(t,x) = \frac{s_n(t) u_{ntx}(t, u_n^{-1}(t, s_n(t)x))}{u_{nx}(t, u_n^{-1}(t, s_n(t)x))}, 
\bar{v}_{x}(t,x) = \frac{s(t) u_{tx}(t, u^{-1}(t, s_n(t)x))}{u_{x}(t, u^{-1}(t, s(t)x))} \mbox{ for } (t, x) \in Q(T). 
$$ 
Immediately, we have
\begin{align*}
& |\bar{v}_{nx}(t,x) - \bar{v}_{n}(t,x) | \\
\leq & |s_n(t) - s(t)|  \left|  \frac{u_{ntx}(t, u_n^{-1}(t, s_n(t)x))}{u_{nx}(t, u_n^{-1}(t, s_n(t)x))} \right| \\ 
&  + | s(t)| |u_{ntx}(t, u_n^{-1}(t, s_n(t)x))| \left| \frac{1}{u_{nx}(t, u_n^{-1}(t, s_n(t)x))} - \frac{1}{u_{x}(t, u^{-1}(t, s(t)x))} \right| \\ 
&  + \left| \frac{s(t)}{u_{x}(t, u^{-1}(t, s(t)x))} \right| |u_{ntx}(t, u_n^{-1}(t, s_n(t)x)) - u_{tx}(t, u^{-1}(t, s(t)x)) |  \\
= : & \sum_{i=1}^3 I_{in}(t,x)  \mbox{ for } (t, x) \in Q(T).
\end{align*}
From Lemma \ref{lem7-1} it follows that 
$$ |I_{1n}|_H \leq C_3 |s_n - s| \to  0 \mbox{ as } n \to \infty \mbox{ uniformly on }   [0,T], $$
where $C_3$ is a positive constant independent of $n$. 
Easily, we get 
\begin{align*}
 I_{2n}(t,x) \leq  & C_4 (|u_{nx}(t, u_n^{-1}(t, s_n(t)x)) - u_{nx}(t, u^{-1}(t, s(t)x))|  \\
& \  + |u_{nx}(t, u^{-1}(t, s(t)x)) - u_{x}(t, u^{-1}(t, s(t)x))| )  \\ 
& \  (=: I_{2,1n}(t,x) + I_{2,2n}(t,x) )  \quad   \mbox{ for } (t, x) \in Q(T), 
\end{align*}
where $C_4$  is a positive constant independent of $n$. Similarly to \eqref{40-2}, we have 
\begin{align*}
& \   I_{2,1n}(t,x)  \\
\leq  &\  C_4 |u_{nxx}(t)|_{L^{\infty}(0,1)} |u_n^{-1}(t, s_n(t)x) -  u^{-1}(t, s(t)x)|  \\ 
\leq  &\  C_4 |u_{nxx}(t)|_{L^{\infty}(0,1)} ( |u_n^{-1}(t, s_n(t)x) -  u^{-1}(t, s_n(t)x)|  + |u^{-1}(t, s_n(t)x) -  u^{-1}(t, s(t)x)| )
 \\ 
 \leq  &\  C_4 |u_{nxx}(t)|_{L^{\infty}(0,1)} ( \frac{1}{\delta}  |u_n(t, u_n^{-1}(t, s_n(t)x)) -  u(t, u^{-1}(t, s_n(t)x))|  \\
&  \ +           |(u^{-1})_x(t)|_{L^{\infty}(0,s(t))}  |s_n(t) - s(t)| ) \quad   \mbox{ for } (t, x) \in Q(T). 
\end{align*}
Here, by change of variables we observe that 
\begin{align*}
& |I_{2,1n}(t)|_H  \\  
\leq &  \frac{C_4}{\delta}  |u_{nxx}(t)|_{L^{\infty}(0,1)} ( \frac{1}{\delta}  |u_n(t) -  u(t)|_H  +     |s_n(t) - s(t)| ) \quad   \mbox{ for } t \in [0,T],
\end{align*}
and 
\begin{align*}
|I_{2,2n}(t)|_H \leq  \frac{C_4}{\delta}   |u_n(t) -  u(t)|_H  \quad   \mbox{ for } t \in [0,T]. 
\end{align*}
Hence, it holds $|I_{2n}(t)|_H \to 0$ as $n \to \infty$ uniformly on $[0,T]$.  Moreover, we have 
\begin{align*}
 I_{3n}(t,x) \leq  & C_5 (|u_{ntx}(t, u_n^{-1}(t, s_n(t)x)) - u_{tx}(t, u_n^{-1}(t, s_n(t)x))|  \\
& \  + |u_{tx}(t, u_n^{-1}(t, s_n(t)x)) - u_{tx}(t, u^{-1}(t, s(t)x))| )  \\ 
& \  (=: I_{3,1n}(t,x) + I_{3,2n}(t,x) )  \quad   \mbox{ for } (t, x) \in Q(T), 
\end{align*}
where $C_5$  is a positive constant independent of $n$. Clearly, by change of variables we see that 
\begin{equation}
|I_{3,1n}(t)|_H  \leq \frac{1}{\delta} |u_{ntx}(t)- u_{tx}(t)|_H  \quad   \mbox{ for } t \in [0,T].  \label{7-12}
\end{equation} 
{
Since $\{u_{nt}\}$ is  bounded in $W^{1,\infty}(0, T; H)$ and $L^{\infty}(0, T; X)$, by Aubin's compact theorem (see \cite{Lions}) we can show that 
$I_{3,1n} \to 0$ in $L^2(0, T; H)$ as $n \to \infty$. }
Also, by applying  \eqref{40-2}, again, we note that 
\begin{align}
 & |I_{3,2n}(t)|_H  \nonumber \\
 \leq  & C_5  (\int_0^1 |u_{txx}(t)|_H^2 |u_n^{-1}(t, s_n(t)x)) - u^{-1}(t, s(t)x))| dx )^{1/2}   \nonumber \\
  \leq  & C_5 |u_{txx}(t)|_H (\int_0^1  (|u_n^{-1}(t, s_n(t)x)) - u_n^{-1}(t, s(t)x))| + |u_n^{-1}(t, s(t)x)) - u^{-1}(t, s(t)x))|)  dx )^{1/2}  \nonumber \\
 \leq  & C_5 |u_{txx}(t)|_H (\int_0^1   |(u_n^{-1})_x(t)|_{L^{\infty}(0,s_n(t))} |s_n(t) - s(t)| dx +  \frac{1}{\delta} \int_0^1 |u_n^{-1}(t, s(t)x)) - u^{-1}(t, s(t)x))|  dx )^{1/2} \nonumber \\ 
 \leq  & \frac{C_5}{\delta}  |u_{txx}(t)|_H (  |s_n(t) - s(t)| +  \int_0^1 |u_n(t, u^{-1}(t, s(t)x))) -  u(t, u^{-1}(t, s(t)x)))|  dx )^{1/2}  \nonumber \\ 
 \leq  & \frac{C_5}{\delta}  |u_{txx}(t)|_H (  |s_n(t) - s(t)| +   \frac{1}{\delta} \int_0^1 |u_n(t, x) -  u(t, x)|  dx )^{1/2}    \mbox{ for } t \in [0,T]. \label{7-13}
\end{align}
According to \eqref{7-12} and \eqref{7-13}, we infer that $|I_{3n}(t)|_H \to 0$ as $n \to \infty$ uniformly on $[0,T]$.\\

At the end of the proof we show that $\{ \bar{v}_{n} \}$ is bounded in $L^{\infty}(0,T; H^1(0,1))$. 
By elementary calculation the following estimates are obtained: 
\begin{equation}
\left. 
\begin{array}{l}
\displaystyle  |\bar{v}_n(t)|_H \leq \frac{1}{\sqrt{\delta}} |u_{nt}(t)|_H, \\
\displaystyle |\bar{v}_{nx}(t)|_H \leq \frac{1}{\delta \sqrt{\delta}} |u_{nx}(t)|_H |u_{ntx}(t)|_H  
\end{array} 
\right\} \quad 
\mbox{ for } t \in [0,T] \mbox{ and } n = 1, 2, \cdots.  \label{eq7-4} 
\end{equation}
Thus, we have proved this Lemma. 
\end{proof}

Next, we show boundedness of  $\{\Gamma \hat{p}: \hat{p} \in S(T)\}$. 
\begin{lem} \label{lem7-4}
The set $\{ \Gamma \hat{p}: \hat{p} \in S(T)\}$ is bounded in $S_2(T)$, namely, 
there exists a positive constant $M^*$ such that 
$$ |\Gamma \hat{p}|_{W^{1,2}(0,T; H)} + |\Gamma \hat{p}|_{L^{\infty}(0,T; H^1(0,1))} + |\Gamma \hat{p}|_{L^{2}(0,T; H^2(0,1))} 
 \leq M^* \mbox{ for } \hat{p} \in S(T).  $$
\end{lem}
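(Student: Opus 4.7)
The plan is to transfer the single-trajectory energy estimate already appearing in the proof of Proposition~\ref{exist_P2} into a bound uniform over the whole family $\{\hat p\in S(T)\}$. The key point is that the coefficients of $(\overline{\mbox{P2}})$ produced by $\Gamma$, namely $s=\Gamma^{(2)}\hat p$ and $\bar v=\Gamma^{(3)}\hat p$, are uniformly controlled. Indeed, Lemma~\ref{lem7-1} gives a bounded set $\{\Gamma^{(1)}\hat p\}$ in $S_1(T)$ with $\delta\le(\Gamma^{(1)}\hat p)_x\le M$ on $\overline{Q(T)}$, so the trace $s=(\Gamma^{(1)}\hat p)(\cdot,1)$ is uniformly bounded in $W^{2,2}(0,T)$ with $\delta\le s\le M$. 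Estimate~\eqref{eq7-4} appearing inside the proof of Lemma~\ref{lem7-3} then upgrades this to a uniform $L^\infty(0,T;H^1(0,1))$ bound on $\bar v$, and the Neumann datum $h_1:=-s'\psi(s)$ lies in a bounded subset of $W^{1,2}(0,T)$ by (A2).

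Next, I test \eqref{EQ2'} (with $\bar q$ replaced by $\bar p$) against $\bar p_t$ in $H$. Using $\rho'\ge\mu$ and applying Lemma~\ref{AP2} to rewrite the second-order term as $\frac{d}{dt}E_0+R$, with $E_0=\frac{\kappa}{2s^2}|\bar p_x|_H^2+\frac{1}{s}(h_0\bar p(\cdot,0)-h_1\bar p(\cdot,1))$, and estimating the drift contributions from $\bar v\rho(\bar p)_x$, $\bar v_x\rho(\bar p)$ and $s'x\rho(\bar p)_x$ exactly as in the steps leading to \eqref{f1-2}, I arrive at an inequality
\[
\tfrac{\mu}{4}|\bar p_t|_H^2+\tfrac{d}{dt}\mathcal E\le G(t)\,(\mathcal E+1)\quad\text{a.e.\ on }[0,T],
\]
where $\mathcal E$ is equivalent to $|\bar p|_{H^1(0,1)}^2+1$ and $G\in L^1(0,T)$ has norm controlled purely by the quantities listed in the previous paragraph (together with $|h_0|_{W^{1,1}(0,T)}$ and $|\bar p_0|_{H^1(0,1)}$). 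Gronwall's inequality then delivers uniform $W^{1,2}(0,T;H)$ and $L^\infty(0,T;H^1(0,1))$ bounds on $\bar p=\Gamma\hat p$.

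To close the $S_2(T)$ bound, I solve \eqref{EQ2'} for $\bar p_{xx}$,
\[
\bar p_{xx}=\tfrac{s^2}{\kappa}\Bigl(\rho(\bar p)_t+\tfrac{1}{s}(\bar v\rho(\bar p))_x-\tfrac{s'}{s}x\rho(\bar p)_x\Bigr),
\]
and observe that each term on the right is uniformly bounded in $L^2(0,T;H)$, by the previous step together with (A1), the $L^\infty(0,T;H^1(0,1))$ bound on $\bar v$, and the $W^{2,2}(0,T)$ bound on $s$. This produces the required $L^2(0,T;H^2(0,1))$ estimate and therefore the stated bound in $S_2(T)$.

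The main technical obstacle is the bookkeeping in the second step: the boundary trace terms inside $E_0$ are not sign-definite, and in particular the contribution of $h_1\bar p(\cdot,1)$ must be absorbed via the interpolation inequality of Lemma~\ref{lemma1}, which in turn forces $G(t)$ to pick up $|s''|$ and $|\bar v_x|$ factors. The uniform $S_1(T)$ bound from Lemma~\ref{lem7-1} (together with \eqref{eq7-4}) is precisely what makes these factors integrable with norm depending only on the data, so that Gronwall's inequality produces a constant independent of $\hat p\in S(T)$.
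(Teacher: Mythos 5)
Your proposal is correct and follows essentially the same route as the paper: uniform control on $s=\Gamma^{(2)}\hat p$ and $\bar v=\Gamma^{(3)}\hat p$ from Lemma~\ref{lem7-1} and \eqref{eq7-4}, followed by the $\bar p_t$-multiplier energy estimate exactly as in the proof of Proposition~\ref{exist_P2} (steps \eqref{eq6-113}, \eqref{f1-2}, \eqref{eq6-83}), with the $L^2(0,T;H^2(0,1))$ bound recovered from the equation itself. Your identification $h_1=-s'\psi(s)$ is the correct one and in fact silently repairs a typo ($h_1=-\varphi(s)$) in the paper's own proof of this lemma.
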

\begin{proof}
By Lemma \ref{lem7-1} $\{\Gamma^{(1)} \hat{p}: \hat{p} \in  S(T) \}$ is bounded in $S_1(T)$ and from \eqref{eq7-4} it follows that 
$\{\Gamma^{(3)} \hat{p}: \hat{p} \in S(T) \}$ is bounded in $L^{\infty}(0, T; H^1(0,1))$. 
Let $\hat{p} \in S(T)$, $s = \Gamma^{(2)} \hat{p}$, $\bar{v} = \Gamma^{(3)} \hat{p}$, $\bar{p} = \Gamma \hat{p}$, 
$h_1 = -\varphi(s)$, 
 $\displaystyle E_0 = \frac{\kappa}{2s^2} |\bar{p}_x|_H^2 + \frac{1}{s}  ( h_0 \bar{p}(\cdot, 0) -  h_1 \bar{p}(\cdot,1 ))$, 
$\displaystyle f_1 = -\frac{1}{s} \bar{v}_x$ and $\displaystyle f_2 = -\frac{1}{s} \bar{v} + \frac{xs'}{s}$.
Similarly to  \eqref{eq6-113} and  \eqref{eq6-83},   from  
\begin{align}
& \mu |\bar{p}_t(t)|_H^2 + \frac{d}{dt} E_0(t)  \nonumber \\
\leq & - \frac{\kappa s'(t)}{s(t)^3} |\bar{p}_x(t)|_H^2 -  \frac{s'(t)}{s(t)^2} ( h_0(t) \bar{p}(t,0) - h_1(t) \bar{p}(t,1) ) 
      + \frac{1}{s(t)} ( h_0'(t) \bar{p}(t,0) - h_1'(t) \bar{p}(t,1) )  \label{eq7-5}  \\
      & + (f_1(t) \rho(\bar{p})(t), \bar{p}_t(t))_H + (f_2(t) \rho(\bar{p})_x(t), \bar{p}_t(t))_H  \quad 
       \mbox{ for a.e. } t \in [0,T],  \nonumber 
      \end{align}
we can obtain the desired boundedness of $\{\Gamma \hat{p}: \hat{p} \in S(T)\}$. 
\end{proof}

\begin{proof}[Proof of Theorem \ref{main_th}(existence)]
First, we put  
$$ S_* = \{ z \in S(T) :   |z|_{W^{1,2}(0,T; H)} + |z|_{L^{\infty}(0,T; H^1(0,1))}  +  |z|_{L^{2}(0,T; H^2(0,1))}  \leq M^* \}$$
 for the constant  $M*$ obtained in Lemma \ref{lem7-4}. 
It is obvious that  $S_*$ is  non-empty convex, $\Gamma$ is the mapping from $S_*$ into itself, and closed in the following sense: If $z_n \in S_*$ for $n = 1, 2, \cdots$ and $z_n \to z$ weakly in $W^{1,2}(0,T; H)$ and $L^2(0, T; H^2(0,1))$, and weakly* in $L^{\infty}(0,T; H^1(0,1))$, then $z \in S_*$.  
Also, $S_*$ is compact in the same (weak) topology of $W^{1,2}(0,T; H) \cap L^{\infty}(0,T; H^1(0,1)) \cap L^2(0,T; H^2(0,1))$. 
Thus, in order to apply Schauder's fixed point theorem to the mapping $\Gamma$ it is sufficient to show that
$\hat{p}_n \in S(T)$ for $n$,  $\hat{p} \in S(T)$ and $\hat{p}_n \to \hat{p}$ weakly in $W^{1,2}(0,T; H)$ and $L^2(0, T; H^2(0,1))$, and weakly* in $L^{\infty}(0,T; H^1(0,1))$,  then $\Gamma \hat{p}_n \to \Gamma \hat{p}$ in the same topology. 

Let $\hat{p}_n \in S(T)$ for $n$,  $\hat{p} \in S(T)$ and $\hat{p}_n \to \hat{p}$ weakly in $W^{1,2}(0,T; H)$  and $L^2(0, T; H^2(0,1))$,  and weakly* in $L^{\infty}(0,T; H^1(0,1))$.  Clearly, $\{ \hat{p}_n\}$ is bounded in $W^{1,2}(0,T; H)$,  $L^2(0, T; H^2(0,1))$ and $L^{\infty}(0,T; H^1(0,1))$. 
Hence, by applying Aubin's compact theorem we can take a subsequence $\{n_j\}$ such that 
$\hat{p}_{n_j} \to \hat{p}$ in $L^2(0, T; H^1(0,1))$ as $j \to \infty$. Accordingly, by Lemma \ref{lem7-3} we see that 
$\Gamma^{(2)} \hat{p}_{n_j} \to \Gamma^{(2)} \hat{p}$ in $W^{2,2}(0,T)$, $\{\Gamma^{(3)} \hat{p}_{n_j} \}$ is bounded  in $L^{\infty}(0, T; H^1(0,1))$ and  
$\Gamma^{(3)} \hat{p}_{n_j} \to \Gamma^{(3)} \hat{p} $ in $L^{2}(0, T; H^1(0,1))$ as $j \to \infty$. Also, Lemma \ref{lem7-4} guarantees the boundedness
 of $\{\Gamma \hat{p}_{n_j}\}$ is $S_2(T)$. By applying Aubin's compact theorem, again, we take subsequence  $\{ \hat{p}_l \} \subset  \{ \hat{p}_{n_j} \} $ and 
 $\hat{p}_* \in S_2(T)$ such that 
$\Gamma \bar{p}_{l} \to \Gamma \bar{p}_*$ weakly in $W^{1,2}(0,T; H)$  and $L^2(0,T; H^2(0,1))$, weakly* in  $L^{\infty}(0,T; H^1(0,1))$, and
in  $L^2(0,T; H^1(0,1))$ and in $C(\overline{Q(T)})$, 
$\rho(\bar{p}_{l}) \to \rho(\bar{p}_*)$ in $L^{2}(0,T; H^1(0,1))$  and weakly in $W^{1,2}(0,T; H)$  as $l \to \infty$, where 
$\bar{p}_l = \Gamma \hat{p}_{l} $ for $l$ and $\bar{p}_*= \Gamma \hat{p}_{*}$.  
These convergences guarantee that $\bar{p}_*$ is a solution of ($\overline{\mbox{P2}}$)($\bar{v}, s, h_0, p_0)$) on $[0,T]$, where $\bar{v} = \Gamma^{(3)} \hat{p}$ and $s = \Gamma^{(2)} \hat{p}$. 
Since the uniqueness of solutions of  ($\overline{\mbox{P2}}$)($\bar{v}, s, h_0, p_0)$ holds as mentioned in Section \ref{unique}, we obtain $\hat{p} = \hat{p}_*$, This means the continuity of the mapping $\Gamma$ in the required topology. 

\color{black}

Consequently, relying on Schauder's fixed point theorem, we conclude that (P) has a solution.   
\end{proof} 

\section{Conclusion and Outlook}

In this article,  we proposed a macro-micro (two-scale) mathematical model for describing the macroscopic swelling of a rubber foam caused by the microscopic absorption of some liquid. In a first step for simplification, we assume that the occupying domain is one-dimensional. The swelling of the domain is modeled based on the standard beam equation modified by an additional part related to the liquid pressure. 

An important feature of our approach is the macro-micro structure (or dual-porosity) of the rubber foam such that the absorption takes place on a lower length scale compared to the length scale where the mechanical deformation take place (these two length scales are inherently present in this structured material). The liquid's absorption and transport inside the material was modeled by means of a nonlinear parabolic equation derived from a Darcy's law posed in a non-cylindrical domain defined inn terms of  a macroscopic deformation (solution of the beam equation). Assuming suitable conditions, we proved the existence and uniqueness of a strong solutions to our evolution system coupling the nonlinear parabolic equation posed on the microscopic non-cylindrical domain with the beam equation posed on the macroscopic cylindrical domain.  To ensure the regularity of the non-cylindrical domain, we imposed a singularity to the elastic response function in the stress structure  appearing in the beam equation. 

The results presented here are preliminary in the sense that for more research can in principle be done either what concerns the multidimensional case, or when different rheological models describe the mechanical behavior of the foam (e.g. hyperelasticity \cite{Gurtin}, {see also \cite{Ricker}}). An important question in this context is: What is the right model to use to describe the foam swelling induced by microscopic liquid absorption? Following the guidelines of the current mathematical results, a concrete swelling experiment will be designed and numerical investigations of our macro-micro model  (P) will follow. It is of special interest to see whether the current model (or eventual variations on the same theme) can replicate the swelling behaviour of foamed rubber.

\section*{Acknowledgments} T. A. and A.M. thank the Knowledge Foundation (project nr. KK 2019-0213) for supporting financially this research.
This work is partially supported also by JSPS KAKENHI Grant Number JP19K03572. {The authors like to thank O. Gehrmamm (DIK) for his support with respect to the figures.}

\bibliographystyle{plain}


\end{document}